\newtheorem*{rep@theorem}{\rep@title}
\newcommand{\newreptheorem}[2]{%
\newenvironment{rep#1}[1]{%
 \def\rep@title{#2 \ref{##1}}%
 \begin{rep@theorem}}%
 {\end{rep@theorem}}}
\newtheorem{thm}{Theorem}[section]
\newtheorem{prop}[thm]{Proposition}
\newtheorem*{thm:main}{Theorem \ref{main theorem}}
\newtheorem*{thm:main2_gen3}{Theorem \ref{cycliclist}}
\newtheorem*{thm:main1_gen2}{Theorem \ref{thm: non-normallist}}
\newtheorem*{thm:main2_gen2}{Theorem \ref{thm: cycliclist}}
\newtheorem{cor}[thm]{Corollary}
\newtheorem{remk}[thm]{Remark}
\newtheorem{example}[thm]{Example}
\newtheorem{Algorithm}[thm]{Algorithm}
\numberwithin{equation}{section}
\theoremstyle{definition}
\newtheorem{lem}[thm]{Lemma}
\newcommand{\RR}{\mathbb{R}}      
\newcommand{\ZZ}{\mathbb{Z}}   
\newcommand{\CC}{\mathbb{C}}      
\newcommand{\QQ}{\mathbb{Q}}      
\newcommand{\ord}{\text{\normalfont{ord}}}
\newcommand{\Homo}{\text{\normalfont{Hom}}}
\newcommand{\OO}{\mathcal{O}}
\newcommand{\End}{\text{\normalfont{End}}}
\newcommand{\Pic}{\text{\normalfont{Pic}}}
\newcommand{\Gal}{\text{\normalfont{Gal}}}
\newcommand{\N}{\text{\normalfont{N}}}
\newcommand{\id}{\text{\normalfont{id}}}
\renewcommand{\mod}{\text{\normalfont{mod}}}
\newcommand{\bb}{\mathfrak{b}}
\newcommand{\aaa}{\mathfrak{a}}
\newcommand{\p}{\mathfrak{p}}
\newcommand{\lp}{\mathfrak{l}}
\newcommand{\q}{\mathfrak{q}}
\newcommand{\Cl}{\text{\normalfont{Cl}}}
\newcommand{\im}{\text{\normalfont{Im}}}
\newcommand{\Tr}{\text{\normalfont{Tr}}}
\newcommand{\I}{\text{\normalfont{I}}}
\newcommand{\D}{\text{\normalfont{D}}}
\newcommand{\Tgt}{\text{\normalfont{Tgt}}}
\newcommand{\thickhline}{%
    \noalign {\ifnum 0=`}\fi \hrule height 1pt
    \futurelet \reserved@a \@xhline
}
\title{The CM class number one problem for curves of genus $2$}
\author{P{\i}nar K{\i}l{\i}\c{c}er,
Marco Streng
}
\date{\today}
\newcommand{\Addresses}{{
  \bigskip
  \footnotesize

 P{\i}nar K{\i}l{\i}\c{c}er, \textsc{Carl von Ossietzky Universität Oldenburg 
Institut für Mathematik 
26111 Oldenburg, Germany}\par\nopagebreak
  \textit{E-mail address}, P{\i}nar K{\i}l{\i}\c{c}er: \texttt{pinar.kilicer@uni-oldenburg.de}

  \medskip

 Marco Streng, \textsc{Mathematisch Instituut
Universiteit Leiden
P.O. box 9512
2300 RA Leiden
The Netherlands}\par\nopagebreak \textit{E-mail address}, Marco Streng: \texttt{streng@math.leidenuniv.nl}
  
}}
\begin{document}

\begin{abstract}The CM class number one problem for elliptic curves asked to find all elliptic curves defined over the rationals with non-trivial endomorphism ring. For genus-2 curves it is the problem of determining all CM curves of genus $2$ defined over the \textit{reflex field}. We solve the problem by showing that the list given in Bouyer--Streng \cite[Tables 1a, 1b, 2b, and 2c]{bouyer} is complete.
\end{abstract} 
\maketitle

\section{Introduction}

By a \textit{curve}, we always mean a smooth projective geometrically irreducible algebraic curve
over a field~$k$.
A \emph{CM field} is a totally imaginary quadratic extension of a totally real number field.
We say that a curve $C$ of genus $g$ has \textit{complex multiplication} (CM) if there
exists an embedding $\theta$ of a CM field $K$ of degree $2g$ into
the endomorphism algebra $\mathrm{End}(J(C)_{\overline{k}})\otimes \QQ$
of the Jacobian over the algebraic closure.
We then also say that $C$ has \emph{CM by the order} $\OO= \theta^{-1}(\mathrm{End}(J(C)_{\overline{k}}))$.
For example, an elliptic curve~$E/\overline{\QQ}$ has CM if
$\End(E)$ is an order in an imaginary quadratic field~$K$. 
An elliptic curve $E/\overline{\QQ}$ with CM by a maximal order $\OO_{K}$ can be defined over $\QQ$ if and
only if the class group $\Cl_{K}:=I_{K}/P_{K}$ is trivial. 
The CM class number one problem for elliptic curves asks to determine all imaginary quadratic fields of class number one. 
This problem was solved by Heegner~\cite{heegner} (1952), Baker \cite{baker} (1966) and Stark \cite{stark} (1967); the fields are $K\cong\QQ(\sqrt{-d})$ where $d\in \{1,\ 2,\ 3,\ 7,\ 11,\ 19,\ 43,\ 67,\ 163\}$.

We consider the analogous problem for curves of genus $2$.  
Murabayashi and Umegaki~\cite{umeg} listed all quartic CM fields $K$
for which there exist
curves of genus $2$ over $\QQ$ with CM by~$\OO_K$. This list  contains only cyclic
quartic CM fields, and no dihedral quartic CM fields because curves
cannot be defined over~$\QQ$ in the dihedral case. 
The smallest examples in the generic dihedral case are the ones defined over the reflex
field, or equivalently those whose \textit{CM class group} $I_{K^r}/I_0(\Phi^r)$
is trivial. We call the order of this group the \textit{CM class number}.
We give the complete list of CM class number one non-biquadratic quartic  fields,
thereby solving the \emph{CM class number one problem for curves of genus~$2$}
and showing that the list in Bouyer--Streng \cite{bouyer} is complete. 

\begin{thm}\label{thm: non-normallist} There exist exactly $63$ isomorphism classes of  non-normal quartic CM fields with CM class number one. The fields are listed in Theorem \ref{thm: non-normallist2}.
\end{thm}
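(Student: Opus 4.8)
The plan is to establish an explicit upper bound on $|d_K|$ and then to confirm by a finite computation that below that bound the non-normal quartic CM fields with CM class number one are exactly the $63$ listed in \cite{bouyer}. The bound is obtained by playing a class-field-theoretic lower estimate for the CM class number off against an \emph{effective} analytic lower bound for the arithmetic invariants that control it.

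The first step is to extract from CM class number one that certain class numbers are small. From the description of $I_{K^r}/I_0(\Phi^r)$ in Theorem~\ref{main theorem} one checks that $I_0(\Phi^r)$ contains every principal ideal of $\OO_{K^r}$ (for $\aaa=(\beta)$ the element $N_{\Phi^r}(\beta)$ is an admissible generator) as well as every ideal extended from $\OO_{K_0^r}$; hence the CM class number divides $h_{K^r}$, and $I_0(\Phi^r)/P_{K^r}$ already contains the image of $\Cl_{K_0^r}$ under extension of ideals. Using the reflex identity $N_{\Phi^r}(N_\Phi(\aaa))=\aaa\overline{\aaa}$, valid for primitive CM types of quartic fields — so that triviality of the reflex type norm on $\Cl_{K^r}$ forces complex conjugation to act as $-1$ there, and symmetrically on $\Cl_K$ — together with a count of the unit contribution through the Hasse index $Q_{K^r}$ and of the obstruction to choosing a type-norm generator $\mu$ with $\mu\overline{\mu}$ equal to the absolute norm, I would show that CM class number one forces $h_{K_0^r}$ to be a small explicit power of $2$ and the relative class number $h^-_{K^r}=h_{K^r}/(Q_{K^r}h_{K_0^r})$ (hence also, by the reflex symmetry, $h^-_K$) to be bounded by a small explicit constant.

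Next I would combine $h^-_{K^r}\le\text{const}$ with effective lower bounds for relative class numbers of quartic CM fields in the style of Louboutin — of the shape $h^-_{K^r}\gg|d_{K^r}|^{1/4}/\log|d_{K^r}|$ with explicit constants, coming from the relative class number formula together with an explicit lower estimate for the relevant Hecke $L$-value at $s=1$ — to obtain an explicit bound $|d_{K^r}|<B$. Since the reflex construction is, up to isomorphism, an involution on non-normal quartic CM fields, every field with CM class number one is the reflex of a non-normal quartic CM field whose discriminant is below $B$. The remaining task is finite: for each real quadratic field $K_0$ with $d_{K_0}^2\le B$ and each totally imaginary quadratic extension $M/K_0$ with $|d_M|<B$, keep those $M$ that are non-normal, compute the reflex field $M^r$, the reflex type norm, the class group of $\OO_M$ and the Hasse unit index, and read off the CM class number $|I_M/I_0(\Phi^r)|$ attached to each pair $(M^r,\Phi)$; the fields $M^r$ for which this equals $1$ should be precisely the $63$ in \cite[Tables~1a, 1b, 2b, 2c]{bouyer}, which are reproduced in Theorem~\ref{thm: non-normallist2}.

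The main obstacle, as I see it, lies in the last two steps jointly: obtaining a value of $B$ that is unconditional (not merely conditional on GRH) and at the same time small enough for the exhaustive search to be feasible, which forces the analytic estimates close to their optimum and requires the subsequent class-group computations over a large range of quartic fields to be organised so that they can be carried out in practice and, ideally, checked independently. By comparison the bookkeeping of the first step — tracking the powers of $2$ coming from ambiguous classes, from the Hasse unit index, and from the norm condition on the type-norm generator — is delicate but routine once the reflex identities are available.
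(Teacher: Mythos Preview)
Your overall plan --- pair an algebraic consequence of $I_0(\Phi^r)=I_{K^r}$ with an effective analytic lower bound for the relative class number, then enumerate --- matches the paper's. The gap is in your first step. You assert that CM class number one forces $h^-_{K^r}$ to be bounded by a small explicit constant, but this is not what one obtains. What the paper actually proves (Proposition~\ref{prop2}, via Lemmas~\ref{lem1} and~\ref{lemext}) is the \emph{formula} $h^*_{K^r}=2^{t_{K^r}-1}$, where $t_{K^r}$ is the number of primes of $F^r$ ramified in $K^r$; a priori this can be large. Balancing $2^{t-1}$ against Louboutin's bound $h^*_{K^r}\gg\sqrt{d_{K^r}/d_{F^r}}/\log^2(d_{K^r}/d_{F^r})$ yields only $t\le 14$ and $d_{K^r}/d_{F^r}\le 2\cdot 10^{15}$ (Proposition~\ref{effectiveness}), which the paper explicitly declares too large for a direct search. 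Your claim that $h_{K_0^r}$ is a small power of $2$ is likewise unsupported at this stage; nothing in the type-norm conditions bounds $h_{F^r}$.

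The missing ingredient, which occupies most of Section~\ref{ramification}, is a detailed analysis of how primes ramify in the $D_4$-tower $N/\QQ$. Using Lemma~\ref{ram2} (a prime of $K^r$ of prime norm fixed by complex conjugation forces $F=\QQ(\sqrt{p})$) together with the equality $t_K=t_{K^r}$ coming from $h^*_K=h^*_{K^r}$ (Proposition~\ref{rel.cl.eq.}), the paper shows that $F=\QQ(\sqrt{p})$ and $F^r=\QQ(\sqrt{q})$ for primes $p,q$, and that all but at most two of the primes ramified in $K^r/F^r$ are \emph{inert} in $F^r$ (Proposition~\ref{Fr}). Hence $d_{K^r}/d_{F^r}$ is at least $pq$ times the \emph{square} of the product of the remaining ramified primes, and the balance against $2^{t-1}$ now gives $t\le 6$ and $d_{K^r}/d_{F^r}\le 3\cdot 10^{10}$ (Proposition~\ref{t<6}). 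Equally important, the ramification analysis yields an explicit parametrisation $K^r\cong F^r(\sqrt{-\pi s_1\cdots s_u})$ by short tuples of primes with prescribed Legendre-symbol constraints (Lemma~\ref{construct.1}), so one enumerates tuples rather than all quartic fields of bounded discriminant. This structural input is not the ``delicate but routine bookkeeping'' you anticipate; it is the heart of the argument, and without it the exhaustive search you propose is not feasible.
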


\begin{thm}\label{thm: cycliclist}  There exist exactly $20$ isomorphism classes of cyclic quartic CM fields with CM class number one. The fields are listed in Theorem \ref{thm: cycliclist2}.
\end{thm}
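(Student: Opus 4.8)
The plan is to turn the hypothesis ``CM class number one'' into an explicit upper bound for the conductor of $K$, and then to finish by a finite verification matching the fields in \cite[Tables 2b and 2c]{bouyer}. Throughout, $K$ is a cyclic quartic CM field, $\sigma$ generates $\Gal(K/\QQ)\cong\ZZ/4\ZZ$, complex conjugation is $\sigma^2$, and $K_0$ is the real quadratic subfield fixed by $\sigma^2$; since $K$ is Galois the reflex field $K^r$ is isomorphic to $K$ and the reflex type $\Phi^r$ corresponds to a type-norm map of the form $\aaa\mapsto\aaa\,\sigma^{\pm1}(\aaa)$ on ideals. The first step is to extract from Theorem \ref{main theorem} an exact sequence relating the CM class group $I_{K^r}/I_0(\Phi^r)$, the class group $\Cl_K$, the image of $\Cl_{K_0}$ in $\Cl_K$, and indices coming from units and from the norm condition $\mu\bar\mu=\N_{K/\QQ}(\aaa)$; I would use this to show that $h(\Phi)=1$ forces the relative class number $h_K^\ast:=h_K/h_{K_0}$ to divide an explicit small power of $2$.

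The second step is analytic. Writing $\zeta_K/\zeta_{K_0}=L(s,\psi)L(s,\overline\psi)$ for the two order-$4$ Dirichlet characters $\psi,\overline\psi$ cutting out $K$, the analytic class number formula gives $h_K^\ast$ as an explicit constant (involving the Hasse unit index $Q\in\{1,2\}$, the number of roots of unity $w_K$, and $\sqrt{d_K/d_{K_0}}=\mathfrak f_\psi$) times $|L(1,\psi)|^2$. Since $\psi$ is a \emph{complex} character there is no Siegel zero to contend with, so one has an \emph{effective} lower bound of the shape $|L(1,\psi)|\gg 1/\log\mathfrak f_\psi$; combined with Step~1 this yields an explicit bound $\mathfrak f_\psi\le B$. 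The structural point that makes the cyclic case close up is that $\psi^2$ is a power of $\psi$, so its conductor divides $\mathfrak f_\psi$; hence $d_{K_0}=\mathfrak f_{\psi^2}\mid\mathfrak f_\psi$, and by the conductor--discriminant formula $d_K=d_{K_0}\,\mathfrak f_\psi^2\le\mathfrak f_\psi^3\le B^3$. Thus the hypothesis bounds the discriminant of $K$ outright.

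The final step is the enumeration. Cyclic quartic CM fields are classified by their conductor together with a choice of order-$4$ Dirichlet character, so listing all of them with $\mathfrak f_\psi\le B$ is a finite, explicit task; for each such $K$ one computes $h_K$, $h_{K_0}$ and the order of $I_{K^r}/I_0(\Phi^r)$ directly inside $\Cl_K$ using the description in Theorem \ref{main theorem}, and checks that $h(\Phi)=1$ holds for precisely the $20$ fields appearing in Theorem \ref{thm: cycliclist2}, namely those of \cite[Tables 2b and 2c]{bouyer}.

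The step I expect to be the main obstacle is making $B$ small enough for the enumeration to be feasible: the cheap effective lower bound for $L(1,\psi)$ carries a poor implied constant, so one must run the analytic estimate with care --- sharpening along the lines of Louboutin-type lower bounds for relative class numbers of CM fields, and exploiting the arithmetic of cyclic quartic conductors --- to bring $B$ into a range where the finitely many candidate fields can actually be listed and tested by machine.
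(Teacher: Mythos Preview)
Your approach is essentially the paper's, but Step~1 as stated overstates what the algebra alone delivers. The algebraic input (Proposition~\ref{prop2}, which in the cyclic case is Murabayashi \cite[Proposition~4.5]{murab}) gives only $h_K^\ast=2^{t_K-1}$, where $t_K$ is the number of primes of $K_0$ ramified in $K/K_0$; it does \emph{not} bound $t_K$, so by itself it cannot force $h_K^\ast$ to divide a fixed small power of~$2$. The argument closes instead by a three-way comparison: (i) $h_K^\ast=2^{t_K-1}$ algebraically; (ii) the ramified primes of $K/K_0$ lie over $t_K$ \emph{distinct} rational primes (this uses Murabayashi's ramification lemma, Lemma~\ref{ramification1}: exactly one rational prime is totally ramified in $K/\QQ$ and all other ramified primes are inert in $K_0$), whence the conductor satisfies $f_K\ge\Delta_{t_K}:=\prod_{j\le t_K}p_j$; (iii) Louboutin's effective lower bound (Theorem~\ref{lower1}) gives $h_K^\ast\gg f_K/(\log f_K)^2$. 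Feeding (ii) into (iii) and comparing with (i) bounds $t_K$ (the paper obtains $t_K\le 6$); only \emph{then} does one get $h_K^\ast\le 2^5$ and the explicit conductor bound $f_K<2.1\cdot 10^5$ of Proposition~\ref{lemrel}. So the logical order is not ``first bound $h_K^\ast$, then bound $f_K$'' but rather a single simultaneous inequality in $t_K$.

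Apart from this ordering issue your plan matches the paper closely. Your observation $d_{K_0}\mid \mathfrak f_\psi$ is exactly the conductor--discriminant relation $d_K=f_K^2\,d_{K_0}$ used in Proposition~\ref{lemrel}, and your enumeration via order-$4$ Dirichlet characters of bounded conductor is equivalent to the paper's explicit parametrisation $K=\QQ(\sqrt{-\epsilon\, s_1\cdots s_u\sqrt p\,})$ of Lemma~\ref{construct.2} followed by Algorithm~\ref{algo1}. The ``sharpening'' you anticipate needing in your last paragraph is precisely what the exact formula $h_K^\ast=2^{t_K-1}$ together with Lemma~\ref{ramification1} provides: it replaces a generic effective bound by one in which the growth of $\Delta_{t_K}$ against $2^{t_K-1}$ does the work.
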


\begin{cor} \label{cor: curves}
There are exactly $125$ curves of genus $2$, up to isomorphism over $\overline{\QQ}$, defined over the reflex field with CM by $\OO_K$ for some non-biquadratic quartic  CM field $K$. The fields are the fields in Theorems \ref{thm: non-normallist} and \ref{thm: cycliclist}, and the curves are those of Bouyer--Streng \cite[Tables 1a, 1b, 2b, and 2c]{bouyer}.
\end{cor}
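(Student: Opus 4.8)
The plan is to deduce the Corollary by combining the classification of fields in Theorems~\ref{thm: non-normallist} and~\ref{thm: cycliclist} with the characterisation of the reflex field as a field of definition (Theorem~\ref{main theorem}) and the explicit curve tables of Bouyer--Streng~\cite{bouyer}.

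First I would translate the statement about curves into one about principally polarized abelian surfaces. If $C$ is a genus-$2$ curve over a field $k\supseteq\QQ$ with CM by $\OO_K$, then its Jacobian $J$ is a principally polarized abelian surface over $k$ with CM by $\OO_K$; conversely, by the Torelli theorem a principally polarized abelian surface over $\overline{k}$ is either the Jacobian of a genus-$2$ curve or a product of two elliptic curves with the product polarization. When $K$ is quartic and non-biquadratic it contains no imaginary quadratic subfield, hence every CM type of $K$ is primitive and every abelian surface with CM by $\OO_K$ is simple; this excludes the product case. Thus $C\mapsto (J,\lambda)$ induces a bijection between $\overline{\QQ}$-isomorphism classes of genus-$2$ curves with CM by $\OO_K$ and $\overline{\QQ}$-isomorphism classes of principally polarized abelian surfaces with CM by $\OO_K$; moreover, since in genus $2$ the Torelli map and the hyperelliptic involution are intrinsic, and by the descent results underlying~\cite{bouyer}, the curve $C$ has a model over a subfield of $\overline{\QQ}$ exactly when $(J,\lambda)$ does.

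Next I would apply Theorem~\ref{main theorem}: a principally polarized abelian surface with CM by $\OO_K$ has a model over the reflex field $K^r$ if and only if the CM class group $I_{K^r}/I_0(\Phi^r)$ is trivial, i.e.\ $K$ has CM class number one. Together with Theorems~\ref{thm: non-normallist} and~\ref{thm: cycliclist} this shows the fields in question are exactly the $63+20=83$ CM fields listed in Theorems~\ref{thm: non-normallist2} and~\ref{thm: cycliclist2}. For each such $K$, the $\overline{\QQ}$-isomorphism classes of principally polarized abelian surfaces with CM by $\OO_K$ form a finite set, controlled by the equivalence classes of CM types of $K$, the ideal class group $\Cl_K$, the reflex norm, and the unit group $\OO_K^\times$; its cardinality is computed field-by-field in~\cite{bouyer}, and by the bijection of the previous step it equals the number of genus-$2$ curves over $K^r$ with CM by $\OO_K$ (all of which are then actually defined over $K^r$). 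Summing the entries of \cite[Tables 1a, 1b, 2b, and 2c]{bouyer} over the $83$ fields yields $125$ curves.

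The hard part is not this bookkeeping but the two inputs being invoked. First, that triviality of the CM class group, which a priori only identifies the field of moduli of $(J,\lambda)$ with $K^r$, in fact produces a model of the \emph{curve} $C$ over $K^r$ --- for curves of genus $\ge 2$ the field of moduli need not be a field of definition in general, so one genuinely uses the complex-multiplication arguments behind Theorem~\ref{main theorem} and the constructions of~\cite{bouyer}. Second, that the per-field enumeration of isomorphism classes in~\cite{bouyer} is complete and correct. Once these are granted, the Corollary is exactly the statement that the $63$ non-normal and $20$ cyclic quartic CM fields of CM class number one carry, in total, the $125$ curves tabulated in~\cite{bouyer}.
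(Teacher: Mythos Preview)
Your argument is correct and follows the same approach as the paper: the corollary is obtained by combining the field classification of Theorems~\ref{thm: non-normallist} and~\ref{thm: cycliclist} with the curve enumeration in Bouyer--Streng~\cite{bouyer}. The paper's own proof is a single sentence citing exactly these two inputs, whereas you have unpacked the underlying Torelli and field-of-moduli-versus-field-of-definition considerations that the paper leaves implicit in the reference to~\cite{bouyer}.
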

\begin{proof} Theorems \ref{thm: non-normallist} and \ref{thm: cycliclist} give exactly the fields
	and the main result of Bouyer--Streng~\cite{bouyer} is the complete list of curves for those fields.
\end{proof}

\begin{cor}
There are exactly $21$ absolutely simple CM curves of genus $2$ defined over~$\QQ$, up to isomorphism over $\overline{\QQ}$. The fields and $19$ of the curves are given in van Wamelen \cite{wame}. The other two curves are $y^2 = x^6-4x^5+10x^3-6x-1$ and $y^2 = 4x^5+40x^4-40x^3+20x^2+20x+3$ given in Theorem 14 of Bisson--Streng \cite{bisson}.
\end{cor}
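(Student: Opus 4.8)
The plan is to combine the classification of CM class number one fields in Theorems~\ref{thm: non-normallist}--\ref{thm: cycliclist} with the explicit curve tables of van Wamelen~\cite{wame} and Bisson--Streng~\cite{bisson}: the references supply the curves, and the theorems guarantee that the search behind those references misses nothing. For the existence half, the $19$ curves of~\cite{wame} and the two curves $y^2 = x^6-4x^5+10x^3-6x-1$ and $y^2 = 4x^5+40x^4-40x^3+20x^2+20x+3$ of~\cite[Theorem~14]{bisson} are exhibited there over $\QQ$, with absolutely simple Jacobian having CM; only the exhaustiveness of the list needs an argument.

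So let $C/\QQ$ be absolutely simple of genus~$2$ with CM by an order $\OO$ in a quartic CM field $K$. Absolute simplicity of $\mathrm{Jac}(C)$ means that the associated CM type is primitive, which for a quartic field forces $K$ to have no imaginary quadratic subfield; hence $K$ is non-biquadratic, i.e.\ either non-normal or cyclic over $\QQ$. Since $C$, and therefore the principally polarised surface $(\mathrm{Jac}(C),\lambda)$, is defined over $\QQ$, the theory recalled in Theorem~\ref{main theorem}, applied with $\OO$ in place of $\OO_K$, shows that the field of moduli of the polarised CM surface is an abelian extension of the reflex field $K^r$ of degree equal to the CM class number of $\OO$; a model over $\QQ$ forces that number to be $1$. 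Because the condition defining $I_0(\Phi^r)$ relative to $\OO$ is more restrictive than the one relative to $\OO_K$, there is a natural surjection $I_{K^r}/I_0(\Phi^r,\OO)\twoheadrightarrow I_{K^r}/I_0(\Phi^r,\OO_K)$, so the CM class number of $\OO_K$ divides that of $\OO$ and is therefore also $1$; hence $K$ occurs in Theorem~\ref{thm: non-normallist} or~\ref{thm: cycliclist}.

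Next I would discard the non-normal case: by Shimura~\cite[Proposition~5.17]{goro1} a genus-$2$ curve with CM by a non-normal quartic CM field has field of moduli strictly larger than $\QQ$, so it is not defined over $\QQ$; this leaves the $20$ cyclic fields of Theorem~\ref{thm: cycliclist}. For each such $K$ one then enumerates: first, the finitely many orders $\OO\subseteq\OO_K$ of CM class number one (a finite search, since the conductor formula makes the polarised class number of $\OO$ grow with the conductor); second, the CM types of $K$ up to equivalence; third, for each triple $(K,\OO,\Phi)$, the finitely many principally polarised abelian surfaces over $\QQ$ carrying that CM structure, produced via the Igusa class polynomials and the CM method. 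Identifying $\overline{\QQ}$-isomorphic models (quadratic twists) and keeping the Jacobians --- all absolutely simple by construction --- yields exactly the $19$ curves of van Wamelen together with the two curves of~\cite[Theorem~14]{bisson}, for a total of $21$; the extra two arise from a non-maximal order of CM class number one not covered by van Wamelen's list.

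The main obstacle, beyond importing the field classification, is precisely the exhaustiveness of this last enumeration: one must be certain that for each of the $20$ cyclic fields \emph{every} order of CM class number one and \emph{every} CM type has been treated, and that the descent from $K^r\cong K$ down to $\QQ$ has been correctly settled in each instance, twists included. This bookkeeping is exactly what is carried out in~\cite{wame} and~\cite{bisson}; what the present paper contributes is the guarantee that the list of $20$ cyclic fields feeding those computations is complete, so that their curve lists together give precisely $21$ curves.
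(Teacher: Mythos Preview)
Your argument is correct and follows essentially the same route as the paper: show that an absolutely simple CM curve over $\QQ$ must have cyclic $K$ of CM class number one (you via Shimura~\cite[Proposition~5.17]{goro1} and the surjection $I_{K^r}/I_0(\Phi^r,\OO)\twoheadrightarrow I_{K^r}/I_0(\Phi^r,\OO_K)$; the paper more tersely by citing \cite{umeg} for the maximal-order case and invoking Theorem~\ref{thm: cycliclist}), then defer the exhaustive enumeration within those $20$ fields to \cite{wame} and \cite{bisson}. The one point to tighten is that Theorem~\ref{main theorem} as stated in the paper applies only to abelian varieties with CM by the \emph{maximal} order, so the version ``with $\OO$ in place of $\OO_K$'' that you invoke is a genuine extension and should be cited separately (e.g.\ from Shimura's book or from \cite{bisson} itself) rather than presented as a direct application of Theorem~\ref{main theorem}.
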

\begin{proof}
	For such a curve $C$, let $\OO = \mathrm{End}(J(C)_{\overline{\QQ}})$
	and $K = \OO\otimes \QQ$.
	Then Proposition~5.17 in Shimura~\cite{shimura1} shows that $K/\QQ$ is Galois,
	and hence Theorem~\ref{thm: cycliclist} gives all possibilities for~$K$.
	
	Bouyer-Streng~\cite{bouyer} prove that the 19 curves in \cite{wame} are
	exactly the curves with $\OO=\OO_K$ for these fields~$K$
	and Bisson-Streng~\cite{bisson} prove that the two curves
	in the statement are exactly the curves with $\OO\subsetneq\OO_K$
	for these fields~$K$.
\end{proof}

\begin{cor}
There are only finitely many simple CM curves of genus $2$ defined over the reflex field. The corresponding CM fields are those of Theorems~{\ref{thm: non-normallist}--\ref{thm: cycliclist},} the complete list of orders can be computed using the methods of \cite{bisson} and the curves using the methods of \cite{bouyer}. \qed
\end{cor}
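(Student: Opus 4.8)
The plan is to deduce this from Theorem~\ref{main theorem} together with the order-theoretic results of Bisson--Streng~\cite{bisson}. Let $C$ be a curve of genus $2$, defined over the reflex field $K^r$, whose Jacobian over $\overline{\QQ}$ has endomorphism ring an order $\OO$ in a quartic CM field $K$ with primitive CM type $\Phi$. The first observation is that $K$ is automatically non-biquadratic: a $2$-dimensional abelian variety whose endomorphism algebra is a quartic CM field is simple, hence has primitive CM type, whereas a biquadratic quartic CM field admits no primitive CM type; thus $K$ is cyclic or non-normal. Next I would use the extension of the CM class group to the order $\OO$ (see~\cite{bisson}): there is a finite abelian group $\mathfrak{C}(\OO,\Phi)$, the \emph{polarised class group} of $\OO$, which parametrises the isomorphism classes over $\overline{\QQ}$ of principally polarised abelian surfaces with CM by $\OO$ of type $\Phi$; in particular there are only finitely many of the latter. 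Since $C$ is defined over $K^r$, the group $\mathfrak{C}(\OO,\Phi)$ is trivial (the case $\OO=\OO_K$ being the CM class number one statement of Theorem~\ref{main theorem}). As the inclusion $\OO\hookrightarrow\OO_K$ induces a surjection $\mathfrak{C}(\OO,\Phi)\twoheadrightarrow\mathfrak{C}(\OO_K,\Phi)$, triviality of $\mathfrak{C}(\OO,\Phi)$ forces $\OO_K$ to have CM class number one, so by Theorems~\ref{thm: non-normallist} and~\ref{thm: cycliclist} the field $K$ is one of the $83$ fields listed there.

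I would then invoke the finiteness and effectivity from~\cite{bisson}: for each of these finitely many fields the index $[\OO_K:\OO]$ of an order $\OO$ with $\mathfrak{C}(\OO,\Phi)$ trivial is effectively bounded, so there are only finitely many such orders and they can be enumerated algorithmically. This gives a finite, explicitly computable set of triples $(K,\OO,\Phi)$. For each of them there are, as noted, only finitely many principally polarised abelian surfaces over $\overline{\QQ}$ with CM by $\OO$ of type $\Phi$, and by the Torelli theorem each such surface is either the Jacobian of a genus-$2$ curve, unique up to isomorphism, or a product of polarised elliptic curves, contributing no genus-$2$ curve. Hence only finitely many curves of genus~$2$ arise in total, and by the method of~\cite{bouyer} --- computing Igusa invariants at the relevant CM points --- each of them can be exhibited explicitly.

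The step I expect to be the main obstacle is the first paragraph: setting up the polarised class group of a non-maximal order correctly and proving that its triviality propagates from $\OO$ to $\OO_K$, so that the classification of Theorems~\ref{thm: non-normallist}--\ref{thm: cycliclist} applies; verifying that the bound on $[\OO_K:\OO]$ in~\cite{bisson} is genuinely effective is a subsidiary point. Once this bridge between arbitrary orders and maximal orders is in place, the remaining finiteness and computability claims follow formally from~\cite{bisson} and~\cite{bouyer}.
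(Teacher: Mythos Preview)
Your proposal is correct and matches the paper's intended approach; the paper itself offers no proof beyond the \qed, so you have supplied exactly the argument the authors leave implicit by pointing to Theorems~\ref{thm: non-normallist}--\ref{thm: cycliclist} together with \cite{bisson} and \cite{bouyer}. The reduction from $\OO$ to $\OO_K$ via surjectivity of $\mathfrak{C}(\OO,\Phi)\to\mathfrak{C}(\OO_K,\Phi)$ (equivalently, containment of the corresponding class fields over $K^r$) is the only substantive step not already in the paper, and it is the right one.
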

In Section \ref{section: CM}, we give some definitions and facts from CM theory, and state the CM class number one problem for curves of genus $g\leq 2$.
In Sections \ref{non-normal-partone}--\ref{sec:enumerating}, we prove Theorem~\ref{thm: non-normallist}.
The plan is as follows.
We first show that there are only finitely many non-biquadratic quartic  CM fields
with CM class number one by bounding their discriminant (Section~\ref{non-normal-partone}).
The bound will be too large for practical purposes,
but by using ramification theory and $L$-functions,
we improve the bound (Section~\ref{non-normal-parttwo}),
which we then use to enumerate the CM fields (Section~\ref{sec:enumerating}).
Section~\ref{cyclic} proves Theorem~\ref{thm: cycliclist}
using the same strategy as in Sections~\ref{non-normal-partone}--\ref{sec:enumerating}.

\subsection*{Acknowledgements}

 The authors would like to thank Maarten Derickx,
 Florian Hess, and Peter Stevenhagen for useful discussions.

\section{Complex Multiplication}\label{section: CM}
We refer to Shimura \cite{shimura3} and Lang \cite{lang} as references for this section.

Let $K$ be a CM field of degree $2g$ and $N'$ be a number field that contains a subfield isomorphic (over $\QQ$) to a normal closure of $K$. There exists an automorphism $\rho$ of~$K$ such that for every embedding $\tau: K \rightarrow \CC$ we have $\overline{\cdot}\circ \tau = \tau \circ \rho$; we call it \textit{complex conjugation} and denote it by $\rho$ or $\overline{\cdot}$. Let $\phi$ be an embedding of $K$ into any field $N$. 
Then we denote $\phi\circ\overline{\cdot}$ by $\overline{\phi}$. Note that if $N$ 
is a CM field or $\CC$ then we have $\overline{\cdot}\circ\phi=\overline{\phi}$.

Let $N'$ be a field that contains a  
subfield that is isomorphic over $\QQ$ to a normal closure over~$\QQ$ of $K$.  A \textit{CM type} \label{CMtype} of $K$ with values in $N'$ is a set $\Phi$ of 
embeddings $\phi: K\rightarrow N'$ such that exactly one embedding  of each of the  
$g$ complex conjugate pairs $\phi, \ \overline{\phi}: K \rightarrow N'$  is in~$\Phi$.  
We say that $(K,\Phi)$ is a \textit{CM pair} or \textit{CM type}. 

Let $K_1$ be a  proper CM subfield of $K$. Let $\Phi_{1}$ be a CM type of $K_1$ with values 
in $N'$. Then the CM type of $K$  \textit{induced} by 
$\Phi_{1}$ is $\{\phi\in\Homo(K, N'):\ \phi|_{K_1}\in\Phi_{1}\}.$
We say that a CM type $\Phi$ of a CM field is \textit{primitive} if it is not induced 
from a CM type of a proper CM subfield. 

Let $N$ be a normal closure of $K$. From now on, we identify $N'$ with $N$ by making a choice of isomorphism and replacing $N'$ by a subfield. The \textit{reflex field} of $(K,\ \Phi)$ is 
         \[K^{r}=\QQ(\{\sum_{\phi\in\Phi}\phi(x)\ |\ x\in K\}) \subset N \]
and satisfies $\Gal(N/K^r)=\{\sigma\in\Gal(N/\QQ):\sigma\Phi=\Phi\}$. 
For example, if a CM~field~$K$ is Galois over $\QQ$, then the reflex field $K^r$ of $K$ is a subfield of $K$. Let~$\Phi_N$ be the CM type of $N$ induced by $\Phi$. Then the reflex field $K^r$ is a CM field with CM type $\Phi^{r}=\{\sigma^{-1}|_{K^r}:\ \sigma\in\Phi_N\}.$ The pair $(K^r,\Phi^r)$ is called the \textit{reflex} of $(K,\Phi)$. The \textit{type norm} of $(K,\Phi)$ is the multiplicative map
\begin{align*}
\N_{\Phi}:K&\rightarrow K^r,\\
x&\mapsto\prod_{\phi\in\Phi}\phi(x),
\end{align*}
satisfying $\N_{\Phi}(x)\overline{\N_{\Phi}(x)} = \N_{K/\QQ}(x)\in\QQ$.  
The type norm induces a homomorphism between the groups of fractional ideals~$I_K$ and $I_{K^r}$ by sending $\bb\in I_K$ to $\bb'\in I_{K^r}$ such that $\bb'\OO_{N}=\prod\limits_{\phi\in\Phi}\phi(\bb)\OO_{N}$  (Shimura \cite[Proposition 29]{shimura3}). 

An abelian variety $A$ over a field $k$ of characteristic $0$ of dimension $g$
has CM by an order $\OO_K$ if $K$ has degree~$2g$ and there is an
embedding $\theta:K\rightarrow \End(A_{\overline{k}})\otimes\QQ$.
Let $\Tgt_{0}(A)$ be the tangent space of $A$ over $\overline{k}$ at $0$.
Given $A$ with CM by $\OO_K$, let~$\Phi$ be the set of homomorphisms
$K \rightarrow \overline{k}$ occurring in the representation of $K$ in 
$\End_{\overline{k}}(\Tgt_{0}(A_{\overline{k}}))$. Then $\Phi$ is a CM type of~$K$, 
and we say that $(A,\theta)$ is of type $(K,\Phi)$. A polarized abelian 
variety of type $(K,\Phi)$ is a triple $P=(A,\theta, \varphi)$ formed by 
an abelian variety $(A,\theta)$ of type $(K,\Phi)$ and a polarization 
$\varphi$ of~$A$ such that $\theta(K)$ is stable under the involution 
of $\End_{\overline{k}}(A)\otimes\QQ$ determined by $\varphi$. 
For more details see Shimura \cite[Chapter 14]{shimura3}.

We say that an abelian variety is \textit{simple} if it is not isogenous over $\overline{k}$ to product of abelian varieties of lower dimension.

\begin{thm} [Lang \cite{lang}, I.3.5]\label{simple}
An \textit{abelian variety of type} $(K,\Phi)$ is simple if and only if $\Phi$ is primitive. \qed
\end{thm}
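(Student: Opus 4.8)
The plan is to recover the classical argument of Shimura. We may assume $k=\CC$ (embed $\overline{k}$ in $\CC$; simplicity over $\overline{k}$ is preserved), and fix the analytic uniformization $A\cong\CC^{\Phi}/\Phi(\aaa)$ for a fractional $\OO_K$-ideal $\aaa$. Then $H_1(A,\QQ)$ is identified with $K$ as a $K$-module, and $H_1(A,\CC)=K\otimes_{\QQ}\CC=\bigoplus_{\tau}\CC_{\tau}$ over the embeddings $\tau\colon K\to\CC$, with $H^{-1,0}=\bigoplus_{\phi\in\Phi}\CC_{\phi}$. I will use the standard dictionary: abelian subvarieties of $A$ up to isogeny correspond to the $\QQ$-subspaces $W\subseteq K$ for which $W\otimes\CC$ splits as the sum of its intersections with $H^{-1,0}$ and $H^{0,-1}$, the associated abelian variety having dimension $\tfrac12\dim_{\QQ}W$; and that the field $K$, acting faithfully on the $2g$-dimensional space $H_1(A,\QQ)$, is its own commutant in $\End(A)\otimes\QQ$.

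For the implication ``$\Phi$ not primitive $\Rightarrow$ $A$ not simple'' I would argue directly: if $\Phi$ is induced from a CM type $\Phi'$ of a CM field $K'$ with $K'\subsetneq K$, choose a $K'$-basis of $K$, giving a $\QQ$-linear isomorphism $K\cong (K')^{[K:K']}$. Tensoring with $\CC$ and grouping the embeddings $\tau$ of $K$ by their restriction $\psi=\tau|_{K'}$, one checks — using precisely that $\Phi=\{\tau:\tau|_{K'}\in\Phi'\}$, so that $H^{-1,0}$ is a union of whole ``$\psi$-blocks'' — that this isomorphism identifies $H_1(A,\QQ)$ with $[K:K']$ copies of the Hodge structure $K'$ of Hodge type $\Phi'$. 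Hence $A$ is isogenous to $B^{[K:K']}$ with $B$ of type $(K',\Phi')$, and $[K:K']\ge 2$ shows $A$ is not simple.

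For the converse, ``$A$ not simple $\Rightarrow$ $\Phi$ not primitive'': since $\End(A)\otimes\QQ$ contains the field $K$ as its own commutant, its center lies in $K$ and so has no nontrivial idempotents; thus $A$ is isotypic, $A\sim B^{n}$ with $B$ simple and $n\ge 2$. The simple factor $B$ inherits complex multiplication, and here one invokes the structural fact that the endomorphism algebra of a simple CM abelian variety is a CM \emph{field} $K_0$ with $[K_0:\QQ]=2\dim B$ — this follows from Albert's classification of endomorphism algebras together with the positivity of the Rosati involution. Then $\End(A)\otimes\QQ\cong M_n(K_0)$, and the maximal commutative subfield $K$ contains the center $K_0$ with $[K:K_0]=n\ge 2$. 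Choosing an isogeny $A\to B^{n}$ equivariant for the diagonal action of $K_0=\End(B)\otimes\QQ$ gives $H^{-1,0}_A\cong (H^{-1,0}_B)^{\oplus n}$; decomposing both sides according to the embeddings $\psi\colon K_0\to\CC$ and comparing dimensions block by block forces: $\tau\in\Phi$ if and only if $\tau|_{K_0}$ lies in the CM type $\Phi_0$ of $B$. So $\Phi$ is induced from $(K_0,\Phi_0)$ with $K_0\subsetneq K$, hence is not primitive. The main obstacle here is the cited structural input (commutativity and correct degree of the endomorphism algebra of the simple factor); once that is granted, extracting the induced CM type is just bookkeeping with the $K_0$-eigenspace decomposition of the tangent space through the isogeny $A\sim B^{n}$.
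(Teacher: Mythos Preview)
Your argument is correct and is precisely the classical proof from Shimura~\cite{goro3}, \S 8.2; note however that the paper does not supply its own proof of this statement at all --- it is quoted as a black box with a \qed\ and a reference. So there is nothing to compare: you have written out what the paper merely cites, and your reconstruction (analytic uniformization, the isotypic decomposition via the commutant of $K$, the identification of $K_0=\End(B)\otimes\QQ$ as a CM subfield of $K$ of degree $2\dim B$ via Albert/Rosati, and reading off that $\Phi$ is induced from the CM type of $B$) is the standard route and is sound.
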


\begin{thm}[{First Main Theorem of Complex Multiplication, \cite[Main Theorem~1]{shimura3} in \S 15.3}] \label{main theorem} Let  $(K,\Phi)$ be a primitive CM type and let $(K^r,\Phi^r)$ be its reflex. Let $P=(A,\theta,\varphi)$ be a polarized simple abelian variety of type $(K,\Phi)$ with CM by~$\OO_K$. Let $M$ be the field of moduli of $(A,\varphi)$. Then $K^r\cdot M$ is the unramified class field over~$K^r$ corresponding to the ideal group  \[I_{0}(\Phi^{r}):=\{\bb \in I_{K^{r}}:\ \N_{\Phi^{r}}(\bb)=(\alpha)\ \text{and}\ \alpha\overline{\alpha}\in\QQ\ \text{\normalfont{for some}} \ \alpha\in K^{\times} \}.\]
\end{thm}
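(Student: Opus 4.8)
The plan is to identify $M_K/K^r$ as an abelian extension by computing its Artin map, the essential input being the Shimura--Taniyama reduction theory relating the Galois conjugates of $A$ to the reflex type norm $\N_{\Phi^r}$. First one checks that $M_K$ is a number field. For $\sigma\in\Gal(\overline{\QQ}/K^r)$ the conjugate $(A,C,\theta)^\sigma$ is again a polarized abelian variety of type $(K,\Phi)$ with CM by $\OO_K$ (the condition $\sigma\Phi=\Phi$ that identifies its type is exactly $\sigma|_N\in\Gal(N/K^r)$ for $N$ a normal closure), and since $\Phi$ is primitive, $A$ is absolutely simple with $\End^0(A)=\theta(K)$ by Theorem \ref{simple}. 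As there are only finitely many isomorphism classes of polarized abelian varieties of type $(K,\Phi)$ with CM by $\OO_K$, the stabilizer $\{\sigma:(A,C)^\sigma\cong(A,C)\}=\Gal(\overline{\QQ}/M)$ has finite index in $\Gal(\overline{\QQ}/\QQ)$; hence $M$, and a fortiori $M_K=K^r\cdot M$, is a number field.

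The arithmetic heart is the Shimura--Taniyama relation. Fix a model of $(A,C,\theta)$ over a sufficiently large number field $F$ (a field of definition containing $M_K$), and let $\p$ be a prime of $K^r$ avoiding the finitely many bad primes --- those ramifying in $F$, of bad reduction, or dividing the degree of $C$ (for simplicity one may also keep only those of ordinary reduction). Reducing modulo a prime of $F$ above $\p$ gives an abelian variety $\widetilde A$ over the residue field with CM by $\OO_K$, whose Frobenius endomorphism lies in $\theta(\OO_K)$, and one computes its associated ideal in terms of $\N_{\Phi^r}(\p)$; lifting back to characteristic $0$ yields the conjugation formula: if $\sigma$ fixes $K^r$ and corresponds under the reciprocity map to an idele $\mathfrak{s}$ of $K^r$, then $(A,C,\theta)^\sigma$ is obtained from $(A,C,\theta)$ by twisting the period lattice with the fractional ideal $\N_{\Phi^r}(\mathfrak{s})$ and scaling the polarization by $\N_{K^r/\QQ}(\mathfrak{s})\in\QQ^{\times}_{>0}$. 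I would establish this either analytically, realizing $A$ as $\CC^g/\Phi(\aaa)$ and running the Shimura--Taniyama congruence argument on the reduction, or via the Honda--Tate description of $\widetilde A$ together with the Shimura--Taniyama formula for the $\p$-adic valuations of the Frobenius.

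The theorem then follows by class field theory. Since $M$ is the field of moduli of $(A,C)$ and $M_K=K^r\cdot M$, an automorphism $\sigma$ fixing $K^r$ fixes $M_K$ if and only if $(A,C)^\sigma\cong(A,C)$, i.e.\ if and only if $\N_{\Phi^r}(\mathfrak{s})$ is a principal fractional ideal generated by some $\alpha\in K^\times$ with $\alpha\overline\alpha=\N_{K^r/\QQ}(\mathfrak{s})$ --- on the level of ideals, exactly the condition defining $I_0(\Phi^r)$. Hence the kernel of the Artin map of $M_K/K^r$ is $I_0(\Phi^r)$, so $M_K/K^r$ is abelian and is the class field attached to $I_0(\Phi^r)$; and a unit idele $\mathfrak{s}$ has $\N_{\Phi^r}(\mathfrak{s})$ a unit idele of $K$ and $\N_{K^r/\QQ}(\mathfrak{s})$ a unit idele of $\QQ$, so it lies in this kernel, forcing trivial conductor, i.e.\ $M_K/K^r$ unramified (equivalently $I_0(\Phi^r)\supseteq P_{K^r}$, as the type-norm relation shows directly). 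A little Chebotarev bookkeeping is needed to conclude the kernel is exactly $I_0(\Phi^r)$ rather than a proper overgroup, using that $I_0(\Phi^r)$ has finite index. The main obstacle is the Shimura--Taniyama relation of the previous paragraph: this is where the reflex type norm genuinely enters the picture, and it rests either on a delicate $\p$-adic congruence computation in the analytic model or on the full strength of Honda--Tate theory; a secondary technical point is the handling of the excluded bad and non-ordinary primes, needed to sharpen ``abelian'' to ``unramified'' and to cover all residue characteristics, which I would treat with standard base-change and limiting arguments.
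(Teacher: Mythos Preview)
The paper does not prove this theorem: it is quoted verbatim as the First Main Theorem of Complex Multiplication from Shimura's book (Main Theorem~1 in \S15.3 of \cite{goro3}), with no argument supplied. There is therefore nothing in the paper to compare your proposal against.

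That said, your sketch is a faithful outline of the classical proof strategy in Shimura--Taniyama and Shimura: realize $A$ analytically as $\CC^g/\Phi(\aaa)$, use the reduction theory at good primes to identify the Frobenius as an element of $\theta(\OO_K)$ generating $\N_{\Phi^r}(\p)$, lift this to the conjugation formula $(A,C,\theta)^\sigma \cong \N_{\Phi^r}(\mathfrak{s})\cdot(A,C,\theta)$ for $\sigma$ corresponding to the idele $\mathfrak{s}$, and read off the Artin kernel. A couple of minor inaccuracies: restricting to ordinary reduction is unnecessary and in fact misleading (the Shimura--Taniyama formula holds at every prime of good reduction, and indeed good supersingular primes are essential when $K^r\neq\QQ$); and the unramifiedness is more cleanly obtained not from a limiting argument over bad primes but directly from the observation that $P_{K^r}\subset I_0(\Phi^r)$, since for $\bb=(\beta)$ one has $\N_{\Phi^r}(\bb)=(\N_{\Phi^r}(\beta))$ with $\N_{\Phi^r}(\beta)\overline{\N_{\Phi^r}(\beta)}=\N_{K^r/\QQ}(\beta)$. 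But these are refinements, not gaps: the architecture of your argument is the standard one, and the paper simply takes the result as input.
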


The \textit{Jacobian} $J(C)$ of a curve $C/k$ of genus $g$ is an abelian
variety of dimension~$g$ such that we have  $J(C)({\overline{k}}) = \Pic^{0}(C_{\overline{k}})$;
for details we refer to \cite{milnejac}.
We say that a curve has \emph{CM} if its Jacobian does.

\begin{cor} \label{cor:clnrone}
If a curve $C$ is of primitive type $(K,\Phi)$ with CM by $\OO_K$ and defined over $K^r$, then the CM class group $I_{K^r}/I_{0}(\Phi^{r})$ is trivial. \qed
\end{cor}
\begin{remk}
	The converse to Corollary~\ref{cor:clnrone} is true as well 
	and is due to Milne~\cite{milne-definition, milne-correction}.
	See also Bouyer-Streng \cite[Theorem~5.3]{bouyer}.
\end{remk}

In the genus-$2$ case, a quartic CM field $K$ is either cyclic Galois,
biquadratic Galois, or non-Galois with Galois group $D_4$
(Shimura \cite[Example 8.4(2)]{shimura3}).
We restrict ourselves to CM curves with a simple Jacobian,
which therefore have primitive CM types by Theorem~\ref{simple}.
In the biquadratic case, all CM types are non-primitive,
so we restrict to non-biquadratic fields.
The corresponding CM fields of such curves are not biquadratic,
by Example~8.4-(2) in Shimura \cite{shimura3}.

The \textit{CM class number one problem for CM fields of degree $2g$}
is the problem of finding all CM class number one pairs $(K,\Phi)$ of degree~$2g$. 
Theorems \ref{thm: non-normallist} and \ref{thm: cycliclist} solve
this problem for non-biquadratic quartic CM fields.

\section{A first bound}\label{non-normal-partone}

Sections~\ref{non-normal-partone}--\ref{sec:enumerating}
prove Theorem~\ref{thm: non-normallist}.
The case of cyclic CM fields uses the same ideas except for some shortcuts, and is treated in Section \ref{cyclic}.

In this section, we find an effective upper bound for the discriminant of the non-normal quartic CM 
fields with CM class number one.
The bound will be too large for practical purposes, hence we improve the bound
using ramification theory in Section~\ref{non-normal-parttwo}.

\subsection{An effective bound} \label{finiteness}

We first prove the following relation between the relative class 
number $h^{*}_K:=h_K/h_F$ and the number $t_K$ of ramified primes in $K/F$.

\begin{prop}\label{prop2} Let $K$ be a non-biquadratic quartic CM field and let $F$ be the real quadratic subfield of $K$. Suppose $I_{0}(\Phi^{r})=I_{K^r}$. Then we have $h^{*}_{K}=2^{t_K-1}$, where~$t_K$ is the number of primes in $F$ that are ramified in~$K$.

Moreover, we have $h^{*}_{K^r}=2^{t_{K^r}-1}$, where $t_{K^r}$ is the number of primes in $F^r$ that are ramified in~$K^r$.
\end{prop}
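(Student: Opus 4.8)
\medskip

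The plan is to combine an identity for the composition of the two type norms with the genus theory of the quadratic extension $K/F$. First I would record a purely formal fact. Since $\Phi$ is primitive, $(K,\Phi)$ is the reflex of $(K^{r},\Phi^{r})$, so $\N_{\Phi^{r}}\circ\N_{\Phi}$ is an endomorphism of $I_{K}$; computing inside $\OO_{N}$ with the dihedral relations of $G=\Gal(N/\QQ)$ (so that $\overline{\cdot}=y^{2}$, $\Phi=\{\id,y\}|_{K}$, $\Phi^{r}=\{\id,y^{3}\}|_{K^{r}}$) one gets, for every $\bb\in I_{K}$,
\[
\N_{\Phi^{r}}\!\big(\N_{\Phi}(\bb)\big)\,\OO_{N}\;=\;\bb^{2}\cdot y\!\big(\N_{K/F}(\bb)\big)\,\OO_{N},\qquad \N_{K/F}(\bb)=\bb\,\overline{\bb}\in I_{F},
\]
where $y$ restricts to the nontrivial automorphism of $F/\QQ$; equivalently $\N_{\Phi^{r}}(\N_{\Phi}(\bb))=\bb^{2}\cdot\big(y|_{F}(\N_{K/F}\bb)\big)\OO_{K}$ in $I_{K}$. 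Since $K$ is a CM field with maximal totally real subfield $F$, the extension $K/F$ ramifies at the infinite places, hence $K\cap H_{F}=F$ and $\N_{K/F}\colon\Cl_{K}\to\Cl_{F}$ is surjective; so $\Cl_{K}^{-}:=\ker(\N_{K/F}\colon\Cl_{K}\to\Cl_{F})$ has order $h_{K}/h_{F}=h_{K}^{*}$.

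Then I would bring in the hypothesis. One always has $P_{K^{r}}\subseteq I_{0}(\Phi^{r})$ (via $\N_{\Phi^{r}}(x)\,\overline{\N_{\Phi^{r}}(x)}=\N_{K^{r}/\QQ}(x)$), so $I_{0}(\Phi^{r})=I_{K^{r}}$ means precisely that $\N_{\Phi^{r}}(\cc)$ is principal in $\OO_{K}$ for every $\cc\in I_{K^{r}}$. Applying this to $\cc=\N_{\Phi}(\bb)$ and using the identity above shows that $[\bb]^{2}\in j_{*}\Cl_{F}$ for all $[\bb]\in\Cl_{K}$ (where $j_{*}\colon\Cl_{F}\to\Cl_{K}$ is extension of ideals) and that $[\bb]^{2}=1$ whenever $[\bb]\in\Cl_{K}^{-}$. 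Hence $\Cl_{K}^{-}$ is elementary abelian of exponent $2$ — so $h_{K}^{*}$ is a power of $2$ — and moreover $\Cl_{K}^{2}\subseteq j_{*}\Cl_{F}$. Because $\Phi$ is the reflex type of $\Phi^{r}$, interchanging the roles of $(K,\Phi)$ and $(K^{r},\Phi^{r})$ (again using that $\N_{\Phi^{r}}(\cc)$ is principal, hence so is $\N_{\Phi}(\N_{\Phi^{r}}(\cc))$) yields the same two conclusions for $K^{r}$: $\Cl_{K^{r}}^{-}$ is elementary abelian of exponent $2$ and $\Cl_{K^{r}}^{2}\subseteq j_{*}\Cl_{F^{r}}$.

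Next I would extract the exact exponent from genus theory. As $\Cl_{K}^{-}$ has exponent $2$ and $\overline{\cdot}$ acts on it by inversion, $\Cl_{K}^{-}$ lies inside the group $\Cl_{K}^{\Gal(K/F)}$ of ambiguous classes, and Chevalley's ambiguous class number formula for $K/F$ gives
\[
\big|\Cl_{K}^{\Gal(K/F)}\big|\;=\;\frac{h_{F}\,\prod_{v}e_{v}}{[K:F]\cdot\big[\OO_{F}^{\times}:\OO_{F}^{\times}\cap\N_{K/F}(K^{\times})\big]},
\]
in which $\prod_{v}e_{v}=2^{t_{K}}\cdot 2^{2}$, the factor $2^{2}$ coming from the two archimedean places of $F$, which ramify in the totally imaginary field $K$. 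Combining this with the facts that $\Cl_{K}^{\Gal(K/F)}/j_{*}\Cl_{F}\cong\widehat{H}^{0}(\Gal(K/F),\Cl_{K})$ is elementary abelian, that $\Cl_{K}^{-}$ is the $\overline{\cdot}$-fixed part of $\Cl_{K}[2]$, and — crucially — with the inclusion $\Cl_{K}^{2}\subseteq j_{*}\Cl_{F}$ from the previous step, one should be able to deduce that the unit index equals $4$ and that the $\Cl_{F}$-contributions (capitulation in $K/F$, and the connecting map $\widehat{H}^{0}\to\Cl_{F}/\Cl_{F}^{2}$) cancel, leaving $h_{K}^{*}=|\Cl_{K}^{-}|=2^{t_{K}-1}$. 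Running the identical argument for $K^{r}/F^{r}$, all of whose inputs were produced above, then gives $h_{K^{r}}^{*}=2^{t_{K^{r}}-1}$.

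The main obstacle is this last step. The two type-norm computations are routine, but converting the conclusions ``$\Cl_{K}^{-}$ elementary abelian of exponent $2$'' and ``$\Cl_{K}^{2}\subseteq j_{*}\Cl_{F}$'' into the sharp equality $h_{K}^{*}=2^{t_{K}-1}$ requires pinning down the unit index $[\OO_{F}^{\times}:\OO_{F}^{\times}\cap\N_{K/F}K^{\times}]$ (it must come out $4$, i.e.\ a totally positive fundamental unit of $F$ must fail to be a local norm from $K$ at some ramified prime) and showing that the capitulation kernel $\ker(j_{*}\colon\Cl_{F}\to\Cl_{K})$ together with the connecting-map term exactly offset the factor $h_{F}$ in Chevalley's formula, so that the $2$-adic count lands on $t_{K}-1$ rather than, say, $t_{K}$. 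This $2$-adic bookkeeping inside $\Cl_{K}$ — keeping precise track of how $\Cl_{F}$, the units of $F$, and the ramified primes sit inside $\Cl_{K}$ — is where the real work lies.
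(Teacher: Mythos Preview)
Your setup is right and your type-norm identity is the same as the paper's (just written differently: the paper factors it as $\N_{\Phi^{r}}\N_{\Phi}(\aaa)=\N_{K/\QQ}(\aaa)\cdot(\aaa/\overline{\aaa})$). But the final step is a genuine gap, not just missing bookkeeping. You deduce only that $\Cl_{K}^{-}$ is $2$-torsion and $\Cl_{K}^{2}\subseteq j_{*}\Cl_{F}$, and then hope Chevalley's formula closes the count. The trouble is exactly the unit index you flag: you need $[\OO_{F}^{\times}:\OO_{F}^{\times}\cap\N_{K/F}K^{\times}]=4$, but this is \emph{not} automatic. It holds when $\N_{F/\QQ}(\epsilon)=-1$, since then the totally positive units already have index $4$; but under the hypothesis one can have $F=\QQ(\sqrt{p})$ with $p\equiv 3\ (\mod\ 4)$, where $\N_{F/\QQ}(\epsilon)=+1$ and the totally positive units have index $2$, so the index is $2$ or $4$ according to whether the totally positive fundamental unit is a local norm at every finite ramified prime of $K/F$---information your two conclusions do not supply. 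Your proposed cancellation of the $h_{F}$-terms has the same problem: it is not forced by what you have derived.

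The paper extracts strictly more from the \emph{same} identity and thereby avoids Chevalley altogether. Using the norm condition built into $I_{0}(\Phi^{r})$, one gets not merely that $\aaa^{2}\cdot(\text{ideal from }F)$ is principal, but that $\aaa/\overline{\aaa}=(\beta)$ with $\beta\overline{\beta}=1$; Hilbert~90 then gives $\beta=\overline{\gamma}/\gamma$, so $\gamma\aaa=\overline{\gamma\aaa}$ and every ideal of $K$ is equivalent to an \emph{ambiguous ideal}: $I_{K}=I_{K}^{H}P_{K}$ with $H=\Gal(K/F)$. Separately and unconditionally (using only $\mu_{K}=\{\pm1\}$, hence $\OO_{K}^{\times}=\OO_{F}^{\times}$), a short exact-sequence count gives $h_{K}^{*}=2^{t_{K}-1}\,[I_{K}:I_{K}^{H}P_{K}]$: the factor $2^{t_{K}}$ is $[I_{K}^{H}:I_{F}]$, and the missing factor of $2$ comes from the two-line computation $[P_{K}^{H}:P_{F}]=2$ via the surjection $P_{K}^{H}\to\{\pm1\}$, $(\alpha)\mapsto\alpha/\overline{\alpha}$. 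Plugging in $[I_{K}:I_{K}^{H}P_{K}]=1$ finishes both $K$ and (by the symmetric identity) $K^{r}$. So the fix to your argument is: rewrite your identity to isolate $\aaa/\overline{\aaa}$, apply Hilbert~90, and replace the Chevalley step by this direct ambiguous-ideal count.
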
 

\begin{remk}
In the case where $K/\QQ$ is cyclic quartic, this result is $(i)\Rightarrow (iii)$ of Proposition 4.5 in Murabayashi \cite{murab}. 
\end{remk}

On the other hand, if $K$ is a non-normal quartic CM field, Louboutin proves $h^{*}_{K}\approx \sqrt{d_K/d_F}$ with an effective error bound, see Proposition~\ref{louboutin lower bound}.  Putting this together with the result in Proposition~\ref{prop2} gives approximately $\sqrt{d_K/d_F}\leq 2^{t_K-1}$. As the left hand side grows more quickly than the right, this relation will give a bound on the discriminant, precisely see Proposition \ref{effectiveness}.

We begin the proof of Proposition \ref{prop2} with the following lemma. 

Let $I_K$ be the group of fractional ideals in $K$ and let $P_K$ be the group of principal fractional ideals in $K$. 

\begin{lem}\label{lem1} Let $K$ be a CM field and let $F$ be the maximal totally real subfield of $K$. Let $H$ denote the group $\Gal(K/F)$. Put $I^{H}_{K}=\{\bb\in I_{K}\ |\ 
\overline{\bb}=\bb\}$ and $P^{H}_K=P_K\cap I_K^{H}$. Suppose that the group 
of roots of unity $\mu_K$ of $K$ is $\{\pm1\}$. Then we have $h_{K}^{*}=2^{t_{K}-1}[I_K:I^{H}_{K}P_K]$, where~$t_K$ is the number of primes in $F$ that ramify in $K$.
\end{lem}

\begin{proof} We have the exact sequence 
\begin{equation}\label{exact}
1\rightarrow I_{F} \rightarrow I_{K}^{H} \rightarrow \bigoplus_{\p \ \text{prime of}\ F}\ZZ/e_{K/F}(\p)\ZZ\rightarrow 1
\end{equation}
and  
\[
\bigoplus\limits_{\p \ \text{prime of}\ F}\ZZ/e_{K/F}(\p)\ZZ\quad\cong\quad(\ZZ/2\ZZ)^{t_K}.\]   The map $\varphi:I_{K}^{H}\rightarrow I_{K}/P_{K}$ induces an isomorphism $$I_{K}^{H}/P_{K}^{H}\cong \text{im}(\varphi)=I^{H}_{K}P_K/P_K$$ so by (\ref{exact}), we have \[h_F = [I_F:P_F] = \frac{[I_{K}^{H}:P_{K}^{H}][P_{K}^{H}:P_{F}]}{[I_{K}^{H}:I_{F}]}=2^{-t_K}[I^{H}_{K}P_K:P_K][P_{K}^{H}:P_{F}],\] hence \[h^{*}_{K}:=\frac{h_{K}}{h_F}=2^{t_K}\frac{[I_K:I_{K}^{H}P_K]}{[P_{K}^{H}:P_{F}]}.
\]
It now suffices to prove $[P_{K}^{H}:P_{F}]=2$, for which we claim that there is a surjective group 
homomorphism ${\lambda}: P_{K}^{H}{\rightarrow} \mu_K = \{\pm1\}$ given by $ 
\lambda((\alpha))={\alpha}/{\overline{\alpha}}$ with kernel $P_F$. 

\textit{Proof of the Claim}.  The map $\lambda$ is well-defined because the image is independent of the 
choice of $\alpha$ as $\OO^{*}_K=\OO^{*}_F$ (cf. \cite[Lemma 1]{loub5}). It is clear that it is a 
homomorphism. Since ${\alpha}/{\overline{\alpha}}=1$ if and only if ${\alpha}\in F^\times$, we have $\ker(\lambda) = P_F$. As $K=F(\sqrt{-\beta})$ with a totally positive element $\beta$ in~$F$, 
we have $\lambda((\sqrt{-\beta}))=-1$, so $\lambda$ is surjective.  

Hence $[P_{K}^{H}:P_{F}]=|\im(\lambda)| = 2$. 
\end{proof}

\begin{lem} \label{lemext} 
Let $(K,\Phi)$ be a primitive CM pair. Then we have $[I_K:I^{H}_{K}P_K]\leq[I_{K^r}:I_{0}(\Phi^r)]$. Moreover, we have $[I_{K^r}: I^{H'}_{K^r}P_{K^r}]\leq[I_{K^r}:I_{0}(\Phi^r)]$, where $H'=\Gal(K^r/F^r)$.
\end{lem}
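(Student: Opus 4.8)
The plan is to exhibit the type norm as the homomorphism that links the ideal group $I_0(\Phi^r) \subset I_{K^r}$ to the group $I^H_K P_K \subset I_K$, and then to show that $\N_{\Phi^r}$ descends to a surjection $I_{K^r}/I_0(\Phi^r) \twoheadrightarrow I_K / I^H_K P_K$; the first inequality then follows by comparing orders (both groups are finite, being quotients of the finite-index subgroups sitting inside the finite class groups, or directly because $I_K/I^H_K P_K$ is a quotient of $\Cl_K$). Since the reflex of the reflex of $(K,\Phi)$ is $(K,\Phi)$ itself (in the dihedral-degree-$8$ situation $K^{rr}\cong K$, as one reads off from the subgroup lattice: $\langle xy\rangle$ has reflex $\langle x\rangle$), the very same argument applied with the roles of $K$ and $K^r$ interchanged — using $\N_{\Phi^{rr}} = \N_\Phi : K \to K^r$ — gives a surjection $I_K / I_0(\Phi) \twoheadrightarrow I_{K^r}/I^{H'}_{K^r} P_{K^r}$, hence $[I_{K^r}:I^{H'}_{K^r}P_{K^r}] \le [I_K : I_0(\Phi)]$. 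It remains only to observe that $[I_K:I_0(\Phi)] = [I_{K^r}:I_0(\Phi^r)]$; this is standard (the two CM class numbers agree — e.g. Shimura, or it can be seen via the reflex relations on type norms), which yields the second claimed inequality.

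The concrete steps are as follows. First I would recall that for $\bb \in I_{K^r}$ one has $\N_{\Phi^r}(\bb)\overline{\N_{\Phi^r}(\bb)} = (\N_{K^r/\QQ}(\bb))$ by the identity $\N_{\Phi}\overline{\N_{\Phi}} = \N_{K/\QQ}$ recalled in Section \ref{section: CM}, so in particular $\N_{\Phi^r}(\bb)$ is always fixed by $\overline{\cdot}$ up to a rational ideal — hence $\N_{\Phi^r}(\bb) \in I^H_K$ automatically, wait: more precisely $\N_{\Phi^r}(\bb)\overline{\N_{\Phi^r}(\bb)}$ is the extension of a rational ideal, which need not force $\N_{\Phi^r}(\bb)$ itself into $I^H_K$. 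So instead I would argue: the map $\N_{\Phi^r} : I_{K^r} \to I_K$ followed by the projection $I_K \to I_K/I^H_K P_K$ kills exactly (at least) $I_0(\Phi^r)$. Indeed if $\bb \in I_0(\Phi^r)$ then $\N_{\Phi^r}(\bb) = (\alpha)$ with $\N_{K/\QQ}(\alpha)$... rather with $\alpha\overline\alpha = \N_{K^r/\QQ}(\bb) \in \QQ$, so $(\alpha)\overline{(\alpha)} = (\alpha\overline\alpha)$ is rational, meaning $(\alpha) \in I^H_K P_K$ — actually $(\alpha)$ is principal, so trivially in $P_K \subset I^H_K P_K$. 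Thus $I_0(\Phi^r) \subseteq \ker(I_{K^r} \to I_K/I^H_K P_K)$, giving a well-defined homomorphism $I_{K^r}/I_0(\Phi^r) \to I_K/I^H_K P_K$.

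The heart of the matter, and the step I expect to be the main obstacle, is \textbf{surjectivity}: every ideal class in $I_K/I^H_K P_K$ must be hit by a type norm from $K^r$. Here I would use the reflex identity $\N_{\Phi^r}\circ \N_\Phi (\aaa) = \N_{K/\QQ}(\aaa)\,\OO_K$ wait — the relevant one is $\N_{\Phi}(\N_{\Phi^r}(\bb)) \equiv \bb^{?}$; the correct classical statement (Shimura) is that for $\aaa \in I_K$ one has $\N_{\Phi^r}(\N_\Phi(\aaa)) = \aaa \overline{\aaa}^{\,?}$ — I would look this up and use it to show that for any $\aaa \in I_K$, the class of $\aaa$ in $I_K/I^H_K P_K$ lies in the image, using that $\aaa/\overline\aaa$ represents the same coset issue and that $I^H_K$ is generated modulo $P_K$ by ramified primes of $F$ together with ideals of the form $\aaa\overline\aaa$. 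Concretely: $I_K / I^H_K P_K$ is a quotient of $\Cl_K$ on which $\overline{\cdot}$ acts as inversion, and the type norm from $K^r$ surjects onto such a quotient by the reflex theorem for type norms. Once surjectivity is in hand, finiteness of $\Cl_K$ gives $[I_K:I^H_K P_K] \le [I_{K^r}:I_0(\Phi^r)]$, and the symmetric argument plus the equality of CM class numbers finishes both inequalities.
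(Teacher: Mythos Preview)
Your approach goes in the wrong direction, and the gap you yourself flag (surjectivity of $\N_{\Phi^r}$ onto $I_K/I^H_K P_K$) is real: the sketch you give does not establish it. Using the reflex identity $\N_{\Phi^r}\N_\Phi(\aaa)=\N_{K/\QQ}(\aaa)\cdot\aaa/\overline{\aaa}$ and setting $\bb=\N_\Phi(\aaa)$ only shows that $\N_{\Phi^r}(\bb)\equiv \aaa^{2}\pmod{I^H_K P_K}$, so you hit only the squares in $I_K/I^H_K P_K$; since that quotient is in general a nontrivial $2$-group, this is not enough. The appeals to a ``reflex theorem for type norms'' and to ``$\overline{\cdot}$ acts as inversion'' do not close this gap.

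The paper avoids the issue by reversing the arrow: it considers $\N_\Phi:I_K\to I_{K^r}/I_0(\Phi^r)$ and shows its \emph{kernel} is contained in $I^H_K P_K$. If $\N_\Phi(\aaa)\in I_0(\Phi^r)$, then $\N_{\Phi^r}\N_\Phi(\aaa)=(\alpha)$ with $\alpha\overline{\alpha}=\N_{K^r/\QQ}(\N_\Phi(\aaa))=\N_{K/\QQ}(\aaa)^2$; combined with $\N_{\Phi^r}\N_\Phi(\aaa)=\N_{K/\QQ}(\aaa)\cdot\aaa/\overline{\aaa}$ this gives $\aaa/\overline{\aaa}=(\beta)$ with $\beta\overline{\beta}=1$, and Hilbert 90 produces $\gamma$ with $\beta=\overline{\gamma}/\gamma$, whence $\gamma\aaa=\overline{\gamma\aaa}\in I^H_K$ and $\aaa\in I^H_K P_K$. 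Then $[I_K:I^H_K P_K]\le[I_K:\ker]\le[I_{K^r}:I_0(\Phi^r)]$. For the second inequality the paper does not pass through the equality of CM class numbers at all: it simply shows $I_0(\Phi^r)\subset I^{H'}_{K^r}P_{K^r}$ directly, by the same computation with the identity $\N_\Phi\N_{\Phi^r}(\bb)=\N_{K^r/\QQ}(\bb)\cdot\bb/\overline{\bb}$. Your detour through $[I_K:I_0(\Phi)]=[I_{K^r}:I_0(\Phi^r)]$ is both unnecessary and a heavier input than anything the lemma requires.
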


\begin{proof} To prove the first assertion, we show that the kernel of the map 
$\N_{\Phi}:I_K\rightarrow I_{K^r}/I_{0}(\Phi^r)$ is contained in $I^{H}_{K}P_K$.  For any $\aaa\in 
I_K$, we can compute (see \cite[(3.1)]{shimura2})
\[\N_{\Phi^{r}}\N_{\Phi}(\aaa)=\N_{K/\QQ}(\aaa)\frac{\aaa}{\overline{\aaa}}.\] 
Suppose $\N_{\Phi}(\aaa)\in I_{0}(\Phi^r)$. Then $N_{K/\QQ}(\aaa)\frac{\aaa}
{\overline{\aaa}}=(\alpha)$, where $\alpha\in K^{\times}$ and $\alpha\overline{\alpha}=N_{K^r/\QQ}
(N_{\Phi}(\aaa))=N_{K/\QQ}(\aaa)^2\in\QQ$. So $\frac{\aaa}{\overline{\aaa}}=(\beta)$, where 
$\beta=\N_{K/\QQ}(\aaa)^{-1}\cdot\alpha$, and hence $\beta\overline{\beta}=1$. There is a $\gamma\in 
K^{\times}$ such that $\beta=\frac{\overline{\gamma}}{\gamma}$ (this is a special case of Hilbert's 
Theorem 90, but can be seen directly by taking $\gamma = \overline{\epsilon} + 
\overline{\beta}\epsilon$ for any $\epsilon\in {K}$ with $\gamma\neq 0$). Thus we have 
$\aaa=\overline{\gamma\aaa}\cdot(\frac{1}{\gamma})\in  I^{H}_{K}P_{K}$ and therefore 
$[I_K:I^{H}_{K}P_K]\leq[I_{K^r}:I_{0}(\Phi^r)]$.

For the second assertion, we show $I_0(\Phi^r)\subset I_{K^r}^{H'}P_{K^r}$.
By \cite[(3.2)]{shimura2}, we have
\[\N_{\Phi}\N_{\Phi^r}(\bb)=\N_{K^r/\QQ}(\bb)\frac{\bb}{\overline{\bb}}.\]
Suppose $\bb\in I_{0}(\Phi^r)$. Then we have $N_{K^r/\QQ}(\bb)\frac{\bb}{\overline{\bb}}=(\alpha)$, where 
$\alpha\in {K^r}^{\times}$ and $\alpha\overline{\alpha}=N_{\Phi}(N_{K^r/\QQ}(\bb))$ $=N_{K^r/\QQ}
(\bb)^2\in\QQ$. We finish the proof of  $\bb\in  I^{H'}_{K^r}P_{K^r}$ exactly as above. 
\end{proof}

\begin{proof}[Proof of Proposition~\ref{prop2}]
	In case $K\cong\QQ(\zeta_5)$, we have $h_{K}^* = h_{K^r}^* = t_{K} = t_{K^r}=1$
	so the result follows. In all other cases,
	we have
$\mu_K=\{\pm 1\}$, so by Lemma \ref{lem1}, we have  $h^{*}_K=2^{t_{K}-1}
[I_K:I_{K}^{H}P_K]$. We showed in Lemma \ref{lemext} that, under the assumption $I_{0}
(\Phi^{r})=I_{K^{r}}$, the quotient $I_K/I_{K}^{H}P_K$ is trivial. Therefore, we have 
$h^{*}_{K}=2^{t_K-1}$.

Similarly, since $\mu_K=\{\pm 1\}$, by Lemma \ref{lem1}, we have  $h^{*}_{K^r}=2^{t_{K^r}-1}[I_{K^r}:I_{K^r}^{H^r}P_{K^r}]$. By Lemma \ref{lemext}, under the assumption $I_{0}(\Phi^{r})=I_{K^{r}}$, we have $[I_{K^r}:I_{K^r}^{H^r}P_{K^r}] = 1$, hence we get $h^{*}_{K^r}=2^{t_{K^r}-1}$.
\end{proof}

The next step is to use the following bound from analytic number theory. 

Let $d_M$ denote the discriminant of a number field $M$.

\begin{prop}[{Louboutin \cite{loub2}, Remark 27 (1)}] \label{louboutin lower bound} Let $N$ be the 
normal closure of a non-normal quartic CM field $K$ with Galois group $D_4$. Assume 
$d_{N}^{1/8}\geq222$. Then
\begin{equation}\label{rel.cl.2}\qquad \qquad \quad h^{*}_{K}\geq\frac{2\sqrt{d_{K}/d_{F}}}
{\sqrt{e}\pi^{2}(\log(d_{K}/d_{F})+0.057)^{2}}.  
\end{equation}  \qed
\end{prop}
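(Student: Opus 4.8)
The proof is analytic; the statement is essentially a specialization of Louboutin's general lower bounds for relative class numbers, so the plan is to reduce the proposition to his estimate and then invoke it.

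\textbf{Step 1: reduce to a lower bound for $L(1,\chi)$.} Since $K/F$ is quadratic (hence abelian), the quotient $\zeta_{K}(s)/\zeta_{F}(s)$ is the Hecke $L$-function $L(s,\chi)$ of the quadratic character $\chi$ of $F$ cutting out $K$; its conductor is $d_{K}/d_{F}$, and in the present dihedral situation it also equals $L(s,\rho)$ for the $2$-dimensional irreducible character $\rho$ of $G=\Gal(N/\QQ)$ (dihedral induction from $F$). Comparing the residues at $s=1$ of $\zeta_{K}$ and $\zeta_{F}$ through the analytic class number formula, and using the regulator relation $\mathrm{Reg}_{K}=(2/Q_{K})\mathrm{Reg}_{F}$, where $Q_{K}\in\{1,2\}$ is the Hasse unit index, together with $w_{K}=|\mu_{K}|$, one obtains the exact identity
\[ h^{*}_{K}=\frac{Q_{K}\,w_{K}}{4\pi^{2}}\,\sqrt{d_{K}/d_{F}}\;L(1,\chi). \]
Because $\mu_{K}=\{\pm1\}$ for a non-normal quartic CM field and $Q_{K}\geq1$, this reduces the proposition to the explicit inequality
\[ L(1,\chi)\ \geq\ \frac{4/\sqrt{e}}{\bigl(\log(d_{K}/d_{F})+0.057\bigr)^{2}}. \]

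\textbf{Step 2: the lower bound for $L(1,\chi)$.} Here I would appeal to Louboutin's kernel method for quotients of Dedekind zeta functions. The key structural facts are that $\zeta_{K}(s)=\zeta_{F}(s)L(s,\chi)$ is a Dirichlet series with non-negative coefficients, holomorphic on $\CC\setminus\{1\}$ with a simple pole at $s=1$, admitting a completed form with a functional equation, and that the ``missing factor'' $L(s,\chi)=\zeta_{K}/\zeta_{F}$ is entire of degree $2$ with conductor $d_{K}/d_{F}$. Multiplying the completed function by a suitably optimized positive test function and integrating over a vertical line, and exploiting the positivity of the coefficients together with the functional equation, yields a lower bound for $L(1,\chi)$ of the shape $c_{1}/(\log(d_{K}/d_{F})+c_{2})^{2}$, the exponent $2$ reflecting the degree of $L(s,\chi)$. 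The precise constants $2/\sqrt{e}$ and $0.057$, together with the admissibility hypothesis $d_{N}^{1/8}\geq222$ — which via conductor–discriminant ($d_{N}=d_{N_{+}}(d_{K}/d_{F})^{2}$, with $N_{+}$ biquadratic) amounts to a mild lower bound on the conductor — are exactly what comes out of optimizing the kernel in that range. This is Remark~27(1) of \cite{loub2}, which I would cite rather than reprove.

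\textbf{Main obstacle.} The only genuinely hard part is Step 2: the fully effective estimate for $L(1,\chi)$ with the stated numerical constants rests on Louboutin's careful evaluation of the kernel integral, and reproducing it here would be neither short nor illuminating. Step~1, by contrast, is routine: it consists of assembling the class number formula for $\zeta_{K}$ and $\zeta_{F}$, identifying $\zeta_{K}/\zeta_{F}$ with $L(s,\chi)=L(s,\rho)$, and bookkeeping $w_{K}$, $Q_{K}$, and the discriminant relations $d_{K}=d_{F}\cdot\disc(\chi)$ and $d_{N}=d_{N_{+}}(d_{K}/d_{F})^{2}$ in the $D_{4}$-tower, all of which follow from standard references such as Shimura \cite{goro3} and Lang \cite{lang}.
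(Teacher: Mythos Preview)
Your proposal is correct and aligns with the paper's treatment: the paper gives no proof at all for this proposition, simply quoting it from Louboutin \cite{loub2}, Remark~27(1), with a bare \qed. Your Step~1 (the relative class number formula $h^{*}_{K}=\tfrac{Q_{K}w_{K}}{4\pi^{2}}\sqrt{d_{K}/d_{F}}\,L(1,\chi)$ and the specialization $w_{K}=2$, $Q_{K}\ge1$) and Step~2 (Louboutin's effective lower bound for $L(1,\chi)$) are precisely the ingredients behind the quoted result, so you have unpacked what the paper leaves as a black-box citation.
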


\begin{prop} \label{effectiveness}
Let $K$ be a non-normal quartic CM field and let $F$ be the real quadratic subfield of $K$. Let $\Phi$ be a primitive CM type of $K$. Suppose  $I_{0}(\Phi^{r})=I_{K^{r}}$. Then we have 
$d_K/d_F\leq 2\cdot 2^{19}$. 
\end{prop}

\begin{proof} 
Let \[f(D) = \frac{2\sqrt{D}}{\sqrt{e}\pi^2(\log(D)+0.057)^2}\quad \text{and}\quad 
g(t)=2^{-t+1}f(5\Delta_{\lceil t/2\rceil}\Delta_{\lfloor t/2\rfloor}),\]
where $\Delta_t$ is the product of the first $t$ prime numbers,
and $\lceil\cdot\rceil$ and $\lfloor\cdot\rfloor$ round up and down
to integers.

Here, if $D = d_K/d_F$, then $f(D)$ is the right hand side
of the inequality (\ref{rel.cl.2}) in 
Proposition~\ref{louboutin lower bound}. 
The quotient $d_{K}/d_{F}^2$ is the norm of the relative
discriminant, which is divisible by the product of the norms of the primes of $F$
that are ramified in~$K/F$. Hence $d_{K}/d_{F}\geq d_F \Delta_{\lceil t_K/2\rceil}
\Delta_{\lfloor t_K/2\rfloor}$, where $t_K$ is the number of primes in~$F$ that are ramified in $K$.
We also have $d_F\geq 5$.

On the other hand, the function $f$ is monotonically increasing for $D>52$, so if $t_K\geq 3$ 
then $f(d_{K}/d_{F})\geq f(5\Delta_{\lceil t_K/2\rceil}
\Delta_{\lfloor t_K/2\rfloor})$. 
Therefore, under the assumption $I_{0}(\Phi^{r})=I_{K^{r}}$, by Proposition \ref {prop2}, we get (if $t_K\geq 3$)
\begin{equation}\label{lower bd. with delta}
{2^{t_K-1}}\geq f(d_{K}/d_{F})\geq f(5\Delta_{\lceil t_K/2\rceil}
\Delta_{\lfloor t_K/2\rfloor})
\end{equation}
and hence $g(t_K)\leq 1$. The function $g(t)$ is monotonically increasing for $t\geq 8$
and~$g(t) > 1$ if $t=20$.
Therefore, we get $t_K\leq 19$ and $h^{*}_{K} \leq 2^{18}$, hence $d_K/d_F< 2\cdot10^{19}$.
\end{proof}

The bound that we get in Proposition \ref{effectiveness} is unfortunately too large to
list all the fields.
In Section~\ref{non-normal-parttwo} we study ramification of primes in
$N/\QQ$ and find a sharper upper bound for $d_{K^r}/d_{F^r}$, see Proposition \ref{t<6}.

\section{Improved bounds}\label{non-normal-parttwo}

In this section, under the assumption  $I_{0}(\Phi^{r})=I_{K^{r}}$, we study the ramification behavior of primes in $N/\QQ$, and prove that almost all rational primes that are ramified in~$K^r/F^r$ are inert in~$F^r$. 
We precisely prove the following proposition.

\newcommand{\contentsofpropraminFr}{
	Let $K$ be a non-normal quartic CM field and let $F$ be its real quadratic subfield.
Let $\Phi$ be a primitive CM type of $K$
and suppose $I_{0}(\Phi^{r})=I_{K^r}$.

Then we have $F=\QQ(\sqrt{p})$ and $F^r=\QQ(\sqrt{q})$, where~$p$ and~$q$ are prime numbers with $q\not\equiv3\ (\mod\ 4)$.
The primes $p$ and $q$ are split in $F^r$ and $F$ respectively.
Moreover, all primes coprime to
$p$ and $q$
that are ramified in $K^r/F^r$  are inert in~$F/\QQ$ and~$F^r/\QQ$. 
}

\begin{prop}\label{prop: ram_in_Fr}
	\contentsofpropraminFr
\end{prop}

This proposition implies that $d_{K^r}/d_{F^r}$ grows as the square of the product
of such ramified primes and we get a lower bound on $f(d_{K^r}/d_{F^r})$
that grows much faster with $t_{K^r}$ than what we had in the proof
of Proposition~\ref{effectiveness}.
Hence we obtain a better upper bound on~$d_{K^r}/d_{F^r}$, see Proposition~\ref{t<6}.

We begin the proof of Proposition~\ref{prop: ram_in_Fr} with exploring
the ramification behavior of primes in $N/\QQ$.

 \subsection{Non-normal quartic CM fields} 
Suppose that $K/\QQ$ is a non-normal quartic CM field and $F$ is the real quadratic subfield of $K$. The 
normal closure $N$ is a dihedral CM field of degree $8$ with Galois group $G:=\Gal(N/\QQ)= \langle x,
\ y:y^{4}=x^{2}=(xy)^{2}=\id\rangle$. Complex conjugation $\overline{\cdot}$ is $y^2$ in this 
notation and the CM field $K$ is the subfield of $N$ fixed by~$\langle x\rangle$. Let $\Phi$ 
be a CM type of~$K$ with values in~$N'$. We can (and do) identify $N$ with a subfield 
of $N'$ in such a way that $\Phi=\{\id, y|_{K}\}$. Then the reflex field $K^r$ of~$\Phi$ is 
the fixed field of $\langle xy\rangle$, which is a non-normal quartic CM field non-isomorphic to $K$ 
with reflex type $\Phi^{r}=\{\id, y^{3}|_{K^r}\}$, (see \cite[Examples~8.4., 2(C)]{shimura3}). Denote 
the quadratic subfield of $K^{r}$ by $F^r$. 

\begin{figure}[h] 
     \begin{subfigure}
\centering
        \begin{tikzpicture}[scale=0.80] 
    \draw (0, -1.2) node[]{$N$}; 
    \draw (0, -3) node[]{$N_+$};
	\draw (-1.5, -3) node[]{$K$};
	\draw (-3, -3) node[]{$K'$};
	\draw (1.5, -3) node[]{$K^r$};
	\draw (3, -3) node[]{$K'^r$};
    \draw (-2.25, -4.5) node[]{$F$};
    \draw (2.25, -4.5) node[]{$F^r$};
	\draw (0, -4.5) node[]{$F_+$};
    \draw (0, -6) node[]{$\QQ$};
    \draw (0, -4) -- (0, -3.25); 
    \draw (0, -2.50) -- (0, -1.5); 
    \draw (-0.50, -3.25) -- (-2.0, -4.25); 
			\draw (0.50, -3.25) -- (2.0, -4.25); 
    \draw (-2.5, -4.2) -- (-3, -3.28); 
    \draw (2.5, -4.2) -- (3, -3.28); 
		\draw (-2.25, -4.2) -- (-1.5, -3.28); 
    \draw (2.25, -4.2) -- (1.5, -3.28); 
	\draw (-2.75, -2.75) -- (-0.25, -1.25); 
	\draw (2.75, -2.75) -- (0.25, -1.25); 
		\draw (-1.25, -2.75) -- (-0.15, -1.50); 
	\draw (1.25, -2.75) -- (0.15, -1.50); 
    \draw (-2.25, -4.75) -- (-0.50, -5.75); 
    \draw (2.25, -4.75) -- (0.50, -5.75); 
		\draw (0, -4.75) -- (0, -5.50); 
\end{tikzpicture}
\end{subfigure}
     \begin{subfigure}
\centering
        \begin{tikzpicture}[scale=0.80]
    \draw (0, -1.2) node[]{$1$};
    \draw (0, -3) node[]{$\langle y^2\rangle$};
	\draw (-1.5, -3) node[]{$\langle x\rangle$};
	\draw (-3, -3) node[]{$\langle xy^2\rangle$};
	\draw (1.5, -3) node[]{$\langle xy\rangle$};
	\draw (3, -3) node[]{$\langle xy^3\rangle$};
    \draw (-2.25, -4.5) node[]{$\langle x,\ y^2\rangle$};
    \draw (2.25, -4.5) node[]{$\langle xy,\ y^2\rangle$};
	\draw (0, -4.5) node[]{$\langle y\rangle$};
    \draw (0, -6) node[]{$G$};
    \draw (0, -4) -- (0, -3.25); 
    \draw (0, -2.50) -- (0, -1.5); 
    \draw (-0.50, -3.25) -- (-2.0, -4.25); 
			\draw (0.50, -3.25) -- (2.0, -4.25); 
    \draw (-2.5, -4.2) -- (-3, -3.28); 
    \draw (2.5, -4.2) -- (3, -3.28); 
		\draw (-2.25, -4.2) -- (-1.5, -3.28); 
    \draw (2.25, -4.2) -- (1.5, -3.28); 
	\draw (-2.75, -2.75) -- (-0.25, -1.25); 
	\draw (2.75, -2.75) -- (0.25, -1.25); 
		\draw (-1.25, -2.75) -- (-0.15, -1.50); 
	\draw (1.25, -2.75) -- (0.15, -1.50); 
    \draw (-2.25, -4.75) -- (-0.50, -5.75); 
    \draw (2.25, -4.75) -- (0.50, -5.75); 
		\draw (0, -4.75) -- (0, -5.50); 
\end{tikzpicture}
\end{subfigure}
\caption{Lattice of subfields and subgroups} \label{diagram1} 
 \end{figure}
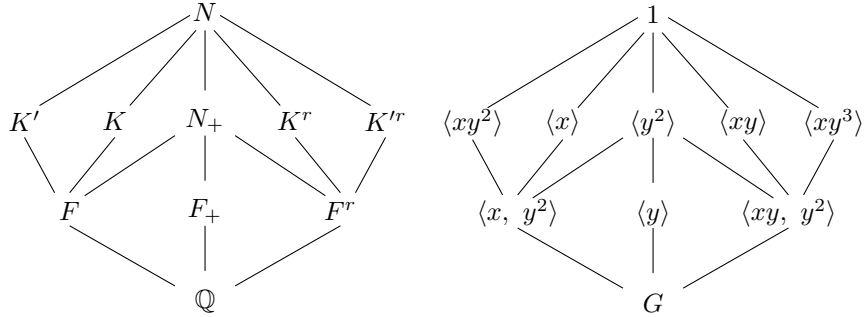

Let $N_+$ be the maximal totally real subfield of $N$, and let $F_+$ be the quadratic subfield of $N_+$ such that $N/F_+$ is cyclic.

\subsection{\texorpdfstring{Ramification of primes in $N/\QQ$}{Ramification of primes in N over Q}}

We will use the following well-known result.
\begin{lem}\label{inertia gp. not V4}
Let $M/L$ be a Galois extension of number fields and $\q$ be a prime
of~$M$ over an odd prime number.
Then there is no surjective homomorphism from a subgroup of $I_\q$ to a Klein four group $V_4$.
\end{lem}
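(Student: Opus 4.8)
The statement says: for a Galois extension $M/L$ and a prime $\q$ of $M$ over an odd prime $\p$ of $L$, no subgroup of the inertia group $I_\q$ surjects onto a Klein four group. The key structural fact I would invoke is the standard description of the inertia group in terms of its higher ramification filtration. Let me write $I_\q$ for the inertia group and $P_\q$ for the wild inertia group (the first ramification group $I_\q^{(1)}$). The quotient $I_\q/P_\q$ is cyclic (it injects into the multiplicative group of the residue field, which is cyclic), and $P_\q$ is a $p$-group where $p$ is the residue characteristic — here $p$ is odd.

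\medskip

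The approach is a short group-theoretic argument. Suppose for contradiction that $J \le I_\q$ surjects onto $V_4 \cong \ZZ/2\ZZ \times \ZZ/2\ZZ$. Since $P_\q$ is a normal $p$-subgroup of $I_\q$ with $p$ odd, and $V_4$ is a $2$-group, the image of $J \cap P_\q$ in $V_4$ is trivial; equivalently, the surjection $J \twoheadrightarrow V_4$ factors through $J/(J\cap P_\q)$, which is (isomorphic to a subgroup of) the cyclic group $I_\q/P_\q$. But a cyclic group cannot surject onto $V_4$, since $V_4$ is not cyclic. This is the contradiction. So the only thing one really uses is: (i) $I_\q$ has a normal Sylow-$p$-subgroup $P_\q$ with cyclic quotient, and (ii) $p$ is odd so that $|V_4|$ is coprime to $|P_\q|$. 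It is precisely hypothesis that $\p$ is \emph{odd} that makes step (ii) work — for $p = 2$ the wild inertia can itself be a Klein four group, so the lemma genuinely needs oddness.

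\medskip

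Concretely, the steps in order: first recall/cite the structure theorem for inertia groups of a prime in a Galois extension of number fields (e.g.\ Serre, \emph{Local Fields}, Ch.\ IV): $P_\q \trianglelefteq I_\q$, the quotient $I_\q/P_\q$ is cyclic, and $P_\q$ is a $p$-group. Second, given a hypothetical surjection $\psi\colon J \twoheadrightarrow V_4$ with $J \le I_\q$, note $J \cap P_\q$ is a $p$-group, hence $\psi(J\cap P_\q)$ is a $2$-subgroup of $V_4$ of odd order, hence trivial. Third, deduce $\psi$ factors through $J/(J\cap P_\q) \hookrightarrow I_\q/P_\q$, a cyclic group. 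Fourth, observe a cyclic group has no quotient isomorphic to $V_4$, contradiction.

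\medskip

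I do not expect any serious obstacle here; the lemma is essentially a packaging of standard ramification theory. The only point requiring a little care is making sure one states the structure of $I_\q$ correctly (in particular that $I_\q/P_\q$ is cyclic, not merely abelian), and being explicit that oddness of the residue characteristic is exactly what forces the $p$-part to die under any map to a $2$-group. One might alternatively phrase the whole thing as: any finite subgroup of $I_\q$ is, up to the normal $p$-Sylow, cyclic, so its abelianization modulo the $p$-part is cyclic, and $V_4$ is not — but the factorization argument above is cleanest to write down.
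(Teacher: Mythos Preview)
Your argument is correct, but it takes a genuinely different route from the paper's own proof. You invoke the structure theorem for inertia groups (wild inertia $P_\q$ is a normal $p$-group with cyclic tame quotient $I_\q/P_\q$), and then a clean group-theoretic factorization kills the problem: since $p$ is odd, any map from a subgroup of $I_\q$ to the $2$-group $V_4$ must vanish on the intersection with $P_\q$, hence factor through a cyclic group, contradiction.

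The paper instead argues at the level of discriminants, avoiding the higher ramification filtration entirely. It translates the hypothetical surjection into the existence of a biquadratic intermediate extension $E/F$ of $M/L$ in which a prime over $\p$ is totally ramified, writes the three quadratic subextensions as $E_i = F(\sqrt{\alpha_i})$ with $\ord_\p(\alpha_i)\in\{0,1\}$, and observes that for an \emph{odd} prime $\p$ the extension $E_i/F$ is ramified at $\p$ exactly when $\ord_\p(\alpha_i)=1$. Since $\alpha_3 \equiv \alpha_1\alpha_2$ modulo squares, an even number of the $E_i$ are ramified at $\p$, so $\p$ cannot be totally ramified in $E/F$. Your approach is shorter and more conceptual once one is willing to cite Serre's \emph{Local Fields}; the paper's approach is more self-contained and elementary, needing only the relation between discriminants and ramification in quadratic extensions. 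Both make the role of oddness transparent: for you it is what makes $|P_\q|$ coprime to $|V_4|$, for the paper it is what makes $\ord_\p(\disc(F(\sqrt{\alpha})/F)) = \ord_\p(\alpha)$.
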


\begin{proof} Suppose that there is a surjective homomorphism 
from a subgroup of $\I_\q$ to $V_4$. In other words, there exists a
biquadratic intermediate extension $E/F$ of $M/L$
such that $\p = \q\cap F$ is totally ramified in~$E/F$.
Denote the three quadratic intermediate 
extensions by $E_i = F(\sqrt{\alpha_i})$ for $i=1,\ 2,\ 3$. Without loss of generality, take $\alpha_i\in \mathcal{O}_F$ with
$\ord_\p(\alpha_i)\in\{0,1\}$ for each $i$. Note $\OO_{E_i}$ contains $\OO_F[\sqrt{\alpha_i}]$ of 
relative discriminant $4\alpha_i$ over $\OO_F$. Since $\p$ is odd, this implies that the relative 
discriminant $\Delta(E_i/F)$ of $\OO_{E_i}$ has $\ord_{\p}(\Delta(E_i/F)) = \ord_{\p}(\alpha_i)$. At the same time, we have $E_3 = F(\sqrt{\alpha_1\alpha_2})$ so $\p$ ramifies in $E_i$ for an even number of $i$'s.
Contradiction.
\end{proof}

\begin{lem} \label{ram1}
	Let $K$ be a non-normal quartic CM field and $\Phi$ a (primitive)
	CM type of~$K$. Let
	$K^r$ be the reflex field of $(K,\Phi)$, and let $F$ and $F^r$ be the real quadratic subfields of $K$ and~$K^r$.
	Then the following assertions hold.
\begin{itemize} 
\item[(i)]  If a prime $p$ is ramified in both $F$ and $F^r$, then it is totally ramified in $K/\QQ$ and $K^r/\QQ$. 
\item[(ii)] If an odd prime $p$ is ramified in $F$ (in $F^r$, respectively) as well as in $F_+$, then~$p$ splits in $F^r$ (in $F$, respectively). Moreover, at least one of the primes above $p$ in $F^r$ is ramified in $K^r/F^r$ (in $K/F$, respectively). 
\end{itemize}
\end{lem}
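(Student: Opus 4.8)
The plan is to pass to the normal closure $N$, write $G=\Gal(N/\QQ)=D_4=\langle x,y\rangle$, and translate every ramification/splitting statement into a statement about the inertia group $I_\q$ and the decomposition group $D_\q\subseteq G$ of a prime $\q$ of $N$ above $p$. Recall $K=N^{\langle x\rangle}$, $K^r=N^{\langle xy\rangle}$, $N_+=N^{\langle y^2\rangle}$, and that $F$, $F^r$, $F_+$ are the three quadratic subfields of $N_+$: the fixed fields in $N_+$ of the images of $\langle x\rangle$, $\langle xy\rangle$, $\langle y\rangle$ in $\Gal(N_+/\QQ)=G/\langle y^2\rangle\cong V_4$. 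Write $\bar I$ for the image of $I_\q$ in $V_4$; it is the inertia group of $\q\cap N_+$. The dictionary is standard: a prime of $N^H$ over $p$ is ramified exactly when $I_\q\not\subseteq H$ and splits completely exactly when $D_\q\subseteq H$, and more precisely $e(\q\cap N^H/p)=[I_\q:I_\q\cap H]$; moreover $I_\q\trianglelefteq D_\q$ with $D_\q/I_\q$ cyclic, and for odd $p$ the group $I_\q$ is cyclic.

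For assertion (i): the hypothesis that $p$ ramifies in both $F$ and $F^r$ reads $I_\q\not\subseteq\langle x,y^2\rangle$ and $I_\q\not\subseteq\langle xy,y^2\rangle$. A quick scan of the subgroup lattice of $D_4$ shows that the only subgroups contained in neither of these two (Klein-four) subgroups are $G$ itself and the cyclic subgroup $\langle y\rangle$. In either case $[I_\q:I_\q\cap\langle x\rangle]=[I_\q:I_\q\cap\langle xy\rangle]=4$, which says precisely that $p$ is totally ramified in $K/\QQ$ and in $K^r/\QQ$. (This uses nothing beyond the group structure; in particular no hypothesis on $p$ or on the CM class number is needed here.)

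For assertion (ii) I would prove the ``$F$'' statement and deduce the ``$F^r$'' statement by applying the ``$F$'' statement to the reflex pair $(K^r,\Phi^r)$, whose subfield/subgroup diagram is obtained from that of $(K,\Phi)$ by the automorphism $x\mapsto xy$, $y\mapsto y^{-1}$ of $G$ (it fixes $y^2$, hence fixes $N_+$ and $F_+$, and interchanges $\langle x\rangle\leftrightarrow\langle xy\rangle$ and $\langle x,y^2\rangle\leftrightarrow\langle xy,y^2\rangle$), together with $(K^r)^r=K$. So let $p$ be odd and ramified in $F$ and in $F_+$; this reads $\bar I\not\subseteq\langle\bar x\rangle$ and $\bar I\not\subseteq\langle\bar y\rangle$, and the only subgroups of $V_4$ contained in neither are $\langle\overline{xy}\rangle$ and $V_4$. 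If $\bar I=V_4$, then $I_\q$ surjects onto $V_4$, contradicting Lemma~\ref{inertia gp. not V4} since $p$ is odd; hence $\bar I=\langle\overline{xy}\rangle$, so $p$ is unramified in $F^r=N_+^{\langle\overline{xy}\rangle}$. Since $I_\q$ is cyclic with image $\langle\overline{xy}\rangle$ in $V_4$, it has order $2$, so $I_\q\in\{\langle xy\rangle,\langle xy^3\rangle\}$; as $I_\q\trianglelefteq D_\q$ and $N_G(\langle xy\rangle)=N_G(\langle xy^3\rangle)=\langle xy,y^2\rangle$, we get $D_\q\subseteq\langle xy,y^2\rangle$, hence $\bar D_\q=\langle\overline{xy}\rangle$, which says $p$ splits (completely) in $F^r$. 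Finally, the $G$-conjugacy class of $I_\q$ is $\{\langle xy\rangle,\langle xy^3\rangle\}$, and both members occur as inertia groups of primes of $N$ over $p$ (because $G$ acts transitively on those primes); choosing a prime $\q'$ over $p$ whose inertia group is the conjugate meeting $\langle xy\rangle$ trivially and setting $\mathfrak{P}'=\q'\cap K^r$, we obtain $e(\mathfrak{P}'/p)=[I_{\q'}:I_{\q'}\cap\langle xy\rangle]=2$, while $e(\mathfrak{P}'\cap F^r/p)=1$ since $p$ is unramified in $F^r$. Therefore $\mathfrak{P}'$ is ramified in $K^r/F^r$, as required.

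The main obstacle is keeping the group-theoretic bookkeeping honest. The decisive point in (ii) is that $\langle xy\rangle$ is \emph{not} normal in $G$: this forbids $D_\q=G$ and so rules out the ``$p$ inert in $F^r$'' alternative; and one must verify that both conjugates $\langle xy\rangle,\langle xy^3\rangle$ genuinely appear as inertia groups of primes over $p$, for otherwise the ramified prime of $K^r/F^r$ need not exist. Everything else is a careful, but routine, application of the correspondence between ramification/splitting and inclusions among inertia and decomposition subgroups.
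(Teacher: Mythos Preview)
Your proof is correct and follows essentially the same approach as the paper's: both pass to the normal closure $N$ and analyze inertia and decomposition groups inside $G=D_4$. Your write-up is more explicit and self-contained (you enumerate the relevant subgroups directly), whereas the paper is terser and defers much of the casework to its ramification Table~\ref{table}. The only noteworthy difference is in the final clause of (ii): the paper argues abstractly that $p$ ramifies in $F$, hence in $K$, hence in $N$, hence in $K^r$ (same normal closure), and therefore some prime of $K^r$ over $p$ is ramified over the unramified $F^r$; you instead exhibit such a prime concretely by observing that the $G$-conjugacy class $\{\langle xy\rangle,\langle xy^3\rangle\}$ of $I_\q$ contains a member meeting $\langle xy\rangle$ trivially, and both conjugates occur as inertia groups since $G$ acts transitively on the primes above $p$. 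Both arguments are valid and short.
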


\begin{proof}   
The statements (i) and (ii) are clear from Table \ref{table} on page \pageref{table}. Alternatively, one can also prove the statements as follows: 
\begin{itemize}
\item[(i)] Let $\p_{N}$ be a prime of $N$ above $p$ that is ramified in both $F/\QQ$ and $F^r/\QQ$. Then the maximal unramified subextension of $N/\QQ$ is contained in $F_+$.   Therefore, the inertia group of $\p_{N}$ contains $\Gal(N/F_+)=\langle y \rangle$. By computing ramification indices in the diagram of subfields one by one, we see that the prime $p$ is totally ramified in $K$ and $K^r$. 

\item[(ii)] Let $p$ be an odd prime that is ramified in $F/\QQ$ and $F_+/\QQ$ and $\p_{N}$ be a prime above $p$ in $N$. The inertia group of an odd prime cannot be a biquadratic group by Lemma \ref{inertia gp. not V4}, so $\I_{\p_N}$ is a proper subgroup of $\Gal(N/F^r)$. Since~$\I_{\p_N}$ is a normal subgroup in $\D_{\p_N}$, the group $\D_{\p_N}$ cannot be the full Galois group $\Gal(N/\QQ)$. So~$\D_{\p_N}$ is a proper subgroup of $\Gal(N/F^r)$ and hence $p$ splits in $F^r$. Moreover, since~$p$ is ramified in $F$, hence in $K$,  hence in $K^r$, at least one of the primes above $p$ in $F^r$ is ramified in $K^r$. Since $F$ and $F^r$ are symmetric in $N/\QQ$, the same argument holds for $F^r$ as well. \end{itemize} \end{proof}

\begin{lem} \label{ram2} Let the notation be as in Lemma~\ref{ram1}. Assuming \linebreak $I_{0}(\Phi^{r})=I_{K^{r}}$, if $K^r$ has a prime $\p$ of prime norm $p$ with~$\overline{\p}=\p$, then we have $F=\QQ(\sqrt{p})$.
\end{lem}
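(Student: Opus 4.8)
\textbf{Proof plan for Lemma \ref{ram2}.}

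The plan is to exploit the assumption $I_0(\Phi^r) = I_{K^r}$ by feeding the ideal $\p$ into the type norm and extracting arithmetic information about its norm $p$. Since $\p \in I_{K^r} = I_0(\Phi^r)$, by definition of $I_0(\Phi^r)$ we have $\N_{\Phi^r}(\p) = (\alpha)$ for some $\alpha \in K^\times$ with $\N_{K^r/\QQ}(\p) = \alpha\overline{\alpha}$. Here $\N_{K^r/\QQ}(\p) = p$ since $\p$ has prime norm $p$. So the first step is to record that $p = \alpha\overline{\alpha} = \N_{K/\QQ}(\alpha)$ in $\ZZ$, which shows $p$ factors in a controlled way in $K$; I would examine what $(\alpha)$ looks like as an ideal of $K$. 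Using $\overline{\p} = \p$ and the formula $\N_{\Phi^r}\overline{\N_{\Phi^r}} = \N_{K^r/\QQ}$ (applied to ideals), the ideal $(\alpha)\overline{(\alpha)} = \N_{\Phi^r}(\p)\N_{\Phi^r}(\overline{\p}) = \N_{\Phi^r}(\p^2) = (p)$ in $\OO_K$, consistent with $\alpha\overline{\alpha} = p$.

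Next I would analyze the prime $p$ in $K$ via the factorization $(\alpha)$. Since $\N_{K/\QQ}(\alpha) = p$ is prime, the ideal $(\alpha)$ is a prime ideal $\q$ of $K$ of residue degree $1$ lying over $p$, and $(\overline{\alpha}) = \overline{\q}$ with $\q\overline{\q} = (p)$. I would then argue $\q \neq \overline{\q}$: if $\q = \overline{\q}$ then $\q^2 = (p)$, which forces $p$ to ramify somewhere; more importantly I want to pin down that $p$ ramifies in $F$. The key point is to compare the behavior of $p$ in $F$ (the real quadratic subfield of $K$) with what the existence of $\alpha$ forces. Since $\alpha\overline\alpha = p$ and $\alpha/\overline\alpha$ generates an ideal that is a ratio of conjugate primes, one sees that $p$ cannot be inert in $K/F$ applied to $F$, and in fact the norm-$p$ element $\alpha$ together with $\alpha\overline\alpha = p$ shows that $\alpha + \overline\alpha \in F$ and $\alpha\overline\alpha = p \in \QQ$, so $\alpha$ satisfies $x^2 - (\alpha+\overline\alpha)x + p = 0$ over $F$; combined with the ideal relations this forces $p$ to be ramified in $F$, i.e.\ $F = \QQ(\sqrt{p})$ (noting $F$ is a real quadratic field, so it is $\QQ(\sqrt{d})$ for a unique squarefree $d$, and ramification of $p$ plus the constraints pins down $d = p$ up to the usual congruence subtleties, which I would handle by a direct discriminant computation).

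The main obstacle I anticipate is the last step: going from "$p$ ramifies in $F$" to the precise conclusion "$F = \QQ(\sqrt{p})$", since a priori $p$ ramifying in a real quadratic field $\QQ(\sqrt{d})$ only tells us $p \mid d$ (or $p = 2$ with $d \equiv 2,3 \bmod 4$), not $d = p$. To close this gap I would need to use more of the structure: the relation $\alpha\overline\alpha = p$ says $p$ has a generator of its norm form in $K = F(\sqrt{-\beta})$, so $p\OO_F$ must itself be (up to the ramified prime) a principal prime that becomes a square in $K$, and tracking the exact power of $p$ dividing $d_F$ versus $d_K/d_F$ — together with the observation from Lemma \ref{ram1}(i)–(ii) about how ramified primes in $F$ interact with $F_+$ and $F^r$ — should rule out any other prime dividing $d_F$. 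Concretely, I expect the cleanest route is: $\q = (\alpha)$ with $\q\overline\q = (p)$ and $\q$ of degree $1$; if $p$ were to split in $F$ then $\q$ and $\overline\q$ would lie over distinct primes of $F$, contradicting $\q\overline\q = (p)$ being the full extension of $p\OO_F$ unless that product telescopes correctly — a short case analysis on the splitting type of $p$ in $F$ (split / inert / ramified) eliminates all but the ramified case, and then a look at $d_F$ shows $d_F$ is (a power of) $p$, whence $F = \QQ(\sqrt{p})$.
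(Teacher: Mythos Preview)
Your proposal misses the one-line idea that makes the lemma immediate, and in its place substitutes an argument with a real error and an unclosable gap.

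The key observation you overlook is that $\overline{\p}=\p$ gives $(\overline{\alpha})=\overline{\N_{\Phi^r}(\p)}=\N_{\Phi^r}(\overline{\p})=\N_{\Phi^r}(\p)=(\alpha)$, i.e.\ the \emph{equality} of ideals $(\alpha)=(\overline{\alpha})$, not merely $(\alpha)(\overline{\alpha})=(p)$. From this, $\alpha/\overline{\alpha}\in\OO_K^\times$ has absolute value~$1$ under every archimedean place, hence lies in $\mu_K=\{\pm 1\}$; so $\alpha=\pm\overline{\alpha}$ and $\alpha^2=\pm\alpha\overline{\alpha}=\pm p$. The sign $-p$ is excluded because $K$ has no imaginary quadratic subfield, so $\alpha^2=p$, $\sqrt{p}\in K$, and being totally real, $\sqrt{p}\in F$. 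That is the whole proof.

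Your alternative route breaks down at two points. First, the claim ``$\N_{K/\QQ}(\alpha)=p$'' is false: $\alpha\overline{\alpha}=p$ is $\N_{K/F}(\alpha)=p$, whence $\N_{K/\QQ}(\alpha)=\N_{F/\QQ}(p)=p^2$. So $(\alpha)$ need not be a degree-$1$ prime of $K$, and the splitting-type case analysis built on that assumption collapses. Second, even granting that $p$ ramifies in $F$, the step ``a look at $d_F$ shows $d_F$ is a power of $p$'' is exactly the gap you yourself flag, and it cannot be filled here: nothing in the hypotheses prevents another prime from dividing $d_F$, and appealing to Lemma~\ref{ram1} or Proposition~\ref{propo1} would be circular, since those results feed into (or through Corollary~\ref{totram} depend on) the present lemma. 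The hypothesis $\overline{\p}=\p$ is there precisely to hand you $(\alpha)=(\overline{\alpha})$; use it.
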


\begin{proof} By the assumption,  we have
       \[\N_{\Phi^{r}}(\p)=(\alpha)\ \text{for some}\ \alpha\in K^\times \text{such that}\ \alpha\overline{\alpha}=\N_{K^r/\QQ}(\p)=p.\] 
Since $\overline{\p}=\p$, we have $(\alpha)=(\overline{\alpha})$, and so
$\alpha=\epsilon\overline{\alpha}$ for a unit $\epsilon$ in $\OO^{\times}_{K}$
with $\epsilon\overline{\epsilon}=1$, hence a root of unity.
Since $\mu_K=\{\pm1\}$, we get $\alpha^{2}=\pm p$.
The case $\alpha^{2}=-p$ is not possible, since $K$ has no
imaginary  quadratic intermediate field.
Hence we have~$\alpha^{2}=p$ and so $\sqrt{p}\in F$. 
\end{proof}

\begin{cor}\label{totram} The notation being as in Lemma~\ref{ram1}, suppose \linebreak$I_{0}(\Phi^{r})=I_{K^{r}}$.
If $p$ is totally ramified in $K^r/\QQ$, or splits in $F^r/\QQ$
and at least one of the primes over $p$ in $F^r$ ramifies in $K^r/F^r$,
then we have~$F=\QQ(\sqrt{p})$. \qed
\end{cor}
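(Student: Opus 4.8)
The plan is to reduce both hypotheses to a single situation handled by Lemma~\ref{ram2}: in each case I will exhibit a prime $\p$ of $K^r$ with $\N_{K^r/\QQ}(\p)=p$ and $\overline{\p}=\p$, and then Lemma~\ref{ram2} immediately gives $F=\QQ(\sqrt{p})$. (Note that $F$ is indeed a real quadratic field, being the real quadratic subfield of a quartic CM field, so the conclusion $F=\QQ(\sqrt{p})$ is consistent and Lemma~\ref{ram2} applies verbatim.)

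\textbf{Totally ramified case.} Suppose $p$ is totally ramified in $K^r/\QQ$. Then there is a unique prime $\p$ of $K^r$ above $p$, and it has ramification index $4$ and residue degree $1$, so $\N_{K^r/\QQ}(\p)=p$. Complex conjugation is a $\QQ$-automorphism of $K^r$, hence permutes the set of primes of $K^r$ above $p$; since that set is a singleton, $\overline{\p}=\p$. Apply Lemma~\ref{ram2}.

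\textbf{Split case.} Suppose $p$ splits in $F^r/\QQ$, write $p\OO_{F^r}=\q_1\q_2$ with $\q_1\neq\q_2$ and $\N_{F^r/\QQ}(\q_i)=p$, and suppose without loss of generality that $\q_1$ ramifies in $K^r/F^r$. Since $K^r/F^r$ is quadratic, there is a unique prime $\p$ of $K^r$ above $\q_1$, with $e(\p/\q_1)=2$ and $f(\p/\q_1)=1$; hence $\N_{K^r/\QQ}(\p)=\N_{F^r/\QQ}(\q_1)=p$. Now $F^r$ is totally real, so complex conjugation restricts to the identity on $F^r$ and in particular fixes $\q_1$; therefore $\overline{\p}$ is again a prime of $K^r$ above $\q_1$, and by uniqueness $\overline{\p}=\p$. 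Apply Lemma~\ref{ram2}.

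There is no genuinely hard step here; the only points requiring care are the two justifications of $\overline{\p}=\p$, namely that complex conjugation fixes $F^r$ pointwise (because $F^r$ is totally real) and that a totally ramified rational prime, respectively a prime of $F^r$ ramifying in the quadratic extension $K^r/F^r$, has exactly one prime above it in $K^r$. Both are standard, so the corollary follows formally once these are spelled out.
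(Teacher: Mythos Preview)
Your proof is correct and is exactly the argument the paper leaves implicit behind its bare \qed: in each case you produce a prime $\p$ of $K^r$ of norm $p$ fixed by complex conjugation and invoke Lemma~\ref{ram2}. There is nothing to add or change.
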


\begin{prop} \label{prop:sqrtp} Suppose $I_{0}(\Phi^{r})=I_{K^r}$. Then $F=\QQ(\sqrt{p})$, where $p$ is a rational prime.
\end{prop}

\begin{proof}
Suppose that there is an odd prime $p$ that is ramified in $F$. Then~$p$ is ramified either in $F$ and $F^r$ or in $F$ and $F_+$.

If $p$ is ramified in both $F$ and $F^r$, then by Lemma \ref{ram1}-(i), the prime~$p$ is totally ramified in~$K^r/\QQ$. If $p$ is ramified in $F$ and $F_+$, then by Lemma \ref{ram1}-(ii), the prime~$p$ splits in $F^r$ and at least one of the primes over $p$ in $F^r$ ramifies in $K^r/F^r$. In both cases, Corollary~\ref{totram} tells us that  $F=\QQ(\sqrt{p})$. 

Therefore, if an odd prime $p$ is ramified in $F$, then we have $F=\QQ(\sqrt{p})$. If no odd prime ramifies in $F$, then the only prime that ramifies in $F$ is~$2$ so we have~$F=\QQ(\sqrt{2})$.
\end{proof}

\begin{lem}\label{ram3} Suppose $I_{0}(\Phi^{r})=I_{K^{r}}$. Then the following assertions are true. 
\begin{itemize}
\item[(i)] If a rational prime $l$ is unramified in both $F/\QQ$ and $F^r/\QQ$, but is ramified in~$K/\QQ$ or $K^r/\QQ$, then all primes above $l$ in $F$ and $F^r$ are ramified in $K/F$ and $K^{r}/F^{r}$ and $l$ is inert in $F^r$. 
	
\item[(ii)] If $F=\QQ(\sqrt{p})$ with a prime number $p\equiv 3\ (\mod\ 4)$, then $2$ is inert in $F^r$.
\end{itemize} 
\end{lem}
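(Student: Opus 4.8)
The statement to prove is Lemma \ref{ram3}, which has two parts. Both parts should follow by combining the global structural constraints from Proposition \ref{propo1} (namely $F=\QQ(\sqrt p)$ for a single prime $p$) and Lemma \ref{ram2}/Corollary \ref{totram} (which force $F=\QQ(\sqrt p)$ whenever a prime of $K^r$ of prime norm is fixed by complex conjugation) with the ramification-group analysis of Lemmas \ref{inertia gp. not V4} and \ref{ram1}, applied now to a prime $l$ whose behavior in $F$ and $F^r$ is \emph{unramified}.

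For part (i), I would start by fixing a prime $\p_N$ of $N$ above $l$ and analyzing its inertia group $\I_{\p_N}$ inside $G=D_4$. Since $l$ is unramified in both $F=N^{\langle x,y^2\rangle}$ and $F^r=N^{\langle xy,y^2\rangle}$, the inertia group is contained in $\langle x,y^2\rangle\cap\langle xy,y^2\rangle=\langle y^2\rangle$; on the other hand $l$ ramifies in $K$ or $K^r$, so $\I_{\p_N}$ is nontrivial, hence $\I_{\p_N}=\langle y^2\rangle$, the center. (If $l$ is odd one must also rule out larger possibilities, but any inertia group mapping onto $V_4$ is excluded by Lemma \ref{inertia gp. not V4}, and an inertia group strictly between $\langle y^2\rangle$ and $G$ would force ramification in $F$ or $F^r$; for $l=2$ the same conclusion $\I_{\p_N}=\langle y^2\rangle$ holds because any nontrivial inertia compatible with $l$ unramified in both quadratic subfields must lie in $\langle y^2\rangle$.) Since $\langle y^2\rangle$ is central, it is normal in the decomposition group $\D_{\p_N}$, and $\D_{\p_N}/\I_{\p_N}$ is cyclic; the only subgroups of $G$ properly containing the central $\langle y^2\rangle$ with cyclic quotient are $\langle y\rangle$, $\langle x,y^2\rangle$, $\langle xy,y^2\rangle$, and $G$ itself — but $G/\langle y^2\rangle\cong V_4$ is not cyclic, so $\D_{\p_N}\in\{\langle y^2\rangle,\langle y\rangle,\langle x,y^2\rangle,\langle xy,y^2\rangle\}$. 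Reading off ramification indices in the subfield diagram, $\I_{\p_N}=\langle y^2\rangle$ forces every prime above $l$ in $F$ to ramify in $K$ and every prime above $l$ in $F^r$ to ramify in $K^r$ (since $\langle y^2\rangle$ is not contained in $\langle x\rangle$ or $\langle xy\rangle$). Now the residue degree: a ramified prime $\p$ of $K^r$ over $l$ has $\N_{K^r/\QQ}(\p)=l^{f}$; I want $f=1$ and $\overline\p=\p$ so that Corollary \ref{totram} applies. The identity $\overline{\cdot}=y^2=\I_{\p_N}$ acts trivially on the completion of $K^r$ at $\p$ (the decomposition data is invariant under $\I_{\p_N}$), giving $\overline\p=\p$; and if $l$ were split in $F^r$ then $\D_{\p_N}\subset\Gal(N/F^r)=\langle xy,y^2\rangle$, a prime $\p$ of $K^r$ over $l$ would have prime norm $l$, and Corollary \ref{totram} would give $F=\QQ(\sqrt l)$ — but $l$ is unramified in $F$, contradiction (here one uses $l$ ramifies in $K^r/F^r$, which we just established, or symmetrically the $K/F$ side). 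Hence $l$ is inert in $F^r$, as claimed; the statement about all primes above $l$ being ramified then follows because there is a unique prime above $l$ in each of $F,F^r$.

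For part (ii), take $F=\QQ(\sqrt p)$ with $p\equiv3\pmod 4$, so $2$ ramifies in $F/\QQ$ (the discriminant of $\QQ(\sqrt p)$ is $4p$). I would argue by contradiction: suppose $2$ is not inert in $F^r$, i.e. $2$ is ramified or split in $F^r$. If $2$ ramifies in $F^r$, then $2$ ramifies in both $F$ and $F^r$, so by Lemma \ref{ram1}-(i) it is totally ramified in $K^r/\QQ$; if $2$ splits in $F^r$, I would show using the $D_4$-ramification analysis (the inertia group over $2$ is nontrivial since $2$ ramifies in $F$, is contained in $\langle x,y^2\rangle$, and is disjoint from $\langle xy,y^2\rangle$ since $2$ is unramified in $F^r$, forcing $\I_{\p_N}=\langle x\rangle$ or a subgroup thereof with the decomposition group landing inside $\Gal(N/F^r)$) that at least one prime above $2$ in $F^r$ ramifies in $K^r/F^r$. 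In either case Corollary \ref{totram} yields $F=\QQ(\sqrt 2)$, contradicting $p\equiv3\pmod4$ (or more directly contradicting $p$ odd). Therefore $2$ is inert in $F^r$.

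**Main obstacle.** The delicate point is the case analysis on inertia and decomposition groups inside $D_4$ — especially handling $l=2$ in part (i) and the split-in-$F^r$ subcase in part (ii), where one must carefully track which prime of $K^r$ over $l$ has prime norm and verify the hypotheses of Corollary \ref{totram} (the prime-norm condition and one of the primes ramifying in $K^r/F^r$) rather than just wave at "symmetry". Getting the residue-degree bookkeeping exactly right, and confirming $\overline\p=\p$ from centrality of $\I_{\p_N}$, is where the real work lies; the rest is assembling Propositions \ref{propo1}, \ref{louboutin lower bound}-style inputs are not needed here, and Corollary \ref{totram}.
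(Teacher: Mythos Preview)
Your approach is essentially the paper's: for (i) you identify $\I_{\p_N}=\langle y^2\rangle$ explicitly (the paper does this by pointing to rows (1)--(4) of Table~\ref{table}) and then rule out $l$ split in $F^r$ via Corollary~\ref{totram}; for (ii) you run the same ramified/split dichotomy on $2$ in $F^r$, reducing both branches to Corollary~\ref{totram}.

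One slip to correct in your part (ii), split subcase: you write that the inertia group ``is contained in $\langle x,y^2\rangle$, and is disjoint from $\langle xy,y^2\rangle$ since $2$ is unramified in $F^r$'', but the containments are reversed. Unramified in $F^r=N^{\langle xy,y^2\rangle}$ means $\I_{\p_N}\subset\langle xy,y^2\rangle$, while ramified in $F=N^{\langle x,y^2\rangle}$ means $\I_{\p_N}\not\subset\langle x,y^2\rangle$; hence $\I_{\p_N}\in\{\langle xy\rangle,\langle xy^3\rangle,\langle xy,y^2\rangle\}$, not $\langle x\rangle$. Your conclusion (some prime of $F^r$ over $2$ ramifies in $K^r/F^r$) is still correct, but the paper reaches it without any group-theoretic case analysis: since $2$ ramifies in $F$ it ramifies in $K$, hence in $N$, hence in $K^r$; as $2$ is assumed unramified in $F^r$, at least one prime of $F^r$ above $2$ must ramify in $K^r/F^r$, and Corollary~\ref{totram} applies.
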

\begin{proof}
(i). 
Under the assumption $I_{0}(\Phi^{r})=I_{K^{r}}$
(see the final column of Table~\ref{table}),
the only possible decomposition types for a prime that is
ramified in $K$ and unramified in both $F$ and $F^r$ are (2) and (3) in  Table \ref{table}.
Hence the statement follows. 

(ii).
The prime $2$ is ramified in $F$ since $p\equiv 3\ (\mod\ 4)$.
By Table~\ref{table}, we see that if $2$ is ramified or split in $F^r$, then we have $\sqrt{2}\in F$, a contradiction. This implies that $2$ is inert in $F^r$.
\end{proof}

\newpage \label{key}
\begin{landscape}
\begin{table}[h]
\caption{Ramification table of a non-normal quartic CM field}\label{table} 
\begin{center} 
\resizebox{\columnwidth}{!}{
\renewcommand{\arraystretch}{1.6}
\begin{tabular}{|c|c|c|c|c|c|c|c|c|c|c|}  
\hline
 Case & $\I$ & $\D$ & {decomp. of $p$ in $N$} & {decomp. of $p$ in $K$} & {decomp. of $p$ in $F$} & {decomp. of $p$ in $F_+$} &{decomp. of $p$ in $F^r$} & {decomp. of $p$ in $K^r$} & $N_{\Phi^r}(\p_{K^{r},1})$ & $\sqrt{p}\in F$  \\  
 \noalign{\hrule height 2pt} 
(1)* &  $\langle y^2\rangle$ 
& $\langle y^2\rangle$ 
& $\p^{2}_{N,1} \p^{2}_{N,x} \p^{2}_{N,y} \p^{2}_{N,xy}$ 
&$\p^{2}_{K,1} \p^{2}_{K,y}$
&$\p_{F,1} \p_{F,y}$
&$\p_{F_+,1} \p_{F_+,y}$
&$\p_{F^{r},1} \p_{F^{r},y}$
&$\p^{2}_{K^{r},1} \p^{2}_{K^{r},y}$
& $\p_{K,1} \p_{K,y}$
& \checkmark \\
\hline 
(2) & $\langle y^2\rangle$ 
& $\langle y\rangle$ 
& $\p^{2}_{N,1} \p^{2}_{N,y}$ 
&$\p^{2}_{K,1}$
&$\p_{F,1}$
&$\p_{F_+,1} \p_{F_+,y}$
&$\p_{F^{r},1}$
&$\p^{2}_{K^{r},1}$
&$p$ 
&  \\ 
\hline 
(3) & $\langle y^2\rangle$ 
& $\langle x,\ y^2\rangle$ 
& $\p^{2}_{N,1} \p^{2}_{N,y}$ 
&$\p^{2}_{K,1} \p^{2}_{K,y}$ 
&$\p_{F,1} \p_{F, y}$ 
&$\p_{F_+,1}$
&$\p_{F^{r},1}$
&$\p^{2}_{K^{r},1}$
&$p$
& \\ 
 \hline 
(4)* & $\langle y^2\rangle$ 
&$\langle xy,\ y^2\rangle$ 
&$\p^{2}_{N,1} \p^{2}_{N,y}$ 
&$\p^{2}_{K,1}$
&$\p_{F,1}$
&$\p_{F_+,1}$
&$\p_{F^{r},1} \p_{F^{r},y}$
&$\p^{2}_{K^{r},1} \p^{2}_{K^{r},y}$
&$\p_{K,1}$
& \checkmark\\ 
\hline 
\multirow{2}{*}{(5)} 
\,(a) & $\langle x\rangle$ 
& $\langle x\rangle$ 
& $\p^{2}_{N,1} \p^{2}_{N,y}\p^{2}_{N,y^2}\p^{2}_{N,y^3}$ 
& $\p_{K,1}\p^{2}_{K,y}\p_{K,y^2}$
& $\p_{F,1}\p_{F,y}$
&$\p_{F_+,1}^2$
& $\p^{2}_{F^{r},1}$
& $\p^{2}_{K^{r},1}\p^{2}_{K^{r},y^2}$
&$\p_{K,1}\p_{K,y}$
& \\ 
\cdashline{2-11} 
\quad \quad (b) & $\langle xy^2\rangle$ 
& $\langle xy^2\rangle$ 
& $\p^{2}_{N,1}\p^{2}_{N,y}\p^{2}_{N,y^2}\p^{2}_{N,y^3}$ 
& $\p^{2}_{K,1}\p_{K,y}\p_{K,y^3}$
& $\p_{F,1}\p_{F,y}$
&$\p_{F_+,1}^2$
& $\p^{2}_{F^{r},1}$
& $\p^{2}_{K^{r},1}\p^{2}_{K^{r},y}$
&$\p_{K,1}\p_{K,y^3}$
&  \\ 
\hline 
\multirow{2}{*}{(6)}
\,(a) & $\langle x\rangle$ 
& $\langle x,\ y^2\rangle$ 
&$\p^{2}_{N,1}\p^{2}_{N,y}$ 
&$\p_{K,1}\p^{2}_{K,y}$
&$\p_{F,1}\p_{F,y}$
&$\p_{F_+,1}^2$
&$\p^{2}_{F^{r},1}$
&$\p^{2}_{K^{r},1}$
&$p$
&  \\ 
\cdashline{2-11} 
\quad \quad (b) & $\langle xy^2\rangle$ 
&$\langle x, y^2\rangle$ 
&$\p^{2}_{N,1}\p^{2}_{N,y}$ 
&$\p^{2}_{K,1}\p_{K,y}$
&$\p_{F,1}\p_{F,y}$
&$\p_{F_+,1}^2$
&$\p^{2}_{F^{r},1}$
&$\p^{2}_{K^{r},1}$
&$p$
& \\ 
\hline 
\multirow{2}{*}{(7)}
\,(a) & $\langle xy\rangle$ 
& $\langle xy\rangle$ 
& $\p^{2}_{N,1}\p^{2}_{N,y}\p^{2}_{N,y^2}\p^{2}_{N,y^3}$ 
& $\p^{2}_{K,1}\p^{2}_{K,y^3}$
& $\p^{2}_{F,1}$
&$\p_{F_+,1}^2$
& $\p_{F^{r},1}\p_{F^{r},y}$
& $\p^{2}_{K^{r},1}\p_{K^{r},y}\p_{K^{r},y^3}$
&$\p_{K,1}\p_{K,y^3}$
& \checkmark\\ 
\cdashline{2-11}
\quad \quad (b) & $\langle xy^3\rangle$ 
& $\langle xy^3\rangle$ 
& $\p^{2}_{N,1}\p^{2}_{N,y}\p^{2}_{N,y^2}\p^{2}_{N,y^3}$ 
& $\p^{2}_{K,1}\p^{2}_{K,y}$
& $\p^{2}_{F,1}$
&$\p_{F_+,1}^2$
& $\p_{F^{r},1}\p_{F^{r},y}$
& $\p_{K^{r},1}\p^{2}_{K^{r},y}\p_{K^{r},y^2}$
& $\p^{2}_{K,1}$
& \checkmark\\ 
\hline 
\multirow{2}{*}{(8)}
\,(a) & $\langle xy\rangle$ 
& $\langle xy, y^2\rangle$ 
&$\p^{2}_{N,1}\p^{2}_{N,y}$ 
&$\p^{2}_{K,1}$
&$\p^{2}_{F,1}$
&$\p_{F_+,1}^2$
&$\p_{F^{r},1}\p_{F^{r},y}$
&$\p^{2}_{K^{r},1}\p_{K^{r},y}$
&$\p_{K,1}$
& \checkmark \\ 
\cdashline{2-11}
\quad \quad (b) &$\langle xy^3\rangle$ 
&$\langle xy, y^2\rangle$ 
&$\p^{2}_{N,1}\p^{2}_{N,y}$ 
&$\p^{2}_{K,1}$
&$\p^{2}_{F,1}$
&$\p_{F_+,1}^2$
&$\p_{F^{r},1}\p_{F^{r},y}$
&$\p_{K^{r},1}\p^{2}_{K^{r},y}$
&$p$
& \checkmark \\ 
\hline 
(9) & $\langle y\rangle$ 
&$\langle y\rangle$ 
&$\p^{4}_{N,1}\p^{4}_{N,y}$ 
&$\p^{4}_{K,1}$
&$\p^{2}_{F,1}$
&$\p_{F_+,1}\p_{F_+,y}$
&$\p^{2}_{F^{r},1}$
&$\p^{4}_{K^{r},1}$
&$\p^{2}_{K,1}$
& \checkmark \\ 
\hline 
(10) & $\langle y\rangle$ 
& $G$ 
&$\p^{4}_{N,1}$ 
&$\p^{4}_{K,1}$
&$\p^{2}_{F,1}$
&$\p_{F_+,1}$
&$\p^{2}_{F^{r},1}$
&$\p^{4}_{K^{r},1}$
&$\p^{2}_{K,1}$
& \checkmark \\ 
\hline 
(11)* & $\langle x, y^2\rangle$ 
& $\langle x, y^2\rangle$ 
&$\p^{4}_{N,1}\p^{4}_{N,y}$ 
&$\p^{2}_{K,1}\p^{2}_{K,y}$
&$\p_{F,1}\p_{F,y}$
&$\p^{2}_{F_+,1}$
&$\p^{2}_{F^{r},1}$
&$\p^{4}_{K^{r},1}$
&$\p_{K,1}\p_{K,y}$
& \checkmark \\ 
\hline 
(12)* & $\langle x, y^2\rangle$ 
& $G$ 
&$\p^{4}_{N,1}$ 
&$\p^{2}_{K,1}$
&$\p_{F,1}$
&$\p_{F_+,1}^2$
&$\p^{2}_{F^{r},1}$
&$\p^{4}_{K^{r},1}$
&$\p_{K,1}$
& \checkmark \\ 
\hline 
(13) & $\langle xy, y^2\rangle$ 
& $\langle xy, y^2\rangle$ 
&$\p^{4}_{N,1}\p^{4}_{N,y}$ 
&$\p^{4}_{K,1}$
&$\p^{2}_{F,1}$
&$\p^{2}_{F_+,1}$
&$\p_{F^{r},1}\p_{F^{r},y}$
&$\p^{2}_{K^{r},1}\p^{2}_{K^{r},y}$
&$\p^{2}_{K,1}$
& \checkmark \\
\hline 
(14) & $\langle xy,y^2\rangle$ 
& $G$ 
&$\p^{4}_{N,1}$ 
&$\p^{4}_{K,1}$
&$\p^{2}_{F,1}$
&$\p_{F_+,1}^2$
&$\p_{F^{r},1}$
&$\p^{2}_{K^{r},1}$
&$p$
&  \\ 
\hline 
(15) & $G$ 
& $G$ 
&$\p^{8}_{N,1}$ 
&$\p^{4}_{K,1}$
&$\p^{2}_{F,1}$
&$\p^{2}_{F_+,1}$
&$\p^{2}_{F^{r},1}$
&$\p^{4}_{K^{r},1}$
&$\p^{2}_{K,1}$
& \checkmark \\ 
\hline
\end{tabular}
}
\end{center}
\mbox{}

\caption*{This table lists all $19$ pairs $(\I,\D)$ where $1\neq \I \triangleleft \D \leq D_4 = \langle x,\ y \rangle$ and $\D/\I$ is cyclic, partitioned into $15$ conjugacy classes (1) -- (15). In particular, it contains all possible inertia and decomposition groups of ramified primes of $N$. This table is a corrected subset of \cite[Table 3.5.1]{goren}. We restricted to $\I\neq 1$, added the case 8-(b), which is missing in \cite[Table 3.5.1]{goren}, and corrected the type norm column of some cases. The cases (11) -- (15) can only occur for the prime $2$, see Lemma \ref{inertia gp. not V4}. If there is a checkmark in the last column, then by Lemma~\ref{ram2}, such splitting implies $\sqrt{p}\in F$ (i.e., $F=\QQ(\sqrt{p})$) under the assumption $I_{0}(\Phi^{r})=I_{K^{r}}$. The cases with~* do not occur under the assumption $I_{0}(\Phi^{r})=I_{K^{r}}$ because $p$ is not ramified in $F$ in these cases, but on the other hand $\sqrt{p}\in F$ by Lemma \ref{ram2}.}
\end{table}
\end{landscape}

\subsection{\texorpdfstring{\textbf{Equality of $t_K$ and $t_{K^r}$}}{Equality of tK and tKr}}

In the previous section, we proved that the primes that are unramified in~$F$ and~$F^r$, but are ramified in $K^r/F^r$ are inert in $F^r$. Thus these primes contribute to $t_{K^r}$ (the number of primes in $F^r$ that are ramified in~$K^r$) with one prime, on the other hand they contribute to $t_K$ (the number of primes in $F$ that are ramified in $K$) with at least one prime and exactly two if the prime splits in $F/\QQ$. So if we could prove $t_K=t_{K^r}$, then that would approximately say that all such primes are inert in both $F$ and $F^r$.  

\begin{prop}[{Shimura, \cite[Proposition A.7.]{shimura2}}]\label{rel.cl.eq.}
	Let $K$ be a non-normal quartic CM field and $\Phi$ a (primitive)
CM type of~$K$. Let
$K^r$ be the reflex field of $(K,\Phi)$.
	Then we have $h^{*}_K = h^{*}_{K^r}$.
\end{prop}
\begin{proof} The idea of the proof is to first show \begin{equation}\label{zeta}\zeta_K(s)/\zeta_F(s)=\zeta_{K^r}(s)/\zeta_{F^r}(s)\end{equation} and then use the analytic class number formula at $s=0$.  Louboutin~\cite[Theorem~A]{loub5} shows the equality (\ref{zeta}) by writing the Dedekind zeta functions of $K$,~$K^r$,~$F$ and $F^r$ as a product of Artin $L$-functions and finding relations between these combinations of $L$-functions (see \cite[Theorem~A]{loub5}). 

We can also get this equality by comparing the local factors of the Euler products of the Dedekind $\zeta$-functions of the fields. By Table~\ref{table}, we see that each ramified prime in $N/\QQ$ has the same factors in the Euler products of the quotients of the Dedekind $\zeta$-functions on both sides of (\ref{zeta}). As an example, we take a rational prime~$p$ with ramification type (6 a) in Table \ref{table}, where the local factors for $p$ of the Dedekind $\zeta$-functions are as follows:
\begin {align*}\zeta_K(s)_{p}&= \frac{1}{1-\N{\p_{K,1}}^{-s}}\cdot\frac{1}{1-\N{\p_{K,y}}^{-s}}=\frac{1}{1-(p^2)^{-s}}\cdot \frac{1}{1-{p}^{-s}},\\
\zeta_F(s)_{p}&=  \frac{1}{1-\N{\p_{F,1}}^{-s}}\cdot \frac{1}{1-\N{\p_{F,y}}^{-s}}=\bigg ( \frac{1}{1-{p}^{-s}}\bigg )^2,\\
\zeta_{K^r}(s)_{p}&= \frac{1}{1-\N{\p_{K^r,1}}^{-s}}=\frac{1}{1-(p^2)^{-s}},\\
\zeta_{F^r}(s)_{p}&= \frac{1}{1-\N{\p_{K^r,1}}^{-s}}= \frac{1}{1-{p}^{-s}}.
\end{align*}
So for such a prime, we get \[\zeta_K(s)_{p}/\zeta_F(s)_{p} = \frac{1}{1+{p}^{-s}} = \zeta_{K^r}(s)_{p}/\zeta_{F^r}(s)_{p}.\]
Similarly, by using Table 3.5.1 in \cite{goren}, we can get this equality for the unramified primes as well. 
This gives an alternative proof of~\eqref{zeta}.

The analytic class number formula at $s = 0$ (see, \cite[Chapter 4]{wash}) says that the Dedekind zeta function $\zeta_M(s)$ of an algebraic number field~$M$ has a zero at $s=0$ and the derivative of $\zeta_M(s)$ at $s=0$ has the value \label{analyticclassnumber}
\[-\frac{h_M\cdot R_M}{\#\mu_M},\]
where $h_M$ is the class number, $R_M$ is the regulator, and
$\mu_M$ is the group of roots of unity.  

Since $\#\mu_K=2=\#\mu_F$ and $R_K=2R_F$ (see Washington, \cite[Proposition 4.16]{wash}), the analytic class number formula at $s = 0$ gives
\begin{align*} \lim_{s\rightarrow 0} \frac{ \zeta_K(s)}{\zeta_F(s)} =2h^*_{K}.
\end{align*}
Therefore, the equality of $h_K^*$ and $h^*_{K^r}$ follows from the identity (\ref{zeta}).
\end{proof}
Recall that $t_K$ (respectively $t_{K^r}$)
is the number of primes
of $F$ (respectively $F^r$) that ramify in $K$ (respectively $K^r$).
\begin{cor}\label{equal} In the situation of Proposition~\ref{rel.cl.eq.}, if
	 $I_{0}(\Phi^{r})=I_{K^{r}}$, then we have $t_{K}=t_{K^r}$. 
\end{cor}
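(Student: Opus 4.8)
The plan is to combine Proposition~\ref{prop2} with Proposition~\ref{rel.cl.eq.}. By Proposition~\ref{prop2}, the hypothesis $I_{0}(\Phi^{r})=I_{K^{r}}$ gives both $h^{*}_{K}=2^{t_K-1}$ and $h^{*}_{K^r}=2^{t_{K^r}-1}$. On the other hand, Proposition~\ref{rel.cl.eq.} gives $h^{*}_{K}=h^{*}_{K^r}$ unconditionally. Putting these three equalities together yields $2^{t_K-1}=2^{t_{K^r}-1}$, hence $t_K=t_{K^r}$.

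Concretely, I would write: assume $I_{0}(\Phi^{r})=I_{K^{r}}$. Then $2^{t_K-1}=h^{*}_{K}=h^{*}_{K^r}=2^{t_{K^r}-1}$, where the outer equalities are Proposition~\ref{prop2} and the middle one is Proposition~\ref{rel.cl.eq.}. Since the function $n\mapsto 2^{n-1}$ is injective on the nonnegative integers, this forces $t_K=t_{K^r}$, which is the assertion with $t$ defined as this common value.

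I do not expect any genuine obstacle here: this corollary is a one-line consequence of the two propositions immediately preceding it, and all the real work (the ideal-theoretic argument behind Proposition~\ref{prop2} via Lemmas~\ref{lem1} and~\ref{lemext}, and the $L$-function / class-number-formula argument behind Proposition~\ref{rel.cl.eq.}) has already been done. The only thing to be slightly careful about is to state clearly that $t$ is being \emph{defined} as the common value $t_K=t_{K^r}$, so that later references to "$t$" are unambiguous.

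\begin{proof}
Assume $I_{0}(\Phi^{r})=I_{K^{r}}$. By Proposition~\ref{prop2} we have $h^{*}_{K}=2^{t_K-1}$ and $h^{*}_{K^r}=2^{t_{K^r}-1}$, while Proposition~\ref{rel.cl.eq.} gives $h^{*}_{K}=h^{*}_{K^r}$. Combining these, $2^{t_K-1}=2^{t_{K^r}-1}$, and since $n\mapsto 2^{n-1}$ is injective on $\ZZ_{\geq 0}$ we conclude $t_K=t_{K^r}$. We denote this common value by $t$.
\end{proof}
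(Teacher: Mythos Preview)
Your proof is correct and matches the paper's own argument essentially verbatim: apply Proposition~\ref{prop2} to get $h^{*}_{K}=2^{t_K-1}$ and $h^{*}_{K^r}=2^{t_{K^r}-1}$, then invoke Proposition~\ref{rel.cl.eq.} to equate them and conclude $t_K=t_{K^r}$.
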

\begin{proof} By Proposition \ref{prop2}, we have $h^{*}_{K}=2^{t_K-1}$ and $h^{*}_{K^r}=2^{t_{K^r}-1}$. Then by Proposition \ref{rel.cl.eq.}, we get $t_{K}=t_{K^r}$.
\end{proof}

\subsection{Proof of Proposition \ref{prop: ram_in_Fr}}

\begin{repprop}{prop: ram_in_Fr}
	\contentsofpropraminFr
\end{repprop}

\begin{proof}
	Recall from Proposition~\ref{prop:sqrtp} that $F = \QQ(\sqrt{p})$ for a prime number~$p$.
	If $p\not\equiv 3\ (\mathrm{mod}\ 4)$, then $p$ is the only prime that is ramified in $F$.
	If~$p\equiv 3\ (\mathrm{mod}\ 4)$, then $p$ and $2$ are two distinct primes that are ramified in~$F$,
	and hence by the final column of Table~\ref{table}, we get that $2$ is of ``Case (14)'', hence inert in~$F^r$.
	This shows that there are four types of prime numbers that ramify in $N/\QQ$:
\begin{itemize}
\item[(I)] The prime $p$, which is ramified in $F$ and possibly in $F^r$.
\item[(II)] The primes that are unramified in $F$, but ramified in $F^r$, say $q_1,\ \ldots,\ q_s$. 
\item[(III)] The primes that are unramified in $F$ and $F^r$, but ramified in $K$,  say $r_1,\ \ldots,\ r_m$. 
\item[(IV)] If $p\equiv\, 3\ (\mod\, 4)$, then $2\neq p$ is ramified in $F$ and inert in $F^r$
and is of ``Case (14)'' in Table~\ref{table}.
\end{itemize}

Next, we compute the contribution of each ramification type to $t_K$
and $t_{K^r}$.
Let~$f_{p}$ and $f^{r}_{p}$ be the contributions of the primes over $p$. 
Set $i_2=1$ if $p \equiv\, 3\ (\mod\ 4)$, and $i_2=0$ if $p \not\equiv\, 3\ (\mod\ 4)$. 

\noindent\textit{Claim.} We have $t_{K^r}=f^{r}_{p}+m+i_2$
and we have $t_K\geq f_{p}+ s+ m + i_2$ with equality only
if all primes of type (III) are inert in~$F$.

\noindent\textit{Proof of the claim.}
The contributions of $p$ (type (I)) are $f_p$ and $f_p^r$ by definition.\\
(II) By Table \ref{table} including Lemma \ref{ram2},
we see that 
for $i=1,\ldots,s$ the prime $q_i$ splits in $F$ and \textit{exactly} one of the primes above $q_i$ in $F$
 ramifies in $K/F$ and the unique prime above~$q_i$ in $F^r$ does not ramify in $K^r/F^r$.\\
 (III) By \mbox{Lemma \ref{ram3}-(i)}, we see that for $j=1,\ldots, m$ the prime $r_j$ is inert
 in $F^r$ and all primes over $r_j$ ramify in $K^r/F^r$ and~$K/F$.
 It follows that $r_j$
 contributes with \textit{exactly} one prime to $t_{K^r}$, and with \textit{at least}
 one prime to $t_K$ and with exactly one if and only if $r_j$ is inert in $F/\QQ$.\\
 (IV) If $p\equiv3\ (\mod\ 4)$, then by ``Case (14)'' of Table~\ref{table},
 we see that $2$ contributes exactly $1$ to $t_K$ and~$t_{K^r}$.\\
 So we get $t_K\geq f_{p}+ s+ m + i_2$ with equality if and only if all
 primes of type (III) are inert in $F$ and $t_{K^r}=f^{r}_{p}+m+i_2$,
 exactly as claimed.

Corollary \ref{equal} gives $t_K=t_{K^r}$, which by the claim gives $f_p^r - f_p \geq s$ with equality
if and only if all primes of type (III) are inert in~$F$.

We observe that $s\geq 1$ holds. Indeed, if $s=0$, then all primes that ramify in~$F^r$ also ramify in $F$. Hence $d_{F^r}$ divides $d_F$, which is equal to~$p$ if $p\equiv 1\ (\mod\ 4)$ and~$4p$ otherwise. So $F^r\cong F$, a contradiction. 

By Table~\ref{table}, we see $1\geq f^{r}_{p}-f_{p}$ with equality if and only if $p$ splits in~$F^r$.

Combining the three previous paragraphs, we get $f_p^r-f_p = s = 1$, all primes of type (III) are inert in~$F$
and $p$ splits in $F^r$.
As $F^r$ has a unique ramified prime~$q=q_1$, it is $F^r = \QQ(\sqrt{q})$ with $q\not\equiv 3\ (\mathrm{mod}\ 4)$.
And as mentioned in the proof of the claim, the prime $q=q_1$ splits in~$F$.
\end{proof}

\subsection{\texorpdfstring{A sharper bound for $d_{K^r}/d_{F^r}$}{A sharper bound}}\label{betterupper}
\begin{prop}\label{t<6} Let $K$ be a non-normal quartic CM field and let $F$ be its real quadratic subfield. Let $\Phi$ be a primitive CM type of $K$. Suppose $I_{0}(\Phi^{r})=I_{K^{r}}$ and~$d_{N}^{1/8}\geq222$. Then we have $h_{K^r}^{*} \leq 2^{5}$ and $d_{K^r}/d_{F^r}\leq3\cdot10^{10}$. 
\end{prop} 
\begin{proof}

Under the assumption $I_{0}(\Phi^{r})=I_{K^{r}}$, in Propositions \ref{prop:sqrtp} and \ref{prop: ram_in_Fr} we proved $F=\QQ(\sqrt{p})$ and $F^r=\QQ(\sqrt{q})$, where $p$ and $q$ are prime numbers. Additionally, we proved that at least one of the ramified primes above $p$ in $F^r$ is ramified in $K^r/F^r$, and the other ramified primes in $K^r/F^r$ are inert in $F^r$, say $r_1,\ldots,r_{t_{K^r}-1}$.  Therefore, we have $d_{K^r}/d_{F^r}\geq pqr^2_1\cdots r^2_{t_{K^r}-1}$.

Let
\[f(D) = \frac{2\sqrt{D}}{\sqrt{e}\pi^2(\log(D)+0.057)^2}\quad \text{and}\quad  g(t)=2^{-t+1}f(p_{t}p_{t+1}\Delta^2_{t-1}),\]
where $p_j$ is the $j$-th prime number and $\Delta_k=\prod^{k}_{j=1}p_j$. If $D = d_{K^r}/d_{F^r}$,
then we have $h_{K^r} \geq f(D)$ by Proposition \ref{louboutin lower bound}. 

Recall that, by the proof of Proposition \ref{effectiveness}, the function $f$ is monotonically increasing for $D>52$. Therefore,  if $t_{K^r}>3$, then we have $f(d_{K^r}/d_{F^r})>f(p_{t_{K^r}}p_{t_{K^r}+1}\Delta^2_{t_{K^r}-1})$. So in that case by Proposition \ref{prop2} and Corollary \ref{equal}, we have $h^{*}_{K^r}=2^{t_{K^r}-1}$, hence we get $g(t_{K^r})\leq 1$. Further, the function $g$ is monotonically increasing for $t\geq 4$ and $g(t)>1$ if $t=7$. So we get $t_{K^r}\leq 6$ and $h^{*}_{K^r}\leq 2^5$, hence $d_{K^r}/d_{F^r}\leq3\cdot10^{10}$. 
\end{proof}

\section{Enumerating the fields}\label{sec:enumerating}

To specify quartic CM fields, we use the following notation of the ECHIDNA database \cite{echidna}. Given a quartic CM field $K$, let~$D$ be the discriminant of the real quadratic subfield $F$ of $K$. Write $K=F(\sqrt{-\alpha})$ where $\alpha$ is a totally positive element 
of~$\OO_F$ and take~$\alpha$ such that $A:=\Tr_{F/\QQ}(\alpha)>0$ is 
minimal and let $B:=\N_{F/\QQ}(-\alpha)$. We choose $\alpha$ with minimal $B$ 
if there is more than one $B$ with the same~$A$. We use the triple $[D,A,B]$ to 
uniquely represent the isomorphism class of the CM field $K\cong\QQ[X]/(X^4 + AX^2 + B)$.
\begin{thm}\label{thm: non-normallist2} There exist exactly $63$ isomorphism classes of CM class number one non-normal quartic CM fields.  The fields are given by $K\cong\QQ[X]/(X^4 + AX^2 + B)\supset \QQ(\sqrt{D})$ where $[D,A,B]$ ranges over
\begin{align} \nonumber &[5,13,41],\ [5,17,61],\ [5,21,109],\ [5,26,149],\ [5,34,269],\ [5,41,389],\\ \nonumber & [8,10,17],\ [8,18,73],\ [8,22,89],\ [8,34,281],\ [8,38,233],\ [13,9,17],\\ \nonumber &  [13,18,29],\ [13,29,181],\ [13,41,157],\ [17,5,2],\ [17,15,52],\ [17,46,257],\\ \nonumber & [17,47,548],\ [29,9,13],\  [29,26,53],\ [41,11,20],\ [53,13,29],\ [61,9,5],\\ \nonumber & [73,9,2],\ [73,47,388],\ [89,11,8],\ [97,94,657],\ [109,17,45],\\ \nonumber &  [137,35,272],\ [149,13,5],\ [157,25,117],\ [181,41,13],\ [233,19,32],\\ \nonumber & \ [269,17,5],\ [281,17,2],\ [389,37,245] 
\end{align} with class number $1$,
\begin{align} \nonumber & [5,11,29],\ [5,33,261],\ [5,66,909],\ [8,50,425],\ [8,66,1017],\ [17,25,50],\\ \nonumber & [29,7,5],\ [29,21,45],\ [101,33,45],\ [113,33,18],\ [8,14,41],\ [8,26,137],\\ \nonumber & [12,8,13],\ [12,10,13],\ [12,14,37],\ [12,26,61],\ [12,26,157],\ [44,8,5],\\ \nonumber & [44,14,5],\ [76,18,5],\ [172,34,117],\ [236,32,20] 
\end{align} with class number $2$,

\quad\ \ $[257,23,68]$\\ with class number $3$, and

\quad \ \ $[8,30,153],\ [12,50,325],\ [44,42,45]$\\ with class number $4$.
\end{thm}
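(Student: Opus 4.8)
The plan is to combine the effective discriminant bound from Proposition~\ref{t<6} with a finite computer search. By Propositions~\ref{propo1} and~\ref{Fr}, any non-normal quartic CM field $K$ admitting a CM type $\Phi$ with $I_0(\Phi^r) = I_{K^r}$ has real quadratic subfield $F = \QQ(\sqrt{p})$ with $p$ prime, and its reflex $K^r$ has $F^r = \QQ(\sqrt{q})$ with $q$ prime, $q \not\equiv 3 \pmod 4$, and $(p/q) = (q/p) = 1$; moreover $h^*_{K^r} = 2^{t-1} \le 2^5$ and $d_{K^r}/d_{F^r} \le 3 \cdot 10^{10}$. Since $K$ and $K^r$ play symmetric roles in $N/\QQ$ (both are non-normal quartic subfields of the dihedral octic $N$), the same bound applies to $d_K/d_F$, so $d_K = d_F \cdot (d_K/d_F) \le p^2 \cdot 3\cdot 10^{10}$ once one also bounds $p$; in fact the cleanest route is to enumerate all $K^r$ with $d_{K^r}/d_{F^r} \le 3\cdot 10^{10}$ and real quadratic subfield $\QQ(\sqrt q)$, and then take reflexes.

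First I would reduce to a finite list of $K^r$. Using the $[D,A,B]$ parametrization from Section~\ref{main}, I enumerate all triples with $D = d_{F^r}$ a fundamental discriminant of a prime-generated real quadratic field with $q \not\equiv 3 \pmod 4$, and $B$ (equivalently $d_{K^r}/d_{F^r}$, up to bounded correction factors at $2$) within the stated bound; this is a finite search because $d_{K^r}/d_{F^r} = B$ or a small multiple, and for each such field the ramified-primes count $t_{K^r}$ and hence $h^*_{K^r}$ can be read off or computed directly. For each candidate I verify the three necessary conditions: (a) $F^r = \QQ(\sqrt q)$ with $q \not\equiv 3 \pmod 4$; (b) every ramified prime in $K^r/F^r$ other than those above $p,q$ is inert in $F^r$ (from the ramification analysis); and crucially (c) $h^*_{K^r} = 2^{t_{K^r}-1}$, i.e. the relative class number equals the maximal value permitted by Proposition~\ref{prop2}. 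Conditions (a)--(c) are only \emph{necessary}; they cut the list down to a manageable size but the surviving fields must then be checked for the actual condition $I_0(\Phi^r) = I_{K^r}$.

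The final and decisive step is, for each surviving candidate $K$ (with its CM type $\Phi$ and computed reflex $(K^r,\Phi^r)$), to decide whether the CM class group $\CCC_{\Phi^r} = I_{K^r}/I_0(\Phi^r)$ is trivial. This is a direct computation in the ideal class group of $K^r$: compute $\Cl_{K^r}$ (it is finite and small, bounded by $h^*_{K^r} h_{F^r} \le 2^5 h_{F^r}$), and for a set of generators $\bb_1, \dots, \bb_r$ of $\Cl_{K^r}$ test whether each $\bb_i \in I_0(\Phi^r)$, i.e. whether $\N_{\Phi^r}(\bb_i)$ is principal, generated by some $\alpha$ with $\alpha\overline\alpha = \N_{K^r/\QQ}(\bb_i)$; equivalently one checks that the type-norm map $\N_{\Phi^r}\colon \Cl_{K^r} \to \Cl_{K}$ composed with the appropriate norm-compatibility condition kills all of $\Cl_{K^r}$. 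I would carry this out in a computer algebra system (Magma or Pari/GP), recording for each field the triple $[D,A,B]$ and the class number $h_K$; collecting the fields that pass yields exactly the $63$ listed, grouped by $h_K \in \{1,2,3,4\}$. As a consistency check, I would confirm these match the CM fields of Bouyer--Streng \cite{bouyer} Tables 1a, 1b, 2b, 2c.

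The main obstacle is not the final class-group computation (which is routine once the list is finite) but guaranteeing that the finite enumeration in the second step is genuinely exhaustive: one must be careful that the bound $d_{K^r}/d_{F^r} \le 3\cdot 10^{10}$ together with the constraints on $q$ really does bound $q$ itself (so that only finitely many $F^r$ occur), and that the $[D,A,B]$ search range is chosen so that no field with $d_{K^r}/d_{F^r}$ in range is missed — including correctly handling the ramification of $2$ (the starred cases and case (14) of Table~\ref{table}), where the relation between $B$ and $d_{K^r}/d_{F^r}$ involves a factor depending on $p \bmod 4$ and the splitting of $2$. Handling $p$ requires a symmetric argument: $p$ is a ramified prime of $F$, but one still needs that $p$ (or rather $d_F = p$ or $4p$) is bounded, which follows because $p \mid d_K/d_F$ only in the ramified-in-$F^r$ or type-(14) cases that were excluded, so actually $p$ is bounded by the analogous bound on $d_K/d_F$ via the $K \leftrightarrow K^r$ symmetry. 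Once this bookkeeping is done rigorously, the rest is a finite, verifiable computation.
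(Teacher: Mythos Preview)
Your overall strategy---bound $d_{K^r}/d_{F^r}$ via Proposition~\ref{t<6}, enumerate candidate reflex fields $K^r$, then verify $I_0(\Phi^r)=I_{K^r}$ by a class-group computation---is exactly the paper's. The genuine gap is in the enumeration step. You propose to loop over $[D,A,B]$ triples for $K^r$ with $B$ ``equivalently $d_{K^r}/d_{F^r}$, up to bounded correction factors at $2$'', but there is no such equivalence: writing $K^r=F^r(\sqrt{-\beta})$ with $\beta\in\OO_{F^r}$ totally positive, one has $d_{K^r/F^r}\mid(4\beta)$, giving only a \emph{lower} bound $B\ge d_{K^r}/(16\,d_{F^r}^2)$, not an upper bound. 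The minimal-$A$-then-minimal-$B$ normalisation does not force $(\beta)$ to be nearly squarefree, so you have no explicit finite range for $A,B$ and the search is not rigorously finite. The paper closes this gap with a structure lemma (Lemma~\ref{construct.1}): under $I_0(\Phi^r)=I_{K^r}$ one can take $\beta=\pi\, s_1\cdots s_u$ where $\pi$ is a totally positive generator of $\p^j$ for a specific prime $\p\mid p$ of $F^r$ and odd $j$ equal to its (narrow) class order, and the $s_i$ are the inert ramified primes. This turns the enumeration into a loop over prime tuples $(p,q,s_1,\dots,s_u)$ with bounded product, which is both finite and efficient. You already have all the ramification ingredients for this lemma via Proposition~\ref{Fr}; what is missing is assembling them into a parametrisation of $K^r$ itself (the key point being that $\Cl^+_{F^r}$ has odd order since $q\not\equiv 3\pmod 4$, so totally positive generators of the required shape exist).

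Two smaller issues. First, Proposition~\ref{t<6} carries the hypothesis $d_N^{1/8}\ge 222$ from Louboutin's bound (Proposition~\ref{louboutin lower bound}); you must treat the finitely many fields with $d_N<222^8$ separately, as the paper does in Step~1 of Algorithm~\ref{algo2}. Second, your appeal to ``$K\leftrightarrow K^r$ symmetry'' to bound $p$ is unnecessary and slightly off: the condition $I_0(\Phi^r)=I_{K^r}$ is genuinely asymmetric, but $p$ is bounded \emph{directly} on the $K^r$ side, since Proposition~\ref{Fr} (via Table~\ref{table}) shows that a prime of $F^r$ above $p$ ramifies in $K^r/F^r$, hence $p\mid \N_{F^r/\QQ}(d_{K^r/F^r})\mid d_{K^r}/d_{F^r}$. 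Once you enumerate $K^r$, no separate bound on $d_K/d_F$ is needed---you just take reflexes at the end, as the paper does in Step~7.
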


We begin the proof by combining the ramification results into the following explicit form for~$K^r$.

\begin{prop}\label{construct.1}
	Let $K$ be a non-normal quartic CM field and let $\Phi$ be a (primitive) CM type of $K$.
	Suppose $I_{0}(\Phi^{r})=I_{K^{r}}$.
	Then there exist prime numbers $p$, $q$, and $s_1< \cdots<s_{u}$ with
	$u\in\{t_{K^r}-1,\ t_{K^r}-2\}$ such that all of the following hold.
	
	We have $F=\QQ(\sqrt{p})$ and $F^r=\QQ(\sqrt{q})$ with $q\not\equiv 3\ (\mod\ 4)$.
	The primes $p$ and~$q$ are split in $F^r$ and $F$ respectively,
	and all primes $s_i$ are inert in both $F$ and $F^r$.
	There exists a prime $\p$ lying above $p$ in $F^r$ that ramifies in $K^r$,
	an odd $j\in\ZZ_{>0}$ and a totally positive generator $\pi$ of~$\p^j$.
	
	Moreover, for each such $\p$, 
	$\pi$ and~$j$, we have $K^r\cong\QQ(\sqrt{-\pi s_1\cdots s_{u}})$.
\end{prop}

\begin{proof}
 By Proposition~\ref{prop: ram_in_Fr}, we have $F=\QQ(\sqrt{p})$ and
 $F^r=\QQ(\sqrt{q})$, where $p$ and~$q$ are prime numbers with 
 $q\not\equiv3\ (\mod\ 4)$, and such that $p$ (respectively $q$)
 splits in~$\QQ(\sqrt{q})$ (respectively $\QQ(\sqrt{p})$).

There exists a totally positive element $\beta$ in $({F^r})^\times$ such that 
$K^r=F^r(\sqrt{-\beta})$, \mbox{where $\beta$} is uniquely defined \mbox{up to 
$(({F^r})^\times)^2$}, so without loss of generality, we can take $\beta$ in 
$\OO_{F^r}$.

Since $\OO_{K^r} \supset \OO_{F^r}[\sqrt{-\beta}] \supset \OO_{F^r}$, the quotient 
of the discriminant ideals 
$$\Delta(\OO_{K^r}/\OO_{F^r})/\Delta(\OO_{F^r}[\sqrt{-\beta}]/\OO_{F^r}) = \Delta(\OO_{K^r}/\OO_{F^r})/(-4\beta)$$ is a square ideal in $\OO_{F^r}$ 
(see Cohen \cite[pp.79]{cohen}). As $\beta$ is unique up to squares, and we can 
\mbox{take $\lp$-minimal} $\beta'\in\beta(F^\times)^2$ for each prime $\lp$ of 
$\OO_{F^r}$, we get  
\begin{equation}\label{order} \ord_{\lp}((\beta))\equiv \begin{cases} 
1\ (\mod\ 2)&\quad \text{if $\lp$ is ramified in $K^r/F^r $ and $\lp\nmid 2$,}\\
0\ (\mod\ 2) &\quad \text{if $\lp$ is not ramified in $K^r/F^r $,}\\
0\ \text{or}\ 1\ (\mod\ 2)&\quad \text{if $\lp$ is ramified in $K^r/F^r $ and $\lp|2$.}
\end{cases} 
\end{equation}

Let $\lp_1,\ldots,\lp_{t_{K^r}}\subseteq \OO_{F^r}$ be the primes
that ramify in $K^r/F^r$,
and let $l_i\in\ZZ_{>0}$ be the prime number in $\lp_i$.
Let 
$n_i>0$ be minimal such that $\lp_i^{n_i}$ is generated by a totally positive 
$\lambda_i\in\OO_{F^r}$.
Choose such $\lambda_i$, and take $\lambda_i\in\ZZ_{>0}$ whenever possible.
Since we have $F^r=\QQ(\sqrt{q})$ with prime $q\not\equiv 3\ (\mod\ 4)$, 
genus theory implies that $\Cl_{F^r}=\Cl^+_{F^r}$ has odd order, so $n_i$ is odd. 
Let 
\[\alpha = \prod_{i=1}^{\scriptstyle t_{K^r}}\lambda_i^{(\ord_{\lp_i}((\beta))\ \mod\ 2)}.\]

By proving the following two claims we finish the proof.

\noindent\textbf{Claim 1.} We have  $\alpha/\beta\in ({F^r}^\times)^2$.

\noindent\textbf{Claim 2.} We have $\alpha = \pi s_1\cdots s_u$ with $\pi$, $s_i$ and $u$ as in the statement.

\noindent\textit{Proof of Claim 1.} We first prove that
$({\alpha}/{\beta}) = {(\alpha)}/{(\beta)}$ is a square ideal in $F^r$. 
Let $\lp$ be any prime of $F^r$. If $\lp$ is unramified in $K^r/F^r$, then by 
(\ref{order}), we have $\ord_{\lp}((\beta)) \equiv 0\ (\mod\ 2)$. So  by the 
definition of $\alpha$, we have $\ord_{\lp}((\alpha)) = 0$.
If $\lp$ is ramified in $K^r/F^r$, then there exists $i$ such that $\lp=\lp_i$, 
so we get $$\ord_{\lp_i}((\alpha))\equiv\ord_{\lp_i}((\beta))\ord_{\lp_i}((\lambda_i)) \equiv \ord_{\lp_i}((\beta))\ (\mod\ 2)$$ 
as $n_i = \ord_{\lp_i}((\lambda_i))$ is odd. Therefore, the quotient 
$({\alpha}/{\beta})$ is a square of a fractional ideal~$\aaa$ of~$\OO_{F^r}$. Thus $\aaa^2$ 
is generated by the totally positive  $\alpha/\beta$. So the class of $\aaa$ 
is $2$-torsion in the group $\Cl^+_{F^r}$, which has an odd order,
so there is a totally 
positive element $\mu\in (F^r)^\times$ that generates~$\aaa$. 
So $\alpha/\beta = \mu^2\cdot v$ for some $v\in (\OO^{\times}_{F^r})^+$. Moreover, 
since $\Cl_{F^r}=\Cl^{+}_{F^r}$, the norm of the fundamental unit $\epsilon$ is 
negative. Therefore, a unit in $\OO_{F^r}$ is totally positive if and only if it is 
a square in $\OO_{F^r}$. Hence $v$ is a square in $\OO_{F^r}$ so we get 
$\alpha/\beta\in ({F^r}^\times)^2$. 

\noindent\textit{Proof of Claim 2.}  For any given $i$, if $l_i$ is inert in 
$F^r/\QQ$, then $n_i=1$ and $\lambda_i=l_i\in\ZZ_{>0}$ is prime. If $l_i$ is not 
inert in $F^r/\QQ$ then $l_i\in \{p, q\}$, by Proposition~\ref{prop: ram_in_Fr}. 
If $l_i=q$, then as $\lp_i$ is ramified in $K^r/F^r$, by 
Corollary~\ref{totram} we get $\sqrt{q}\in F$, contradiction. So~$l_i = p$.

Let \begin{align*}\{s_1,\ldots,s_u\} = \{l_i\ :&\ l_i\ \text{is inert in}\ F^r/\QQ\ \text{and ramified in}\ K^r/F^r\\
 &\text{and}\ \ord_{\lp_i}((\beta))\equiv 1\ (\mod\ 2)\}.\end{align*} 

Let $p\OO_{F^r}=\p\p'$. Then we have 
$\alpha=\pi^{a}\pi'^{a'}\prod_{i=1}^{u} s_{i}$, where $\pi$ 
and $\pi'$ are totally positive generators of $\p^j$ and $\p'^{j}$ for some odd
$j\in\ZZ_{>0}$. Here, we have $\prod_{i=1}^{u} s_{i}\in\ZZ_{>0}$ and 
$a,\ a'\in\{0,\ 1\}$. If $a = a'$, then $\alpha\in\ZZ$, which leads to a 
contradiction since $K^r$ is non-biquadratic. So we either have 
$\alpha=\pi\prod_{i=1}^{u} s_{i}$ or 
$\alpha=\pi'\prod_{i=1}^{u} s_{i}$. Since $\p$ and $\p'$ are lying 
above $p$ in $F^r$, they are Galois conjugates in $\OO_{F^r}$ and hence $\pi\prod_{i=1}^{u} s_{i}$ and $\pi'\prod_{i=1}^{u} s_{i}$ are 
conjugates up to a unit in $\OO_{F^r}$. As $\pi$, $\pi'$ are both totally positive,
this unit is a square in $\OO_{F^r}$. Hence $\sqrt{-\pi s_{1} \cdots s_u}$ and $\sqrt{-\pi's_{1} \cdots s_u}$ generate conjugate field
extensions, hence generate isomorphic number fields over $\QQ$.

Therefore, we have
\[K^r \cong \QQ(\sqrt{-\pi s_{1} \cdots s_u})\]
and it remains to show $u\in\{t_{K^r}-1,\ t_{K^r}-2\}$.
By (\ref{order}) we get that $u = t_{K^r}$ minus $1$
for $\pi$ (or $\pi'$), minus at most one for every prime
lying over $2$ in case $p\not=2$.
And since such primes are inert in $F^r/\QQ$,
we get that there is at most one such prime.
\end{proof}

Combining Proposition \ref{construct.1} and the bound on the discriminant in Proposition \ref{t<6}, we now have a good way of listing the fields. Next, we need a fast way of eliminating fields from our list if they have CM class number $>1$. 

The following lemma is a special case of Theorem D in Louboutin \cite{loub5}. 

\begin{lem} \label{quick}
Let $K$ be a non-biquadratic quartic CM field and let $F$ be its real quadratic subfield. Let $d_K$ and $d_F$ be the discriminants of $K$ and~$F$. Then assuming $I_{0}(\Phi^r)=I_{K^r}$,  if a rational prime $l$ splits completely in $K^r/\QQ$, then $l\geq \frac{\sqrt{d_K/{d_F}^2}}{4}$. 
\end{lem}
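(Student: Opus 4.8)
The plan is to exploit the assumption that $l$ splits completely in $K^r/\QQ$ to produce a principal ideal in $K$ of small norm, and then bound that norm below using the fact that $\OO_K$ contains no ``too small'' elements. Concretely, if $l$ splits completely in $K^r$, choose a prime $\pp$ of $K^r$ above $l$; then $\N_{K^r/\QQ}(\pp) = l$ and $\overline{\pp} \neq \pp$ in general, but the key point is that, by the assumption $I_0(\Phi^r) = I_{K^r}$, we have $\pp \in I_0(\Phi^r)$, so $\N_{\Phi^r}(\pp) = (\alpha)$ for some $\alpha \in K^\times$ with $\N_{K^r/\QQ}(\pp) = \alpha\overline\alpha$, i.e. $\alpha\overline\alpha = l$. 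Actually I want the analogous statement with the roles of $K$ and $K^r$ interchanged: applying the same reasoning to a prime of $K^r$ and using $\N_{\Phi^r}: I_{K^r} \to I_K$ (Shimura's type norm for the reflex type goes back to $K$, since $(K^r)^r \cong K$), we get a principal ideal $(\alpha) = \N_{\Phi^r}(\pp)$ in $\OO_K$ with $\N_{K/\QQ}(\alpha) = \alpha\overline\alpha\,\overline{\alpha\overline\alpha}$ and, more importantly, $\alpha\overline\alpha = \N_{\Phi^r}(\pp)\overline{\N_{\Phi^r}(\pp)} = \N_{K^r/\QQ}(\pp) = l$ in $\QQ$ (using $\N_\Phi \overline{\N_\Phi} = \N_{K/\QQ}$). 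So we obtain a nonzero $\alpha \in \OO_K$ with $\alpha\overline\alpha = l$, an element of the totally real field $F$ that is totally positive with $\Tr_{F/\QQ}(\alpha\overline\alpha)$ controlled.

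**Next I would** turn ``$\alpha\overline\alpha = l$ with $\alpha \in \OO_K$'' into a lower bound on $l$ via a Minkowski/geometry-of-numbers estimate, or — more in the spirit of this paper — via a direct discriminant computation. The element $\gamma := \alpha\overline\alpha \in \OO_F$ is totally positive of norm $\N_{F/\QQ}(\gamma) = \N_{K/\QQ}(\alpha) = l^2$. Meanwhile $\alpha$ generates a CM subfield: either $\QQ(\alpha) = K$, or $\alpha$ lies in a proper subfield. Since $K$ is non-biquadratic quartic, its only proper CM-or-real subfield is $F$ (there is no imaginary quadratic subfield), so if $\alpha \notin F$ then $\QQ(\alpha) = K$. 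In the case $\QQ(\alpha) = K$, the ring $\ZZ[\alpha] \subseteq \OO_K$ has discriminant divisible by $d_K$, and $\disc(\ZZ[\alpha]) = \disc(X^4 + \cdots)$ is a polynomial expression in $\Tr$ and $\N$ of $\alpha$ bounded in terms of $l$; turning this around gives $l \gtrsim d_K^{1/4}$, which after inserting the relation between $d_K$, $d_F$ and $d_N$ (and the structure $d_K = d_F^2 \cdot \N_{F/\QQ}(\mathfrak d_{K/F})$) yields the stated bound $l \geq \sqrt{d_K/d_F^2}/4$. The case $\alpha \in F$ (so $(\alpha) \subseteq \OO_F$ has norm $l$, forcing $l$ split in $F$ and $\alpha$ a generator of a prime of $F$) has to be handled separately, but there the bound is easy or the case is excluded because it would force $\pp$ to have a special form incompatible with $l$ splitting completely in $K^r$.

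**The main obstacle** I expect is the bookkeeping in the discriminant estimate: relating $\disc(\ZZ[\alpha])$ — equivalently the coefficients of the minimal polynomial of $\alpha$, which are symmetric functions of $\alpha$ and its conjugates, all of absolute value $\sqrt l$ — to $d_K$ with the correct constant $4$, and correctly translating $d_K$ into $d_K/d_F^2 = \N_{F/\QQ}(\mathfrak d_{K/F})$. One has to be careful that $\OO_K$ may strictly contain $\ZZ[\alpha]$ (the index contributes a square factor, which only helps), and careful with the power of $2$ coming from $\OO_F[\sqrt{-\beta}]$-type considerations as in Lemma \ref{construct.1}; this is presumably exactly where Louboutin's Theorem D does the work, so I would either quote it or reprove the relevant special case. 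A secondary point is justifying that $\N_{\Phi^r}(\pp)$ is genuinely a principal \emph{integral} ideal with a generator $\alpha$ satisfying $\alpha\overline\alpha = l$ on the nose (not just up to units); this follows from the definition of $I_0(\Phi^r)$ together with $\mu_K = \{\pm 1\}$ and the fact that totally positive units of $F$ are squares (as used repeatedly above), letting us normalize $\alpha$.
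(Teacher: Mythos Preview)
Your first step matches the paper exactly: pick a prime $\mathfrak{l}$ of $K^r$ above $l$, and since $\mathfrak{l}\in I_0(\Phi^r)$ obtain $\tau\in\OO_K$ (your $\alpha$) with $(\tau)=\N_{\Phi^r}(\mathfrak{l})$ and $\tau\overline\tau=l$. For the degenerate case you worry about, the paper simply notes $\tau\neq\overline\tau$ because $\sqrt{l}\notin K$; indeed $l$ totally split in $K^r$ forces $l$ totally split in $N$ (the normal core of $\Gal(N/K^r)=\langle xy\rangle$ in $\Gal(N/\QQ)$ is trivial), hence $l$ is unramified in $F$, and $K$ has no imaginary quadratic subfield. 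No separate case analysis or unit normalization is needed.

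The discriminant step is where your plan needs sharpening. Bounding the absolute discriminant of $\ZZ[\tau]$ gives only $d_K\le\prod_{i<j}|\tau_i-\tau_j|^2\le(4l)^6$, which does \emph{not} imply the stated inequality. The paper works \emph{relatively} instead: since $\tau\notin F$ we have $\OO_K\supset\OO_F[\tau]$, and the relative discriminant $\Delta(\OO_K/\OO_F)$ divides $\Delta(\OO_F[\tau]/\OO_F)=(\tau-\overline\tau)^2\OO_F$. Every archimedean absolute value of $\tau$ equals $\sqrt{l}$, so $|(\tau-\overline\tau)^2|\le 4l$ at each real place of $F$, whence
\[
d_K/d_F^2 \;=\; \N_{F/\QQ}\bigl(\Delta(\OO_K/\OO_F)\bigr)\;\le\;\N_{F/\QQ}\bigl((\tau-\overline\tau)^2\bigr)\;\le\;(4l)^2.
\]
So the translation $d_K/d_F^2=\N_{F/\QQ}(\Delta(\OO_K/\OO_F))$ that you flag as mere bookkeeping is in fact the whole argument: once you replace $\ZZ[\tau]$ by $\OO_F[\tau]$, the quadratic relative extension makes the constant $4$ fall out in one line.
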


\begin{proof} Let $l$ be a rational prime that splits completely in $K^r/\QQ$. 
Let $\mathfrak{l}_{K^r}$ be a prime ideal in~$K^r$ above~$l$. By the 
assumption $I_{0}(\Phi^r)=I_{K^r}$, there exists $\tau\in K^{\times}$ 
such that $\N_{\Phi^r}(\mathfrak{l}_{K^r})=(\tau)$ and $\tau\overline{\tau}=l$. Here $\tau\neq\overline{\tau}$, 
since $\sqrt{l}\not\in K$. Then since $\OO_{K}\supset\OO_{F}[\tau]$ 
and $\Delta(\OO_{F}[\tau]/\OO_{F}) = (\tau-\overline{\tau})^2$, we 
have $d_{K}/d^{2}_{F} = \N_{F/\QQ} (d_{K/F}) = \N_{F/\QQ}(\Delta(\OO_{K}/\OO_{F})) \leq \N_{F/\QQ}((\tau-\overline{\tau})^2)$. 
Moreover, since $\tau\overline{\tau}=l$, we have $\phi(\tau-\overline{\tau})^2\leq (2\sqrt{l})^2$ for all embeddings $\phi: F\hookrightarrow \RR$, hence $d_{K}/d^{2}_{F} \leq \N_{F/\QQ}((\tau-\overline{\tau})^2)\leq 16l^2$.
\end{proof}

Every prime $s_i$ as in Proposition \ref{construct.1} divides $\Delta({K^r}/{F^r})$ and so $s_i^2|d_{K^r}$, hence $s_i^4|d_N$. The primes $p$ and $q$ are ramified in $F$ and $F^r$, so $p^4$ and $q^4$ divide the discriminant~$d_N$ of the normal closure $N$ of degree $8$. Hence $d_N\geq p^{4}q^{4}s^{4}_1\cdots s^{4}_{u}$ with $u\in\{t_{K^r}-1,\ t_{K^r}-2\}$. 

\begin{Algorithm} \label{algo2}

\textbf{Output:} $[D,\ A,\ B]$ representations of all non-normal quartic CM fields $K$ satisfying  $I_{0}(\Phi^r)=I_{K^r}$.
\begin{itemize}
\item[Step 1.] Find all square-free integers smaller than $3\cdot10^{10}$ having at most $8$ prime divisors and find all square-free integers smaller than $222^2$.
\item[Step 2.] Order the prime factors of each of these square-free integers
as tuples of primes $(p,q,s_1,\ldots,s_{u})$ with $s_{1}<\cdots <s_u$ in $(u+1)(u+2)$-ways.

Then keep only the tuples for which
$p$ is split in $F^r = \QQ(\sqrt{q})$,
$q$ is split in $F = \QQ(\sqrt{p})$,
and all primes $s_i$ are inert in both $F$ and~$F^r$.
\item[Step 3.] For each $(p,q,s_1,\ldots,s_{u})$, let $F^{r}=\QQ(\sqrt{q})$, write $p\OO_{F^r}=\p\p'$, and take $\alpha=\pi\cdot s_1\cdots s_u\in F^{r}$, where $\pi$ is a totally positive generator of $\p^{j}$ for the minimal $j\in\ZZ_{>0}$. Construct $K^{r}=F^{r}(\sqrt{-\alpha})$. 
\item[Step 4.] Eliminate the fields $K^r$ that have totally split primes in $K^r$ below the bound ${\sqrt{d_{K}/d^{2}_F}}/{4}$. 
\item[Step 5.]  For each $\mathfrak{q}$ with norm $Q$ below  $12\log(|d_{K^r}|)^2$,  check whether it is in $I_{0}(\Phi^r)$ as follows. List all quartic Weil $Q$--poly\-no\-mi\-als, that is, monic integer polynomials of degree $4$ such that all roots in $\CC$ have absolute value $\sqrt{Q}$. For each, take its roots in~$K$ and check whether $\N_{\Phi^r}(\mathfrak{q})$ is generated by
such a root. If not, then $\mathfrak{q}$ is not in $I_0(\Phi^r)$, so we
throw away the field.
\item[Step 6.] For each $K^r$, compute the class group of $K^r$ and for a CM type $\Phi$ of $K$ test $I_{0}(\Phi^r)/P_{K^r}=I_{K^r}/P_{K^r}$.
\item[Step 7.] Find $[D,\ A,\ B]$ representations for the reflex fields $K$ of the remaining pairs $(K^r,\Phi^r)$. 
\end{itemize}
\end{Algorithm}

\begin{proof}

Note that Step $4$ and Step $5$ of the algorithm above do not affect the validity of the algorithm by Lemma \ref{quick}. These two steps are only to speed up the computation. In Step $4$ we eliminate most of the CM fields.

Suppose that a non-normal quartic CM field $K$ satisfies $I_{0}(\Phi^r)=I_{K^r}$.
Then by Proposition~\ref{construct.1}, we have $F=\QQ(\sqrt{p})$ and $F^r=\QQ(\sqrt{q})$,
where $p$ and $q$ are prime numbers with $q\not\equiv 3\ (\mod\ 4)$ and such that
$p$ and $q$ are split in $F^r$ and $F$ respectively.
Also by Proposition~\ref{construct.1}, there exist a prime $\p$
lying above $p$ in $F^r$ that ramifies in $K^r$ and a totally positive element
$\alpha = \pi s_1\cdots s_{u}$, where~$\pi$ is a totally positive generator of $\p^{j}$
for some odd $j\in\ZZ_{>0}$ such that $K^r=F^{r}(\sqrt{-\alpha})$.
By Proposition~\ref{prop: ram_in_Fr}, the ramified primes in $K^r/F^r$ that
are distinct from $\p$ are inert in~$F$ and $F^r$. Hence $s_1, \ldots, s_{u}$
are inert in $F$ and $F^r$.
By Lemma~\ref{t<6}, we have either $h^{*}_{K^r}=2^{t_{K^r}-1}\leq2^5$
and  $d_{K^r}/d_{F^r}\leq3\cdot10^{10}$ or $d_N<222^8$.
Therefore, the CM field $K$ is listed. 
\end{proof}
We implemented the algorithm in SageMath \cite{sage, pari,streng}
and obtained the list of the fields in Theorem \ref{thm: non-normallist2}. The implementation is available online at \cite{kilicer_code}. This proves Theorems \ref{thm: non-normallist2} and \ref{thm: non-normallist}. \qed

This computation is easily parallelized
and took less than 25 core-weeks.

\begin{remk} There are no fields eliminated in Step 6, because they turned out to be already eliminated in Step 5. 
\end{remk} 

\section{The cyclic quartic case}\label{cyclic}

We apply the strategy in the previous sections in order to find all
cyclic quartic CM fields with CM class number 
one.
Murabayashi and Umegaki~\cite{umeg} already determined
those whose CM curves can be defined over~$\QQ$,
but there are additional fields with CM class number one
in Table~1b of \cite{bouyer}.

Suppose that $K/\QQ$ is a cyclic quartic CM field with $\Gal(K/\QQ)=\langle y\rangle$. Since~$K/\QQ$ is normal, we consider CM types with values in $K$. The CM type, up to equivalence, is $\Phi=\{\id,  y\}$, which is primitive. The reflex field $K^r$ is $K$ and the reflex type of $\Phi$ is the CM type $\{\id, y^{3}\}$ (Example 8.4(1) of Shimura \cite{shimura3}). In this notation complex conjugation~$\overline{\cdot}$ is $ y^2$.

Suppose $K\cong\QQ(\zeta_5)$, where $\zeta_m$ denotes a primitive $m$-th root of unity. Then the class group of $K$ is trivial, so the equality $I_{0}(\Phi^{r})=I_{K}$ holds. Hence $K=\QQ(\zeta_5)$ will occur in the list of cyclic quartic CM fields satisfying $I_{0}(\Phi^{r})=I_{K}$. 

From now on, suppose $K\not\cong\QQ(\zeta_5)$.
\begin{lem}\label{ramification1}(Murabayashi \cite{murab}, Lemma 4.2) If $I_{0}(\Phi^{r})=I_{K^r}$, then there is exactly one totally ramified prime in $K/\QQ$
	(i.e., $F=\QQ(\sqrt{p})$ for a prime $p\not \equiv 3\ (\mod\ 4)$)
	 and the other ramified primes of $K/\QQ$ are inert in $F/\QQ$. \qed
\end{lem}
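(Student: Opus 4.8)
The plan is to adapt the ramification analysis of Section \ref{ramification} to the simpler cyclic setting. Here the normal closure $N$ is $K$ itself, with $\Gal(K/\QQ) = \langle\sigma\rangle \cong \ZZ/4\ZZ$, the unique quadratic subfield $F$ is the fixed field of $\sigma^2$, and complex conjugation is $\sigma^2$. The key structural fact is that in a cyclic group of order $4$ there is no Klein four subgroup, so Lemma \ref{inertia gp. not V4} is automatic here; more importantly, the possible inertia groups of a ramified prime $p$ are only $\langle\sigma^2\rangle$ (order $2$, ramified in $F$ but not totally ramified in $K$) and $\langle\sigma\rangle$ (order $4$, totally ramified in $K$). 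A prime ramified in $K/F$ but unramified in $F/\QQ$ must therefore have inertia group exactly $\langle\sigma\rangle$ and hence be totally ramified in $K/\QQ$; equivalently, a prime with inertia group $\langle\sigma^2\rangle$ has a prime above it in $F$ that is unramified in $K/F$ while the primes of $F$ ramified in $K/F$ that do not come from a ramified prime of $F/\QQ$ are exactly the inert-in-$F$ primes.

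First I would record the analogue of Lemma \ref{ram2}: assuming $I_0(\Phi^r) = I_K$, if $K$ has a prime $\p$ of prime norm $p$ with $\overline\p = \p$, then $\sqrt p \in F$. The proof is verbatim the proof of Lemma \ref{ram2} — from $\N_{\Phi^r}(\p) = (\alpha)$ with $\alpha\overline\alpha = p$ and $(\alpha) = (\overline\alpha)$ one gets $\alpha = \epsilon\overline\alpha$ for a unit $\epsilon$ of absolute value $1$, hence $\epsilon = \pm1$ since $\mu_K = \{\pm1\}$, so $\alpha^2 = \pm p$; the sign $-$ is impossible as $K$ has no imaginary quadratic subfield (it is totally imaginary cyclic of degree $4$), so $\alpha^2 = p$ and $\sqrt p\in F$. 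Applying this: if $p$ is totally ramified in $K/\QQ$, then the unique prime $\p$ above $p$ in $K$ satisfies $\overline\p = \p$ and has prime norm $p$, so $\sqrt p\in F$, i.e. $F = \QQ(\sqrt p)$; and if a prime $\ell$ is inert in $F/\QQ$ with the prime above it ramifying in $K/F$, then again the prime of $K$ above $\ell$ is fixed by $\overline{\cdot}$ and has prime norm $\ell$, forcing $F = \QQ(\sqrt\ell)$.

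Now the main argument. Let $p$ be a prime ramified in $F/\QQ$; such a prime exists since $F\neq\QQ$. I claim $p$ is totally ramified in $K/\QQ$ under the hypothesis $I_0(\Phi^r) = I_K$ — this is where the work is, and I expect it to be the main obstacle. Suppose not; then the inertia group of a prime of $K$ above $p$ is $\langle\sigma^2\rangle$, so $p = \p_{F}^2$ in $F$ with $\p_F$ splitting or inert (not ramified) in $K/F$. If $\p_F$ is inert in $K/F$, the unique prime $\p_K$ above $p$ in $K$ has norm $p^2$ and is fixed by $\overline{\cdot}$; one then runs the Lemma \ref{ram2}-type computation with $\N_{K/\QQ}(\p_K) = p^2$ to derive $\sqrt{p}\in F$ — consistent — but one needs to rule out the remaining sub-case and, more delicately, handle the possibility that two distinct primes ramify in $F/\QQ$. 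If $p$ and $p'$ both ramify in $F/\QQ$ with $p'$ odd, a parity/discriminant count (as in Lemma \ref{inertia gp. not V4}, using that $d_F$ is squarefree away from $2$) gives a contradiction, so at most one odd prime ramifies in $F$, forcing $F = \QQ(\sqrt p)$ with $p\not\equiv 3\pmod 4$ (otherwise $2$ also ramifies in $F$). Having pinned down $F = \QQ(\sqrt p)$, the remaining assertion — every other ramified prime of $K/\QQ$ is inert in $F/\QQ$ — follows because a prime $\ell\neq p$ ramified in $K/\QQ$ is unramified in $F/\QQ$ (as $p$ is the only ramified prime of $F/\QQ$), so by the dichotomy on inertia groups its prime in $F$ ramifies in $K/F$; if $\ell$ split in $F/\QQ$ the above consequence of the $\overline\p=\p$ argument would give $\sqrt\ell\in F$, contradicting that $\ell$ is unramified in $F$. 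Hence $\ell$ is inert in $F/\QQ$, as claimed. (Since this is quoted as Lemma 4.2 of Murabayashi \cite{murab}, one may alternatively simply cite that reference, which is what the statement does.)
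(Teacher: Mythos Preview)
The paper does not prove this lemma; as you note at the end, it is attributed to Murabayashi \cite{murab} and closed immediately. So the only comparison available is with the machinery of Section~\ref{ramification}, which you are rightly trying to specialize to the cyclic case.

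Your strategy is correct, but the inertia-group bookkeeping in the first and third paragraphs is backwards, and this is a genuine error rather than a slip of the pen. With $\Gal(K/F)=\langle\sigma^2\rangle$, the image of an inertia group $I\leq\langle\sigma\rangle$ in $\Gal(F/\QQ)=\langle\sigma\rangle/\langle\sigma^2\rangle$ is the inertia group for $F/\QQ$; hence $I=\langle\sigma^2\rangle$ means the rational prime is \emph{unramified} in $F/\QQ$ (and ramified in $K/F$), while $I=\langle\sigma\rangle$ means totally ramified in $K/\QQ$. You assert the opposite. Two consequences. First, a prime ramified in $F/\QQ$ is \emph{automatically} totally ramified in $K/\QQ$ by pure group theory --- no CM hypothesis is needed --- so what you call ``the main obstacle'' is vacuous, and your ``suppose not'' branch (where you write ``$p=\p_F^2$ in $F$'' while simultaneously taking $I=\langle\sigma^2\rangle$) is internally contradictory. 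Second, your claim that an inert-in-$F$ prime $\ell$ ramifying in $K/F$ yields a prime of $K$ of norm $\ell$ is false: the norm is $\ell^2$, the reflex type norm of that prime is just $(\ell)$, and no contradiction arises --- nor should one, since this is exactly the behavior the lemma permits. Once the bookkeeping is corrected, the argument is short and your ingredients suffice: every prime ramified in $F$ is totally ramified in $K$, so by your (correct) analogue of Lemma~\ref{ram2} it is the unique $p$ with $F=\QQ(\sqrt p)$; thus exactly one prime ramifies in $F$ and $p\not\equiv 3\pmod 4$; your final paragraph then correctly rules out the split case for the remaining ramified primes of $K/\QQ$.
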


\begin{example} \label{cyc2} Suppose $I_{0}(\Phi^{r})=I_{K^r}$. The relative class number $h^{*}_K$ equals $1$ if and only if $K/F$ has exactly one ramified prime.
This ramified prime is $\sqrt{p}$, where $F=\QQ(\sqrt{p})$.
\end{example}

We now determine such CM fields by using a lower bound on their relative class numbers from analytic number theory.

\begin{thm} (Louboutin \cite{loub1}, Theorem 5)\label{lower1} Let $K$ be a cyclic quartic CM field of conductor~$f_{K}$ and discriminant $d_K$. Then we have
   \begin{equation}\label{eq:cyclic}\pushQED{\qed} h^{*}_{K}\geq \frac{2}{3e \pi^2}\left(1-\frac{4\pi e^{1/2}}{d_{K}^{1/4}}\right)\frac{f_{K}}{(\log(f_{K})+0.05)^2}. \qedhere
\popQED\end{equation}	
\end{thm}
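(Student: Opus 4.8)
The plan is to rewrite $h^{*}_K$ in terms of $L(1,\chi)$ for the quartic Dirichlet character $\chi$ cutting out $K$, and then to invoke the standard smoothed explicit-formula lower bound for such an $L$-value.

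\emph{Step 1 (reduction to an $L$-value).} Since $K/\QQ$ is cyclic of degree $4$, totally imaginary, with real quadratic subfield $F$, there is a primitive odd Dirichlet character $\chi$ of order exactly $4$ and conductor $f_K$ with $\QQ(\chi)=K$; thus $\chi\neq\overline\chi$, and $\chi^2$ is the quadratic character of conductor $d_F$ attached to $F$. Factoring Dedekind zeta functions gives $\zeta_K(s)=\zeta_\QQ(s)\,L(s,\chi)L(s,\chi^2)L(s,\overline\chi)$ and $\zeta_F(s)=\zeta_\QQ(s)\,L(s,\chi^2)$, so $\zeta_K(s)/\zeta_F(s)=L(s,\chi)L(s,\overline\chi)$ is entire and, for complex $\chi$, has no real zero in $(0,1)$. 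Comparing residues at $s=1$ (using $w_K=2$ and $R_K=2R_F$ exactly as in the proof of Proposition \ref{rel.cl.eq.}) and applying the conductor--discriminant identity $d_K=f_\chi f_{\chi^2}f_{\overline\chi}=f_K^2\,d_F$, so that $\sqrt{d_K/d_F}=f_K$, one obtains the clean formula
\[
h^{*}_{K}\;=\;\frac{f_K}{2\pi^2}\,\bigl|L(1,\chi)\bigr|^{2}.
\]
Hence it suffices to prove $\bigl|L(1,\chi)\bigr|^{2}\geq \tfrac{4}{3e}\bigl(1-4\pi e^{1/2}d_K^{-1/4}\bigr)(\log f_K+0.05)^{-2}$.

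\emph{Step 2 (effective lower bound for $L(1,\chi)$).} This is the analytic core. One picks a smoothing kernel whose Mellin transform has a simple pole at $s=1$, expresses a smoothed twisted sum $\sum_{n\ge1}\chi(n)W(n/x)$ as a contour integral of $L(s,\chi)$ against the kernel, and moves the contour to the left. The residue at $s=1$ contributes a term proportional to $x\,L(1,\chi)$; the functional equation for $L(s,\chi)$ turns the shifted integral into the dual smoothed sum, of effective length $\asymp d_K^{1/2}$, and the resulting truncation error produces the factor $1-4\pi e^{1/2}d_K^{-1/4}$. The decisive point is that a \emph{complex} character has no exceptional (Siegel) zero near $s=1$, so this main term is not spoiled; the only possible real zero of $\zeta_K$ in $(0,1)$ belongs to the real subcharacter $\chi^2$, and a real zero of $L(s,\chi^2)$ only shrinks $h_F$, hence enlarges $h^{*}_K=h_K/h_F$, and so is harmless for a \emph{lower} bound. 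Choosing $x$ optimally against $f_K$ and $d_K$, and estimating the remaining sums by the divisor bound $|(\chi*\overline\chi)(n)|\le d_2(n)$, gives the inequality of Step 1 with the stated constants.

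Combining Steps 1 and 2 yields \eqref{eq:cyclic}. The main obstacle is Step 2 --- extracting a lower bound for $L(1,\chi)$ with explicit, numerically sharp constants. Conceptually it rests on the absence of a Siegel zero for complex $\chi$ (equivalently, on the observation that the real subcharacter's possible exceptional zero can only help the lower bound on $h^{*}_K$); technically it requires careful bookkeeping of constants through the kernel integral and the functional equation, in particular the dual-sum length $\asymp d_K^{1/2}$ responsible for the $d_K^{1/4}$ correction term. For the full details, including the verification of the constants $2/\sqrt{3e}$, $0.05$ and $4\pi e^{1/2}$, see Louboutin \cite{loub1}.
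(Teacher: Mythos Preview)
The paper does not prove this statement: it is quoted from Louboutin \cite{loub1} and stated without proof. Your sketch is therefore not competing with any argument in the paper but is a reasonable outline of Louboutin's own method. Step~1 is correct (with the proviso $w_K=2$, i.e.\ $K\neq\QQ(\zeta_5)$, which the paper also imposes; strictly speaking the Hasse unit index $Q\in\{1,2\}$ enters the constant, but $Q\geq 1$ only helps a lower bound). One small imprecision: the assertion that $L(s,\chi)L(s,\overline\chi)$ ``has no real zero in $(0,1)$'' is not known unconditionally and is not what is actually used. What Louboutin's argument exploits is the positivity $L(s,\chi)L(s,\overline\chi)=|L(s,\chi)|^2\geq 0$ on the real line together with the fact, which you state correctly in Step~2, that a \emph{complex} primitive character has no exceptional (Siegel) zero near $s=1$. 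With that adjustment your sketch accurately captures the structure of the cited proof.
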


\begin{prop}\label{lemrel}
	In the situation
	of Theorem~\ref{lower1}, suppose $I_{0}(\Phi^r) = I_{K^r}$.
	Then we have $h^{*}_K\leq2^5$ and $f_K<2.1\cdot10^5$. 
\end{prop}

\begin{proof} 
Under the assumption $I_{0}(\Phi^r) = I_{K^r}$, Lemma \ref{ramification1} implies that there is exactly one totally ramified prime in $K/\QQ$ and the other ramified primes of $K/\QQ$ are inert in $F/\QQ$. 

Let $\Delta_t$ be the product of the first $t$ primes. Let $t_K$ be the number of primes in $F$ that are ramified in $K$. Since the ramified primes in $K/\QQ$ divide the conductor $f_K$, we have $f_K>\Delta_{t_K}$.  Further, by Propositions 11.9 and 11.10 in 
Chapter VII of~\cite{neukirch},
we have $d_K=f^2_K\cdot d_F$ so $d_K>\Delta_{t_K}^2$. The right hand side of (\ref{eq:cyclic}) is monotonically  increasing with $f_K>2$. Further, by Proposition \ref{prop2}, we have $h^*_K = 2^{t_K-1}$ so by dividing both sides of (\ref{eq:cyclic}) by $2^{t_K-1}$, we obtain
\begin{equation}\label{eq: cycliceqn}
1\geq \frac{2}{3e \pi^2}\left(1-\frac{4\pi e^{1/2}}{\Delta_{t_K}^{1/2}}\right)\frac{\Delta_{t_K}}{2^{t_K}(\log(\Delta_{t_K})+0.05)^2}.
\end{equation}
The right hand side of  (\ref{eq: cycliceqn}) is monotonically increasing with $t_K\geq 2$, and if $t_K=7$, then the right hand side is greater than $1$. Hence $t_K\leq 6$. So we get $h^{*}_{K}\leq 2^5$, and therefore, we get $f_K< 2.1\cdot10^5$. 
\end{proof}

\begin{thm}\label{thm: cycliclist2} There exist exactly $20$ isomorphism classes of CM class number one cyclic quartic CM fields. The fields are given by $K\cong\QQ[X]/(X^4 + AX^2 + B)\supset \QQ(\sqrt{D})$ where $[D,A,B]$ ranges over
\[[5,5,5],\ [8,4,2],\ [13,13,13],\ [29, 29, 29],\] \[[37,37,333],\ [53, 53, 53],\ [61, 61, 549] \] 
with class number $1$,
\[[5, 65, 845],\ [5, 85, 1445],\ [5, 10, 20],\ [8, 12, 18],\]\[ [8, 20, 50],\ [13, 65, 325],\ [13, 26, 52],\ [17, 119, 3332] \]
with class number $2$, and 
\[[5, 30, 180],\ [5, 35, 245],\ [5, 15, 45],\ [5, 105, 2205],\ [17, 255, 15300]\]
with class number $4$.
\end{thm}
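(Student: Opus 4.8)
The plan is to run the strategy of Section~\ref{non-normal}, but everything is easier here because $K^r = K$, $F^r = F$, and the conductor bound $f_K < 2.1\cdot 10^5$ of Proposition~\ref{lemrel} is small. The field $K \cong \QQ(\zeta_5) = [5,5,5]$ already satisfies $I_0(\Phi^r) = I_K$ since $\Cl_K$ is trivial, so it remains to enumerate the fields $K \not\cong \QQ(\zeta_5)$ with $I_0(\Phi^r) = I_K$ and then to check that the $20$ fields in the statement are exactly the cyclic quartic CM fields with CM class number one (the groupings by $h_K$ being a byproduct of the class-group computation below).

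First I would prove the cyclic analogue of Lemma~\ref{construct.1}. By Lemma~\ref{ramification1}, under $I_0(\Phi^r) = I_K$ we have $F = \QQ(\sqrt p)$ for a prime $p \not\equiv 3 \pmod 4$, and every other ramified prime of $K/\QQ$ is inert in $F$; list these as $s_1 < \cdots < s_{t_K - 1}$. Writing $K = F(\sqrt{-\beta})$ with $\beta$ totally positive in $\OO_F$ (unique up to $(F^*)^2$) and comparing $\Delta(\OO_K/\OO_F)$ with $\Delta(\OO_F[\sqrt{-\beta}]/\OO_F) = (-4\beta)$ exactly as in the proof of Lemma~\ref{construct.1}, one gets that $\ord_{\lp}((\beta))$ is odd precisely at the odd primes $\lp$ ramified in $K/F$, namely $(\sqrt p)$ (when $p$ is odd) and $(s_i)$ for each $i$. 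Genus theory for $F = \QQ(\sqrt p)$ with $p \not\equiv 3 \pmod 4$ prime shows $\Cl^+_F = \Cl_F$ has odd order; hence $(\sqrt p)$ --- whose square $(p)$ has the totally positive generator $p$ --- has a totally positive generator $\pi$, the $s_i$ are already totally positive, and totally positive units of $\OO_F$ are squares. Replacing $\beta$ by a $\lp$-minimal representative of its square class, one concludes $K \cong \QQ(\sqrt{-\pi s_1 \cdots s_{t_K-1}})$, with $\pi$ a totally positive generator of an odd power of $(\sqrt p)$ (the prime $p = 2$ being absorbed into the same statement via a power $(\sqrt 2)^j$, $j$ odd).

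The enumeration is then the obvious restriction of Algorithm~\ref{algo2}: range over square-free integers below a bound coming from $f_K < 2.1\cdot 10^5$ and with at most $6$ prime factors (Proposition~\ref{lemrel} gives $t_K \le 6$), order the prime factors as $(p, s_1, \ldots, s_{t_K-1})$ with $p \not\equiv 3 \pmod 4$ and $(p/s_i) = -1$ for all $i$, form $K = \QQ(\sqrt{-\pi s_1 \cdots s_{t_K-1}})$, discard $K$ unless it is a cyclic quartic CM field, discard $K$ whenever some rational prime below $\sqrt{d_K/d_F^2}/4$ splits completely in $K$ (valid by Lemma~\ref{quick}, since cyclic quartic fields are non-biquadratic), apply the Weil-polynomial test of Step~5 of Algorithm~\ref{algo2}, and finally compute $\Cl_K$ and test $I_0(\Phi^r)/P_K = I_K/P_K$. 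Implementing this in SAGE yields exactly the $20$ fields listed, and since the list of \cite{umeg} is contained among them, this proves Theorems~\ref{thm: cycliclist2} and~\ref{thm: cycliclist}.

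The main obstacle is not any single estimate --- the analytic lower bound (Theorem~\ref{lower1}, Proposition~\ref{lemrel}) and the ramification structure (Lemma~\ref{ramification1}) are already available --- but rather making the parametrization above fully rigorous, in particular the behaviour at the prime $2$ and the verification that the square-class and totally-positive-generator ambiguities never cause a field to be dropped from the search, and then carrying out the finite search together with the per-field certification that $I_0(\Phi^r) = I_K$. Since the conductor bound is far smaller than the discriminant bound in the non-normal case, this last computation is quick, so the genuine work lies in establishing completeness of the search space.
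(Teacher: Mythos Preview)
Your proposal is correct and follows essentially the same route as the paper: the paper's Lemma~\ref{construct.2} is exactly your cyclic analogue of Lemma~\ref{construct.1} (the paper writes the generator as $\epsilon\sqrt{p}$ with $\epsilon\sqrt{p}\gg 0$, which is your $\pi$ for $j=1$), and Algorithm~\ref{algo1} is the restriction of Algorithm~\ref{algo2} you describe. The one point where the paper is slightly more explicit than your sketch is the prime~$2$: when $2$ is inert in $F$ and ramified in $K/F$ but $\ord_{(2)}(\beta)\equiv 0\pmod 2$, the paper drops $2$ from the product and allows $u=t_K-2$ as well as $u=t_K-1$, so your search space must include both possibilities rather than only $u=t_K-1$.
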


We begin the proof with the following proposition.
\begin{prop}\label{construct.2}
If a cyclic quartic CM field $K$ satisfies $I_{0}(\Phi^r)=I_{K}$, then there exist prime numbers $p$, $s_1, \ldots, s_u$ $\in\ZZ$ such that $F=\QQ(\sqrt{p})$ with $p\not\equiv 3\ (\mod\ 4)$ and $s_i$ is inert in $F$ for all $i$, and we have $K^r\cong\QQ(\sqrt{-\epsilon s_1\cdots s_{u}\sqrt{p}})$ with $u\in\{t_K-1,\ t_K-2\}$ for every $\epsilon\in\OO^{\times}_{F}$ with $\epsilon\sqrt{p}\gg0$. 
\end{prop}

\begin{proof} By Proposition \ref{ramification1}, we have $F=\QQ(\sqrt{p})$, where $p$ is a prime with $p\not\equiv3\ (\mod\ 4)$. If there are $t_K$ ramified primes in $K/F$, the ones that are distinct from the one above~$p$ are inert in $F/\QQ$, by Proposition \ref{ramification1}; denote them by
$s_1, \ldots, s_{t_K-1}$.
	
There exists a totally positive element $\beta$ in ${F}^\times $ (without loss of generality, we can take $\beta$ in $\OO_{F}$) such that $K=F(\sqrt{-\beta})$, where $\beta$ is uniquely defined up to $({F}^\times )^2$. 
As in the proof of Proposition~\ref{construct.1} in the previous section, 
we define a totally positive element $\alpha\in F^\times$
with respect to the ramified 
primes in $K/F$ and show that $\alpha$ and~$\beta$ differ by a factor in $(F^\times )^2$. 

Let $\epsilon\in\OO^{\times}_{F}$ such that $\epsilon\sqrt{p}\gg0$. Such an element exists 
since $p\not\equiv 3\ (\mod\ 4)$. As~$\beta$ is unique up to squares and we can take $\lp$-
minimal $\beta'\in\beta(F^\times )^2$ for each prime $\lp$~of $\OO_F$,
we get
\begin{equation}\label{eq:ordercyclic} \ord_{\lp}((\beta))\equiv \begin{cases} 
1\ (\mod\ 2)&\quad \text{if $\lp$ is ramified in $K^r/F^r $ and $\lp\nmid 2$,}\\
0\ (\mod\ 2) &\quad \text{if $\lp$ is not ramified in $K^r/F^r $,}\\
0\ \text{or}\ 1\ (\mod\ 2)&\quad \text{if $\lp$ is ramified in $K^r/F^r $ and $\lp|2$.}
\end{cases} 
\end{equation}

If there is an $i< t_K$ with $s_i=2$, then without loss of
generality, take $s_{t_K-1} = 2$.
If $p\neq 2$ and the prime $(2)$ in $\OO_F$ is ramified in $K/F$ with 
$\ord_{(2)}((\beta))\equiv 0\ (\mod\ 2)$, then take 
$\alpha:= \epsilon s_1\cdots s_{u}\sqrt{p}$ 
with $u=t_K-2$. If $p = 2$ and $\ord_{(\sqrt{2})}((\beta))\equiv 0\ (\mod\ 2)$, 
then take $\alpha:= s_1\cdots s_{u}$ with $u=t_K-1$. For all other cases in (\ref{eq:ordercyclic}), 
take $\alpha:= \epsilon s_1\cdots s_{u}\sqrt{p}$ with $u=t_K-1$. 

By the definition of $\alpha$, for all ideals $\lp \subset \OO_F$ we have 
$\ord_{\lp}((\alpha/\beta))\equiv 0\ (\mod\ 2)$. So $(\alpha/\beta)=\aaa^2$ for a 
fractional $\OO_F$-ideal $\aaa$. The ideal $\aaa$ is a $2$-torsion element in~$\Cl_F$. 
Moreover, since $F=\QQ(\sqrt{p})$ with $p\not\equiv 3\ (\mod\ 4)$, genus theory implies that 
$\Cl_F=\Cl^{+}_F$ has odd order. Therefore, there is a totally positive element $\mu$ that 
generates $\aaa$. So $\alpha/\beta=\mu^2\cdot v$ for some $v\in \OO^{+}_F$. Furthermore, 
since $\Cl_F=\Cl^{+}_F$, the fundamental unit has a negative norm, and so 
$\OO^{+}_F=(\OO_F)^2$. Hence $v$ is a square in~$\OO_F$, and therefore we get 
$\alpha/\beta\in (F^{\times})^2$. 

In the case $p = 2$ and $\ord_{(\sqrt{2})}((\beta))\equiv 0\ (\mod\ 2)$, we get the 
biquadratic field $K=F(\sqrt{- s_1\cdots s_u})$ over $\QQ$, contradiction. Therefore, we have
\[K = \QQ(\sqrt{-\epsilon s_1\cdots s_{u}\sqrt{p}})\  
\text{with}\   u\in\{t_{K-1},\ t_{K-2}\}.\qedhere\]
\end{proof}

\begin{Algorithm}
\textbf{Output:} $[D,\ A,\ B]$ representations of all  cyclic quartic CM \mbox{fields $K$} satisfying  $I_{0}(\Phi^r)=I_{K}$.
\begin{itemize}
\item[Step 1.] Find all square-free integers less than $2.1\cdot10^5$
having at most $6$ prime divisors.
\item[Step 2.] Order the prime factors of each of these square-free integers as tuples of primes $(p,s_1,\ldots, s_{u})$ with $s_1<\cdots <s_u$ in $(u+1)$-ways, then take only the tuples satisfying $p\not\equiv3\ (\mod\ 4)$ and that $s_i$ is inert in $\QQ(\sqrt{p})$ for all $i$.
\item[Step 3.] For each $(p,s_1,\ldots, s_{u})$, let $F=\QQ(\sqrt{p})$ and take a totally positive element  $\alpha = \epsilon s_1 \cdots s_u\sqrt{p}$, where $\epsilon$ is a fundamental unit in $F$ such that  $\epsilon\sqrt{p}\gg0$. Construct $K=F(\sqrt{-\alpha})$. 
\item[Step 4.] Eliminate the fields $K$ that have totally split primes in $K$ below the bound ${\sqrt{d_{K}/d^{2}_F}}/{4}$. (In this step we eliminate most of the CM fields.)
\item[Step 5.] For each $\mathfrak{q}$ with norm $Q$ below the bound $12\log(|d_{K^r}|)^2$,  check whether it is in $I_{0}(\Phi^r)$ as follows. List all quartic Weil $Q$-polynomials, that is, monic integer polynomials of degree $4$ such that all roots in $\CC$ have absolute value~$\sqrt{Q}$. For each, take its roots in $K$ and check whether $\N_{\Phi^r}(\mathfrak{q})$ is generated by
such a root. If not, then $\mathfrak{q}$ is not in $I_0(\Phi^r)$, so we
throw away the field.
\item[Step 6.] For each $K$ compute the class group of the fields $K$ and test $I_{0}(\Phi^r)/P_{K}=I_{K}/P_{K}$.
\item[Step 7.] Find $[D,\ A,\ B]$ representations for the fields~$K$. 
\end{itemize}
\end{Algorithm}

\begin{proof} The idea of the proof of this algorithm is exactly as the proof of
Algorithm~\ref{algo2}. In this algorithm, Step 1 follows from Proposition \ref{lemrel}, Step 2 
and 3 follow from Proposition~\ref{construct.2}, and Step 4 follows from Lemma \ref{quick}.
\end{proof}

We implemented the algorithm in SAGE~\cite{sage, pari,streng} and obtained the list of the fields in Theorem~\ref{thm: cycliclist2}. The implementation is available online at \cite{kilicer_code}. This proves Theorems~\ref{thm: cycliclist} and \ref{thm: cycliclist2}. \qed

This computation is easily parallelized and took less than 4 core-weeks.

\bibliographystyle{plain}
\bibliography{mybib}

\Addresses
\end{document}